\documentclass[12pt,a4paper]{amsart}
\usepackage{amssymb}
\usepackage{amsfonts}
\usepackage{amsthm}
\usepackage{amsmath}
\usepackage{amscd}
\usepackage[utf8]{inputenc}
\usepackage{t1enc}
\usepackage[mathscr]{eucal}
\usepackage{indentfirst}
\usepackage{graphicx}
\usepackage{graphics}
\numberwithin{equation}{section}
\usepackage[margin=2.9cm]{geometry}
\usepackage{epstopdf} 
\usepackage{xcolor}

\usepackage{verbatim}
\usepackage{enumitem}
\usepackage{hyperref}

\theoremstyle{plain}
\newtheorem{theorem}{Theorem}[section]
\newtheorem{lemma}[theorem]{Lemma}
\newtheorem{corollary}[theorem]{Corollary}
\newtheorem{proposition}[theorem]{Proposition}
\newtheorem{condition}[theorem]{Condition}

\theoremstyle{definition}
\newtheorem{definition}[theorem]{Definition}

\newtheorem{remark}[theorem]{Remark}

\newtheorem{example}[theorem]{Example}
\newcommand{\supp}{\operatorname{supp}}

\newcommand{\R}{\mathbb{R}}

\newcommand{\De}{\Delta}

\newcommand{\Eb}{\mathbf{E}}
\newcommand{\E}{\mathbf{E}}

\newcommand{\Nb}{\mathbf{N}}

\newcommand{\Ht}{\mathrm{H}}
\newcommand{\htt}{\mathrm{h}}

\newcommand{\Bc}{\mathcal{B}}
\newcommand{\Mt}{\mathrm{M}}

\let\emptyset\varnothing

\font\sc=rsfs10 at 12pt
\newcommand{\Ls}{\sc\mbox{L}\hspace{1.0pt}}

\title[Negative eigenvalue estimates for measure potentials]{Negative eigenvalue estimates for polyharmonic Schr\"odinger operators 
with measure-potentials: the subcritical case}

\author[]{Medet Nursultanov}
\address {Department of Mathematics and Statistics, University of Helsinki, Finland, and Institute of Mathematics and Mathematical 
Modeling, Almaty, Kazakhstan}
\email{medet.nursultanov@gmail.com}

\author[]{Grigori Rozenblum}
\address{Chalmers University of Technology, School of Mathematics, Gothenburg, Sweden}
\email{grigori@chalmers.se}

\subjclass[2020]{35P15, 35J30; 47A75, 31C15, 81Q10, 35J10}

\keywords{polyharmonic Schr\"odinger operators, measure potentials, negative eigenvalues, Otelbaev function, Lieb-Thirring 
inequalities}
\begin{document}

\begin{abstract}
We study spectral estimates for polyharmonic Schr\"odinger operators $-\Delta^l-\mu$ in the subcritical regime \(2l<\Nb\).
The measure potential $\mu$ is assumed to satisfy a capacitary smallness condition
which guarantees that the corresponding operator is semibounded and self-adjoint. With such a measure $\mu$ we associate an
Otelbaev function, which reflects both the local concentration and the
spatial distribution of the potential. In terms of this function, we obtain two-sided estimates for the distribution function of 
the negative eigenvalues, and derive a sufficient condition and a necessary condition for the discreteness of the negative 
spectrum. As an application, we establish  two-sided  estimates of Lieb-Thirring-type, improving the classical ones.
\end{abstract}

\maketitle

\tableofcontents

\section{Introduction}
Spectral estimates for Schr{\"o}dinger operators and their generalizations form a traditional topic in Mathematical Physics and 
remain an active area of research; see, in particular, the recent  fundamental monograph \cite{FrankLaptevWeidl}. Best elaborated are 
phase 
volume estimates, such as the 
Cwikel-Lieb-Rozenblum and Lieb-Thirring ones which are sharp in large coupling parameters and in the quasi-classical behavior, being 
supported by asymptotic formulas. An extensive bibliography on this topic can be found in \cite{FrankLaptevWeidl}.

With all aesthetical value of phase volume estimates, there is an essential shortcoming. In fact, in all  results in this line, 
spectral 
characteristics of the operator are estimated  in terms of integral of the potential taken to some power. This means that for all 
equimeasurable potentials, the same estimate is declared;  thus, it follows that these estimates cannot reflect the spatial 
distribution of the potentials. 

In this paper we deal with spectral estimates that aim for taking into account this spatial distribution. We consider 
Schr{\"o}dinger-type 
operators in $L^2(\R^\Nb)$, formally acting as
\begin{equation}\label{Hmu}
  H u\equiv H_\mu u=(-\De)^lu-\mu u,
\end{equation}
with integer $l$, $2l<\Nb$, with a non-negative Radon measure $\mu$. The conditions imposed on $\mu$ guarantee that $H_{\mu}$ can 
be defined as a semi-bounded self-adjoint operator, using the quadratic form initially defined on $C_0^{\infty}(\R^{\Nb})$. 
Our approach is based upon introducing a certain averaging of the measure $\mu$. This idea was actually used rather long ago for 
$\mu$ being absolutely continuous with respect to the Lebesgue measure.  Probably, it was originally  implemented by M.Otelbaev in 
\cite{O76} in the one-dimensional case, see details in 
his book \cite{O90}, where more applications, involving also  higher order operators, were developed. This averaging 
procedure was further extended by M.Otelbaev to the multi-dimensional case, see \cite{O81}, \cite{O83}, and since then, the name 'the 
Otelbaev function' was being used. Independently, a close version of averaging  was developed for general elliptic equations by 
C.Fefferman, see \cite{Fefferman83}; after the extension by Z.Shen, see \cite{Shen96}, to magnetic operators, this averaged function 
is often used under the name Fefferman-Shen (FS) maximal function. Recently, a Fefferman-type approach was further developed in the higher-order setting in \cite{ZhaoTang2024}.

Compared with other traditional averaging methods, these averaged functions have variable size window adapted to the size of the 
'potential' and they  sleeken the potential of the Sturm-Liouville operator (or, the symbol of a more general elliptic operator). 
Thus,  eigenvalue estimates, expressed in these terms, are sharper than the phase volume ones.

Rather recently, for the Schr{\"o}dinger operator $-\De-V(x)$, a relation was found of FS maximal function with the \emph{landscape 
function}, the solution of the equation $(-\De-V(x))u(x) = Mu(x)$, rather widely used in analysis of quantum Hamiltonians, see 
\cite{DFM}, \cite{BFS2023}, \cite{BFS2024}. This approach enabled the authors of these papers to obtain new eigenvalue estimates for 
rather 
singular potentials belonging to the Kato class, however still corresponding to usual potentials, in other words, with measure $\mu$ 
in 
\eqref{Hmu} being absolutely continuous with respect to the Lebesgue measure. Moreover, this approach is  restricted to the classical 
Schr{\"o}dinger operator, since it is essentially based upon the maximum principle and other special features  of second order 
operators.  

Singular potentials have been the object of spectral studies since rather long ago. In particular, Schr{\"o}dinger and Dirac 
operators with singular interactions, namely, measures supported on points or surfaces, were suggested in the physics literature  
probably, since  \cite{Bethe},  see  an extensive literature cited in \cite{BFKLR} for further developments. For more general 
singular measures, supported on  general sets of Lebesgue measure zero the spectral theory was developed on the base of capacity 
considerations, see, especially, \cite{Mazya}.

In the present paper,  in Sect.2, after introducing main notations, we  describe which measures can act as potential for the 
poly-harmonic Schr{\"o}dinger operator and explain capacity considerations lying in the base of our constructions. Then, in Sect.3, 
we associate with a given measure $\mu$ and the polyharmonic operator 
$(-\Delta)^{l}$, the  Otelbaev function $q^*_{\mu,\tau}(x)$ depending on a positive parameter $\tau$. This function generalizes the 
one introduced for an absolutely continuous $\mu$ in \cite{O81}, \cite{O83} and the one studied in the one-dimensional case for 
arbitrary measures $\mu$ in \cite{FulscheNursultanovRozenblum2025}.  For each  $x\in \mathbb{R}^\Nb$, the value  $q^*_{\mu,\tau}(x)$ 
reflects both the size of $\mu$ in a  neighborhood of $x$ and the spreading of $\mu$ far away from $x$. We also establish here some 
important 
properties of the Otelbaev function, which are further  used for obtaining eigenvalue estimates.

Main results of the paper are presented in Sect.4. There, we establish upper and lower  estimates for $N(-\lambda;H_\mu)$, the number 
of points of the 
negative spectrum of the $l$-harmonic Schr\"odinger operator, lying below the reference level $-\lambda<0$, in the terms of the 
Otelbaev function. One can observe the relation of these estimates with the classical estimates for this quantity, following from the 
CLR bound. Namely, in the latter bound, with an absolutely continuous measure $\mu$, the quantity  $N(-\lambda;H_\mu)$ is 
majorated by the volume of the domain in the phase space $\mathbb{R}^{\Nb}\times \mathbb{R}^{\Nb}$ where the classical Hamiltonian is 
less than $-\lambda$. 
In our case, with, possibly, singular measure $\mu$, the Otelbaev function, inserted on the place of potential,  produces an 
effective Hamiltonian, and our result, Theorem \ref{t_est_distr_func_LO}, declares that $N(-\lambda;H_\mu)$ is bounded by a similar 
phase volume for the effective Hamiltonian (however, with a scaled reference level). As qualitative consequences of Theorem 
\ref{t_est_distr_func_LO} we find (separately) sufficient and necessary  
conditions for discreteness of the negative spectrum. 

We must note  that  we do not touch upon here the problem of the finiteness of the negative spectrum and the estimates of the number 
of negative eigenvalues $N(0;H_\mu)$. It is known that even in the classical setting, as soon as we go beyond the CLR type bound, 
these estimates are considerably more delicate 
than the ones for $-\lambda<0$, see \cite{BirmanSolomyak1991}, \cite{BirmanSolomyak1992}. Even in the one-dimensional case, as shown 
in \cite{FulscheNursultanovRozenblum2025}, the estimate via the phase space volume for the effective Hamiltonian for $\lambda=0$ is 
trivially void since this latter volume is infinite due to the insufficient decay rate of the Otelbaev function at infinity. An 
adaptation of the Otelbaev function approach to this question will be published on another occasion.

The study of the Lieb-Thirring estimates for Schr\"odinger-type operators,  i.e., estimates for the sum of powers of negative 
eigenvalues, is an important topic in spectral theory, see \cite{FrankLaptevWeidl} 
and numerous literature cited there.
Our Sect.5 is devoted to establishing such estimates for Schr\"odinger-type operators with measure-potential. We prove  upper
LT-type estimates,  again in terms of the \emph{effective} Hamiltonian involving the Otelbaev function; our approach produces also 
lower estimates in the same terms.

It is important to compare our results with the known estimates, as long as the latter ones are available. Even in the classical 
setting, for a function-potential,  the classical LT inequality follows from ours, (if one does not 
care for sharp values of the constant in the estimates). Moreover, this implication is sharp: we present examples where the effective 
phase space quantity is finite while the right-hand side in the classical LT estimate is infinite. For singular measures, LT-type 
estimates were established recently in
\cite{Rozenblum2022} (see also \cite{FrankLaptev}) for measures of the form
$\mu=V\nu$, where $\nu$ satisfies an upper  Ahlfors-type condition and $V$ belongs to
a suitable Lebesgue space with respect to $\nu$. In Sect.6,  we compare these estimates
with our Otelbaev-function bounds. Most interesting is the case when the measure $\mu$ is supported on a Lipschitz surface of 
codimension less than $2l$.  For Ahlfors regular measures
we show that the known surface Lieb--Thirring bounds majorize the corresponding
Otelbaev-function bounds. Thus, in this setting, the previous estimates follow
from our general result. We also give examples showing that the converse
implications  may fail in general.

Our considerations concern the so-called subcritical case, $2l<\Nb$. It is known that even in the classical setting,  the spectral 
analysis in the critical ($2l=\Nb$) and the supercritical ($2l>\Nb$) cases differ essentially from the subcritical setting. Here, 
although the general idea of the Otelbaev function persists, quite a lot of new circumstances arise. We plan to discuss these cases 
on a later occasion.

\section{Notations and setting} 
\subsection{Main notations}
Throughout the paper, $\Nb,l\in\mathbb N$ are fixed and \emph{always} satisfy $2l<\Nb$.
For nonnegative quantities $A$ and $B$, we write $A\lesssim B$ if
$A\le C B$ with a constant $C>0$ independent of the  parameters involved.
We write $A\gtrsim B$ if $B\lesssim A$, and $A\simeq B$ if both
$A\lesssim B$ and $A\gtrsim B$ hold.

By $\mathbb Q$ we denote the collection of all open cubes in $\mathbb R^\Nb$
whose edges are parallel to the coordinate axes chosen in a special way and then fixed; see  Proposition \ref{coor_syst}.
For $x\in\mathbb R^\Nb$ and $d>0$, $Q_d(x)\in\mathbb Q$ denotes the cube
centred at $x$ with edgelength $d$.
When the centre is irrelevant, we write simply $Q_d$.

For $Q\in\mathbb Q$ with edgelength $d>0$ and center $x$, and for $c>0$, 
we denote by $cQ$ the cube in $\mathbb Q$ with center $x$ and side length $cd$, that is,
\begin{equation*}
    cQ:=Q_{cd}(x).
\end{equation*}

For a multi-index $\alpha=(\alpha_1,\dots,\alpha_\Nb)$, we use the standard
notation $|\alpha|=\alpha_1+\cdots+\alpha_\Nb$ and $D^\alpha
=\partial_{x_1}^{\alpha_1}\cdots\partial_{x_\Nb}^{\alpha_\Nb}$.
For a function $u$ on $\mathbb R^\Nb$, we denote by
\[
\nabla^l u=\{D^\alpha u:\ |\alpha|=l\}
\]
the collection of all weak partial derivatives of order $l$, and set

\[
\|\nabla^l u\|_{L^2(\Omega)}
=
\left(
\sum_{|\alpha|=l}\binom{l}{\alpha}
\int_\Omega |D^\alpha u(x)|^2\,dx
\right)^{1/2},
\qquad
\binom{l}{\alpha}:=\frac{l!}{\alpha_1!\cdots\alpha_{\Nb}!}.
\]

For an open set $\Omega\subset\mathbb R^\Nb$, the \emph{Sobolev space} is defined as
\[
\Ht^l(\Omega)
=
\bigl\{
u:\ \|\nabla^l u\|_{L^2(\Omega)}+\|u\|_{L^2(\Omega)}<\infty
\bigr\}.
\]
The space $\htt^l(\mathbb R^\Nb)$ (the \emph{homogeneous Sobolev space}) is defined as the completion of
$C_0^\infty(\mathbb R^\Nb)$ with respect to the seminorm
$\|\nabla^l(\cdot)\|_{L^2(\mathbb R^\Nb)}$. Due to the Hardy inequality, $\htt^l(\mathbb R^\Nb)$ is a space of functions and the 
above seminorm is, in fact, a norm in this space.

\subsection{Capacities and measures} In our analysis, an essential role will be played by capacity considerations; we refer the 
reader to the book \cite{Mazya} for a 
detailed exposition. The main facts we use are the following.

For a compact set $\Eb\subset\mathbb R^\Nb$, the \emph{homogeneous Sobolev capacity}
$\operatorname{Cap}_l(\Eb)$ is defined by
\[
\operatorname{Cap}_l(\Eb)
=
\inf\bigl\{
\|\nabla^l u\|_{L^2(\mathbb R^\Nb)}^2:
u\in C_0^\infty(\mathbb R^\Nb),\ u\ge1 \text{ on } \Eb
\bigr\}.
\]
For an arbitrary set $\Eb\subset\mathbb R^\Nb$, the inner and outer capacities
are defined by
\begin{align*}
    &\underline{\operatorname{Cap}}_l(\Eb) = 
    \sup\{\operatorname{Cap}_l(K): K\subset \E,\ K \text{ compact}\},\\
    &\overline{\operatorname{Cap}}_l(\Eb) =
    \inf\{\operatorname{Cap}_l(G): \Eb\subset G,\ G \text{ open}\}.
\end{align*}
For  Borel sets, the inner and outer capacities
coincide, and we write simply $\operatorname{Cap}_l(\Eb)$.

We also use the Riesz potential of order $l$, defined for locally integrable
functions $f$ on $\mathbb R^\Nb$ by
\[
I_l f(x)
=
\int_{\mathbb R^\Nb} \frac{f(y)}{|x-y|^{\Nb-l}}\,\mathrm{d}y,
\qquad x\in\mathbb R^\Nb.
\]
The corresponding \emph{Riesz capacity} $\operatorname{Cap}_l^R(\Eb)$ is defined by
\[
\operatorname{Cap}_l^R(\Eb)
=
\inf\bigl\{
\|f\|_{L^2(\mathbb R^\Nb)}^2:
f\ge0,\ I_l f\ge1 \text{ on } \E
\bigr\}.
\]

\begin{remark}\label{rem:S_R_capacities}
    In our conditions, the homogeneous {Sobolev capacity } $\operatorname{Cap}_l$
    and the Riesz capacity $\operatorname{Cap}_l^R$ of Borel sets differ only by a
    multiplicative constant, that is,
    \begin{equation*}
        \operatorname{Cap}_l(\Eb)=C_{\Nb,l}\,\operatorname{Cap}_l^R(\Eb),
    \end{equation*}
    for some constant $C_{\Nb,l}>0$; see \cite[Section 10.4.1]{Mazya}.
\end{remark}

All measures considered in the paper are Radon measures on $\mathbb R^\Nb$.
For a $\mu$-measurable set $\Omega\subset\mathbb R^\Nb$, $\mu\arrowvert_\Omega$ denotes the
restriction of measure $\mu$ to $\Omega$. In some cases, the measure $\mu$ has the form $\mu=V\nu$, where $\nu$ is some fixed Radon 
measure and the density  $V$ is  variable, belonging to some Lebesgue space $L^p(\Omega,\nu)$ with respect to $\nu$, $1\le p\le\infty$ .

The following proposition collects important properties of the capacitary potential of a compact set that will be needed later.

\begin{proposition}\label{prop:cappot}
There exists a constant $C_0=C_0(\Nb,l)>0$ with the following property. For every
compact set $\Eb\subset\R^\Nb$ with $\operatorname{Cap}_l(\Eb)>0$ there exists
$u_\Eb\in\htt^l(\R^\Nb)$ such that
\begin{enumerate}
    \item\label{cp:one} $u_\Eb\ge 1$ $\operatorname{Cap}_l$-quasi-everywhere on $\Eb$;
      that is, $u_\Eb\ge 1$ on $\Eb\setminus e$ for some set $e$ with
      $\operatorname{Cap}_l(e)=0$;
  \item\label{cp:energy} $\displaystyle \|\nabla^l u_\Eb\|_{L^2(\R^\Nb)}^2\le C_0\,\operatorname{Cap}_l(\Eb)$;
  \item\label{cp:decay}  for every $x\notin\Eb$ and every $0\le j\le l$,
        \begin{equation}\label{eq:cp_decay}
           |\nabla^j u_\Eb(x)|\ \le\ C_0\,\operatorname{Cap}_l(\Eb)\,
           \operatorname{dist}(x,\Eb)^{\,2l-\Nb-j}.
        \end{equation}
    \item\label{cp:bdd} $0\le u_\Eb\le C_0$ on $\R^\Nb$.
\end{enumerate}
\end{proposition}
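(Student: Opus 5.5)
The plan is to build $u_\Eb$ as a Riesz potential of the capacitary measure, not as the Sobolev extremal. By Remark \ref{rem:S_R_capacities} we may work with the Riesz capacity. For the compact set $\Eb$, classical potential theory for the Riesz kernel $|x-y|^{2l-\Nb}$ (see \cite{Mazya}) gives an equilibrium measure $\gamma_\Eb\ge0$ supported on $\Eb$. We set
\[
u_\Eb(x)=c\int_{\Eb}|x-y|^{2l-\Nb}\,d\gamma_\Eb(y),
\]
with $c$ chosen so that $(-\Delta)^l u_\Eb=\gamma_\Eb$ in the sense of distributions, i.e. $c|x|^{2l-\Nb}$ is the fundamental solution of $(-\Delta)^l$. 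Since $2l<\Nb$, this kernel is positive and locally integrable.

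The required properties of the equilibrium measure, which I take from \cite{Mazya} (Chapter on Riesz capacities), are the following:
\begin{enumerate}
\item $\gamma_\Eb(\Eb)=\operatorname{Cap}(\Eb)$, with the normalization of the kernel capacity;
\item the potential is at least $1$ quasi-everywhere on $\Eb$;
\item the potential is at most $1$ on $\supp\gamma_\Eb$.
\end{enumerate}
By Frostman's maximum principle (with constant $2^{\Nb-2l}$ for Riesz kernels when $2l\ne2$, or an exact maximum principle for the Newtonian case), the bound on the support propagates to $u_\Eb\le C_0$ on all of $\R^\Nb$. Together with $u_\Eb\ge0$, this gives property \eqref{cp:bdd}.

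Property \eqref{cp:one} follows from the second property of the equilibrium measure. For \eqref{cp:energy}, I would integrate by parts:
\[
\|\nabla^l u_\Eb\|^2=\langle(-\Delta)^lu_\Eb,u_\Eb\rangle=\int u_\Eb\,d\gamma_\Eb\le C_0\,\gamma_\Eb(\Eb)\simeq\operatorname{Cap}_l(\Eb).
\]
To justify this, and to get $u_\Eb\in\htt^l$, note that finite-energy measures have potentials in $\htt^l$; I would use mollification of $\gamma_\Eb$ and pass to the limit. For \eqref{cp:decay}, fix $x\notin\Eb$ and differentiate under the integral. The kernel satisfies
\[
|\nabla_x^j|x-y|^{2l-\Nb}|\lesssim|x-y|^{2l-\Nb-j},
\]
and $|x-y|\ge\operatorname{dist}(x,\Eb)$ on $\supp\gamma_\Eb$, so
\[
|\nabla^ju_\Eb(x)|\lesssim\gamma_\Eb(\Eb)\,\operatorname{dist}(x,\Eb)^{2l-\Nb-j}.
\]

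The main obstacle is that the Riesz potential $I_l$ of order $l$ corresponds to the kernel of order $l$, whereas the energy here uses $u=I_{2l}\gamma$. I need to reconcile the two conventions: by the semigroup property $I_{2l}\gamma=I_l(I_l\gamma)$, with $f=I_l\gamma$ the extremal for $\operatorname{Cap}^R_l$. The existence of the extremal $f$ and the bounds $I_lf\ge1$ q.e. on $\Eb$ and $\le C$ everywhere come from the dual definition of capacity (the Maz'ya/Adams–Hedberg theory for $p=2$). Here the boundedness constant holds because for $p=2$ the nonlinear potential is linear and the Frostman-type maximum principle applies with a dimensional constant. I would cite Adams–Hedberg for this boundedness rather than reprove it.
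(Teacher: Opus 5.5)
Your proposal is correct and follows essentially the paper's route: the capacitary measure from Adams--Hedberg (Theorem 2.2.7), the potential $u_\Eb=I_l(I_l\nu_\Eb)=c\int|x-y|^{2l-\Nb}\,d\nu_\Eb(y)$ via the Riesz composition formula, decay by differentiating under the integral, and boundedness via the weak maximum principle with constant $2^{\Nb-2l}$, which the paper proves directly by the nearest-point argument. The only cosmetic difference is the energy bound: you integrate by parts to get $\int u_\Eb\,d\nu_\Eb\le\nu_\Eb(\Eb)$, whereas the paper uses that $\nabla^l I_l$ is, up to a constant, an isometry of $L^2$ together with $\|I_l\nu_\Eb\|_{L^2}^2=\operatorname{Cap}^R_l(\Eb)$; these amount to the same identity.
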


\begin{proof}
Let $\Eb\subset \mathbb{R}^\Nb$ be a compact set with $\operatorname{Cap}_l(\Eb) > 0$, in particular, due to 
Remark~\ref{rem:S_R_capacities}, $\operatorname{Cap}_l^R(\Eb) > 0$. By Theorem 2.2.7 in \cite{AdamsHedberg}, there exists a 
non-negative Radon measure $\nu_\Eb$ with $\operatorname{supp}\{\nu_\Eb\}\subset \Eb$ such that
\begin{equation}\label{eq:thm227}
    \nu_\Eb(\R^\Nb) = \operatorname{Cap}_l^R(\Eb) = \|I_l \nu_\Eb\|_{L^2(\R^\Nb)}^2
\end{equation}
and $u_\Eb:= I_l(I_l\nu_\Eb)$ satisfies 
\begin{equation*}
    u_\Eb \geq 1
    \qquad
    \operatorname{Cap}_l^R\text{-quasi-everywhere on }\Eb.
\end{equation*}
By Remark \ref{rem:S_R_capacities}, this gives \eqref{cp:one}.

Since $\nabla^l I_l$ is an isometry on $L^2(\mathbb{R}^\Nb)$, \eqref{eq:thm227} implies
\begin{equation*}
    \|\nabla^l u_\Eb\|_{L^2(\R^\Nb)}^2 = \|\nabla^l I_l(I_l\nu_\Eb)\|_{L^2(\R^\Nb)}^2  \simeq \|I_l\nu_\Eb\|_{L^2(\R^\Nb)}^2 = 
    \operatorname{Cap}_l^R(\Eb) \simeq \operatorname{Cap}_l(\Eb),
\end{equation*}
which is \eqref{cp:energy}; all  the involved constants  depend only on $\Nb$ and $l$.

For \eqref{cp:decay} we use the Riesz composition formula, see for example \cite[Chapter~5.1]{Stein},
\begin{equation*}
    \int_{\R^\Nb}|x-z|^{l-\Nb}|z-y|^{l-\Nb}\,\mathrm{d}z=c_{\Nb,l}\,|x-y|^{2l-\Nb},
\end{equation*}
which gives
\begin{equation}\label{eq:u_repr}
    u_\Eb(x)=c_{\Nb,l}\int_{\Eb}|x-y|^{2l-\Nb}\,\mathrm{d}\nu_\Eb(y).
\end{equation}
Then, for $x\notin\Eb$ with $r=\operatorname{dist}(x,\Eb)$, using $2l<\Nb$, we estimate
\begin{equation*}
    |\nabla^j u_\Eb(x)|
   =c_{\Nb,l}\Bigl|\int_{\Eb}\nabla_x^j|x-y|^{2l-\Nb}\,\mathrm{d}\nu_\Eb(y)\Bigr|
   \lesssim r^{\,2l-\Nb-j}\,\nu_\Eb(\R^\Nb)
   \lesssim \operatorname{Cap}_l(\Eb)\,r^{\,2l-\Nb-j}.
\end{equation*}
This gives \eqref{cp:decay}.

It remains to prove \eqref{cp:bdd}. Since $\nu_\Eb$ is a non-zero measure, the non-negativity of $u_\Eb$ follows from 
\eqref{eq:u_repr}.

For the upper bound, recall that Theorem 2.2.7 in \cite{AdamsHedberg} also
asserts that
\begin{equation}\label{eq:u_le_1_supp}
    u_\Eb\le 1 \qquad\text{on } \operatorname{supp}\nu_\Eb .
\end{equation}
Let now $x\in\R^\Nb$ be an arbitrary point. If
$x\in\operatorname{supp}\nu_\Eb$, then $u_\Eb(x)\le1$ by
\eqref{eq:u_le_1_supp}. Otherwise, we choose a nearest point
$x^*\in\operatorname{supp}\nu_\Eb$, so that
\begin{equation*}
    |x-x^*|=\operatorname{dist}(x,\operatorname{supp}\nu_\Eb)\le|x-y|
\end{equation*}
for every $y\in\operatorname{supp}\nu_\Eb$. Then, for all such $y$,
\[
|x^*-y|\ \le\ |x^*-x|+|x-y|\ \le\ 2\,|x-y|,
\]
and hence, since $2l-\Nb<0$,
\[
|x-y|^{2l-\Nb}\ \le\ 2^{\Nb-2l}\,|x^*-y|^{2l-\Nb}.
\]
Integrating this inequality against $d\nu_\Eb(y)$ in \eqref{eq:u_repr} and
using \eqref{eq:u_le_1_supp} at the point $x^*$, we obtain
\[
u_\Eb(x)\ \le\ 2^{\Nb-2l}\,u_\Eb(x^*)\ \le\ 2^{\Nb-2l}.
\]
This proves \eqref{cp:bdd}.
\end{proof}

For a Radon measure $\mu$ and a Borel set 
$\Eb\subset\mathbb{R}^\Nb$ with $\operatorname{Cap}_l(\Eb)>0$, we define
\begin{equation*}
    Z_\mu(\Eb) := \frac{\mu(\Eb)}{\operatorname{Cap}_l(\Eb)}.
\end{equation*}
This ratio, measuring the concentration of $\mu$ relative to capacity, plays a central role throughout the paper.

We consider the following condition which guarantees that the operator $H_{\mu}$, defined further on, is semibounded:

\begin{condition}\label{cond.cap}
    The measure $\mu$ satisfies
    \begin{equation}\label{2.1}
    \lim_{d \to 0}
    \sup_{x \in \mathbb{R}^\Nb}
    \sup_{\substack{\Eb \subset Q_d(x) \\ \operatorname{Cap}_l(\Eb)>0}}
    Z_\mu(\Eb)
    = 0.
\end{equation}
\end{condition}
Throughout the paper, this condition is always supposed to be fulfilled, unless the opposite is explicitly declared.   

masses.

The following lemma shows that Condition~\ref{cond.cap} forces the measure $\mu$ to be absolutely continuous with respect to the 
capacity
$\operatorname{Cap}_l$: sets of zero capacity carry no mass. In particular,
since the capacity of a single point is zero, $\mu$ possesses no point
masses.

\begin{lemma}\label{lem:mu_ll_cap}
Assume $\mu$ satisfies Condition~\ref{cond.cap}. If $K\subset\R^\Nb$ is a Borel
set with $\operatorname{Cap}_l(K)=0$, then $\mu(K)=0$.
\end{lemma}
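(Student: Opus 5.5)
The plan is to localize $K$ into small cubes and use Condition~\ref{cond.cap} in the form of an inequality $\mu(\Eb)\le \varepsilon\,\operatorname{Cap}_l(\Eb)$. This inequality has to hold for sets of \emph{positive} capacity inside small cubes. We then approximate $K$ from outside by open sets of small capacity, and read off $\mu(K)=0$.

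First, fix $\varepsilon>0$. By \eqref{2.1}, choose $d>0$ such that $\mu(\Eb)\le\varepsilon\operatorname{Cap}_l(\Eb)$ for every Borel $\Eb\subset Q_d(x)$ with $\operatorname{Cap}_l(\Eb)>0$, uniformly in $x$. Cover $\R^\Nb$ by a countable family of disjoint half-open cubes $P_k$ of edge $d/2$, so that each $P_k$ lies inside an open cube $Q_d(x_k)$. By countable additivity of $\mu$, it suffices to show $\mu(K_k)=0$ for $K_k:=K\cap P_k$; each $K_k$ is Borel with $\operatorname{Cap}_l(K_k)=0$ by monotonicity.

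Next, fix $k$ and $\delta>0$. Since $\operatorname{Cap}_l(K_k)=0$ equals the outer capacity, there is an open $G\supset K_k$ with $\operatorname{Cap}_l(G)<\delta$. Replace $G$ by $G':=G\cap Q_d(x_k)$. This set is open, contains $K_k$, and still satisfies $\operatorname{Cap}_l(G')<\delta$ by monotonicity. If $\operatorname{Cap}_l(G')>0$, Condition~\ref{cond.cap} gives $\mu(K_k)\le\mu(G')\le\varepsilon\,\operatorname{Cap}_l(G')<\varepsilon\delta$. The remaining case $\operatorname{Cap}_l(G')=0$ cannot occur when $K_k\neq\emptyset$: a nonempty open set contains a ball, and a ball has positive capacity. (If $K_k=\emptyset$ there is nothing to prove.) Letting $\delta\to0$ gives $\mu(K_k)=0$, and summing over $k$ gives $\mu(K)=0$. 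In fact $\varepsilon$ only needs to be finite here, but using the limit statement costs nothing.

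The main subtlety is that Condition~\ref{cond.cap} only controls sets of positive capacity, so it cannot be applied to $K$ directly. The detour through open neighbourhoods, which automatically have positive capacity, resolves this. The other point needing care is the identification of capacity with outer capacity for Borel sets, which the paper asserts in the discussion after the definition of capacities. We also need $\mu$ to be a Borel measure so that $\mu(K_k)\le\mu(G')$ makes sense; this is fine because $\mu$ is Radon.
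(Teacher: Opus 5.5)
Your proof is correct and follows the paper's argument essentially step by step: you localize to cubes on which $Z_\mu$ is bounded, take an open neighbourhood of small capacity, intersect it with the cube so that it becomes an admissible nonempty open set of positive capacity, and then let that capacity tend to zero. The differences are cosmetic: you partition into disjoint half-open cubes of edge $d/2$ and use countable additivity, where the paper uses an open cover by cubes of edge $d$ and subadditivity, and you approximate $K\cap P_k$ rather than $K$ itself from outside.
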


\begin{proof}
By Condition~\ref{cond.cap}, we may fix $d>0$ so small that
\[
M(d):=\sup_{x\in\mathbb R^\Nb}\ 
\sup_{\substack{\Eb\subset Q_d(x)\\ \operatorname{Cap}_l(\Eb)>0}}
Z_\mu(\Eb)\le 1 .
\]
Cover $\mathbb R^\Nb$ by countably many open cubes of edgelength $d$, say
$\{Q_j\}_{j\in\mathbb N}\subset\mathbb Q$, and denote by $x_j$ the center of
$Q_j$, so that $Q_j=Q_d(x_j)$. Since 
\begin{equation*}
    \mu(K)\le\sum_j \mu(K\cap Q_j),
\end{equation*}
it suffices to show
that $\mu(K\cap Q_j)=0$ for every $j$.

Fix $j$ and set $\Eb=K\cap Q_j$. If $\Eb=\varnothing$ there is nothing to
prove, so assume $\Eb\ne\varnothing$. Since $K$ is Borel with
$\operatorname{Cap}_l(K)=0$, its outer capacity vanishes:
$\overline{\operatorname{Cap}}_l(K)=0$. Hence, given $\varepsilon>0$, there is
an open set $G\supset K$ with $\operatorname{Cap}_l(G)<\varepsilon$. Put
\[
G_\varepsilon:=G\cap Q_j .
\]
Then $G_\varepsilon$ is open, $\Eb\subset G_\varepsilon\subset Q_j$, and by
monotonicity of the capacity
\[
\operatorname{Cap}_l(G_\varepsilon)\le\operatorname{Cap}_l(G)<\varepsilon .
\]
Moreover, $G_\varepsilon$ is a nonempty open set, so it contains a ball
$B_r$ of some radius $r>0$, and since $2l<\Nb$ we have
\begin{equation*}
    \operatorname{Cap}_l(G_\varepsilon)\ge\operatorname{Cap}_l(\overline{B_{r/2}})
\simeq r^{\Nb-2l}>0.
\end{equation*}
Thus $\operatorname{Cap}_l(G_\varepsilon)>0$, and
$G_\varepsilon$ is an admissible set in \eqref{2.1}, which yields
\[
\mu(\Eb)\le\mu(G_\varepsilon)
= Z_\mu(G_\varepsilon)\,\operatorname{Cap}_l(G_\varepsilon)
\le M(d)\,\varepsilon\le\varepsilon .
\]
Letting $\varepsilon\to0$ gives $\mu(\Eb)=\mu(K\cap Q_j)=0$, and summing over
$j$ we conclude $\mu(K)=0$.
\end{proof}

We recall the following standard Ahlfors-type conditions, which will be used
in the comparison with known estimates.

\begin{definition}[Ahlfors conditions]\label{def:Ahlfors_conditions}
Let $\nu$ be a locally finite Borel measure on $\mathbb R^\Nb$, and let
$\Sigma=\operatorname{supp}\nu$.

We say that $\nu$ satisfies the upper Ahlfors condition of order $s>0$ if
there exists a constant $c_\nu>0$ such that
\begin{equation}\label{upper_Ahlfors_condition}
    \nu(Q_d(x))\leq c_\nu d^s,
    \qquad x\in\Sigma,\quad d>0.
\end{equation}

We say that $\nu$ satisfies the lower Ahlfors condition of order $s>0$ if
there exists a constant $c'_\nu>0$ such that
\begin{equation}\label{lower_Ahlfors_condition}
    \nu(Q_d(x))\geq c'_\nu d^s,
    \qquad x\in\Sigma,\quad 0<d\le\operatorname{diam}\Sigma.
\end{equation}

If both conditions hold, then $\nu$ is called $s$-Ahlfors regular.
\end{definition}

We shall use the following two consequences of the Ahlfors conditions when
comparing with known Lieb-Thirring estimates.

\begin{lemma}\label{l: measure_upper_bound_cap}
    Assume that measure $\nu$ satisfies the upper Ahlfors condition \eqref{upper_Ahlfors_condition} with $s > \Nb -2l$ and 
    \begin{equation*}
        \vartheta=\frac{s}{2l-\Nb+s}.
    \end{equation*}
    Then there exists a constant $C_{\Nb,l,c_\nu}>0$ such that 
    \begin{equation*}
        \nu(\Eb)^{1 - \frac{1}{\vartheta}} \leq C_{\Nb,l,\mathcal{A}_\nu} \; \operatorname{Cap}_l(\Eb)
    \end{equation*}
    for every Borel set $\Eb\subset \mathbb{R}^\Nb$.
\end{lemma}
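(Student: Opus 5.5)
The plan is to use the dual (energy) characterization of capacity, tested against the restriction of $\nu$ to $\Eb$. Note first that $1-\frac1\vartheta=1-\frac{2l-\Nb+s}{s}=\frac{\Nb-2l}{s}>0$. So the claim is
\[
\nu(\Eb)^{(\Nb-2l)/s}\lesssim \operatorname{Cap}_l(\Eb).
\]
By Remark~\ref{rem:S_R_capacities} it suffices to prove this for $\operatorname{Cap}^R_l$.

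\textbf{Reduction to compact sets.} By inner regularity of the Radon measure $\nu$, and since the inner and outer capacities agree on Borel sets, it is enough to treat compact $\Eb$ with $0<\nu(\Eb)<\infty$. Set $\nu_\Eb:=\nu|_\Eb$.

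\textbf{Duality step.} Let $f\ge0$ with $I_lf\ge1$ on $\Eb$. Then, by Fubini,
\[
\nu(\Eb)\le\int I_lf\,d\nu_\Eb=\int f\,I_l\nu_\Eb\,dx\le\|f\|_{L^2}\,\|I_l\nu_\Eb\|_{L^2}.
\]
Taking the infimum over such $f$ gives
\[
\operatorname{Cap}^R_l(\Eb)\ge \frac{\nu(\Eb)^2}{\|I_l\nu_\Eb\|_{L^2}^2}.
\]
If the admissible class only requires $I_lf\ge1$ quasi-everywhere, one also needs that $\nu_\Eb$ charges no set of zero capacity. This follows from the finiteness of its energy, which is established in the next step, by the standard argument in \cite{AdamsHedberg}.

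\textbf{Energy estimate.} By the Riesz composition formula already used in the proof of Proposition~\ref{prop:cappot},
\[
\|I_l\nu_\Eb\|_{L^2}^2=c_{\Nb,l}\iint|x-y|^{2l-\Nb}\,d\nu_\Eb(x)\,d\nu_\Eb(y).
\]
Fix $x\in\supp\nu_\Eb\subset\Sigma$. Write the inner integral via the layer-cake formula:
\[
\int|x-y|^{2l-\Nb}\,d\nu_\Eb(y)=(\Nb-2l)\int_0^\infty \nu_\Eb(B_t(x))\,t^{2l-\Nb-1}\,dt .
\]
Now bound $\nu_\Eb(B_t(x))\le\min\bigl(\nu(\Eb),\,c\,t^s\bigr)$, using the upper Ahlfors condition \eqref{upper_Ahlfors_condition} with $B_t(x)\subset Q_{2t}(x)$. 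Split the $t$-integral at $t_0$ defined by $c\,t_0^s=\nu(\Eb)$:
\begin{itemize}
\item For $t<t_0$ the integrand is $\lesssim t^{s+2l-\Nb-1}$, which is integrable at $0$ because $s>\Nb-2l$.
\item For $t>t_0$ the integrand is $\nu(\Eb)\,t^{2l-\Nb-1}$, which is integrable at $\infty$ because $2l<\Nb$.
\end{itemize}
Both pieces are $\lesssim t_0^{s+2l-\Nb}\simeq\nu(\Eb)^{(s+2l-\Nb)/s}$. Integrating over $x$ against $d\nu_\Eb$ then gives
\[
\|I_l\nu_\Eb\|^2_{L^2}\lesssim\nu(\Eb)^{1+(s+2l-\Nb)/s}.
\]

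\textbf{Conclusion.} Inserting this into the duality bound yields
\[
\operatorname{Cap}_l(\Eb)\gtrsim\nu(\Eb)^{1-(s+2l-\Nb)/s}=\nu(\Eb)^{(\Nb-2l)/s},
\]
with constants depending only on $\Nb$, $l$ and $c_\nu$. This is the claim.

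The only delicate points are the quasi-everywhere issue in the duality step and the use of the Ahlfors bound only at points of $\Sigma$. The second is harmless, since both integrations run over $\supp\nu_\Eb\subset\Sigma$.
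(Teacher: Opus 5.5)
Your argument is correct, but it follows a different route from the paper's.

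\textbf{The paper's route.} The paper invokes D.~R.~Adams' trace theorem, $\|I_l f\|_{L^q(\nu)}\le C\|f\|_{L^2}$ with $q=2s/(\Nb-2l)>2$. Applied to an admissible $f$, this gives $\nu(\Eb)^{1/q}\le C\|f\|_{L^2}$ at once, and taking the infimum over $f$ finishes the proof.

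\textbf{Your route.} You use the dual lower bound $\operatorname{Cap}^R_l(\Eb)\ge \nu(\Eb)^2/\|I_l\nu_\Eb\|_{L^2}^2$ with the test measure $\nu_\Eb=\nu|_\Eb$. You then bound the Riesz energy of $\nu_\Eb$ directly from the upper Ahlfors condition, via the composition formula and a layer-cake split at $t_0\simeq\nu(\Eb)^{1/s}$. The exponents check out, and the reduction to compact sets and the restriction to points of $\Sigma$ are handled correctly. The quasi-everywhere caveat is not even needed, since the paper's definition of $\operatorname{Cap}^R_l$ requires $I_lf\ge1$ everywhere on $\Eb$.

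\textbf{Comparison.} The paper's proof is two lines, but it rests on a deep result: the sufficiency part of Adams' inequality in the range $q>2$. It also implicitly yields the full trace inequality, which is more than the lemma needs. Your proof establishes exactly the isocapacitary inequality required. It is self-contained, using only the Riesz composition formula already used in Proposition~\ref{prop:cappot}, Tonelli, and Cauchy--Schwarz. It also makes the dependence of the constant on $c_\nu$ and $s$ explicit, including its blow-up like $(s+2l-\Nb)^{-1}$ as $s\downarrow\Nb-2l$.
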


\begin{proof}
    Let $q=s(\Nb/2-l)^{-1}>2$. By the D.~R.~Adams Theorem on Riesz potentials, see Theorem 2 in \cite[Section 1.4.1]{Mazya},
\begin{equation*}
    \|I_l f\|_{L^q(\mathbb{R}^\Nb; \nu)} \leq C_{\Nb,l,c_\nu}\|f\|_{L^2(\mathbb{R}^\Nb)}
\end{equation*}
for all $f\in L^2(\mathbb{R}^\Nb)$. Let $\Eb\subset \mathbb{R}^\Nb$ be a Borel set and $f\in L^2(\mathbb{R}^\Nb)$ be a non-negative 
function such that $I_l f\geq 1$ on $\Eb$. Then we can estimate 
\begin{equation*}
    \nu(\Eb)^{\frac{1}{q}} \leq \|I_l f\|_{L^q(\Eb;\nu)} \leq \|I_l f\|_{L^q(\mathbb{R}^\Nb; \nu)} \leq 
    C_{\Nb,l,c_\nu}\|f\|_{L^2(\mathbb{R}^\Nb)}.
\end{equation*}
Taking the infimum over all admissible functions $f$ in the definition of $\operatorname{Cap}_l^R(\Eb)$ and squaring both sides gives 

\begin{equation*}
    \nu(\Eb)^{\frac{2}{q}} \leq  C_{\Nb,l,\mathcal{A}_\nu}^2 \operatorname{Cap}_l^R(\Eb).
\end{equation*}
Noting that 
\begin{equation*}
    \frac{2}{q} = 2\left( \frac{\Nb/2 - l}{s} \right) = 1 - \frac{1}{\vartheta}
\end{equation*}
and recalling that $\operatorname{Cap}_l$ and $\operatorname{Cap}_l^R$ are equivalent, see Remark \ref{rem:S_R_capacities}, we 
complete the proof.
\end{proof}

For a locally finite Borel measure $\nu$ on $\mathbb R^\Nb$, set
$\Sigma=\operatorname{supp}\nu$. For an arbitrary set $\Eb\subset\Sigma$ and
$r>0$, we denote

\begin{equation}\label{def:B_sets}
    \Bc_{\mathbb R^\Nb}(\Eb,r)
    =
    \{x\in\mathbb R^\Nb:\operatorname{dist}_\infty(x,\Eb)<r\},
    \qquad
    \Bc_\Sigma(\Eb,r)
    =
    \{y\in\Sigma:\operatorname{dist}_\infty(y,\Eb)<r\},
\end{equation}
where
\begin{equation*}
    \|x\|_\infty=\max_{1\leq j\leq \Nb}|x_j|,
    \qquad
    \operatorname{dist}_\infty(x,\Eb)
    =
    \inf\{\|x-y\|_\infty:\ y\in \Eb\}.
\end{equation*}

\begin{lemma}\label{lem:tube_estimate}
Suppose that $\nu$ satisfies the lower Ahlfors condition of order $s>0$,
that is, \eqref{lower_Ahlfors_condition} holds. Then, for every set $\Eb\subset\Sigma$ and every $r<\operatorname{diam}(\Sigma)$,
\begin{equation}\label{eq:tube_estimate}
    \bigl|\Bc_{\mathbb R^\Nb}(\Eb,r)\bigr|
    \leq
    \frac{4^\Nb}{c_\nu'}\,r^{\Nb-s}\,
    \nu\bigl(\Bc_\Sigma(\Eb,r)\bigr).
\end{equation}
\end{lemma}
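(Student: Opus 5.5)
This is a standard covering argument: pick a maximal $r$-separated family of points of $\Eb$ in the $\|\cdot\|_\infty$ metric, cover the tube by enlarged cubes around them, and pay for those cubes with the $\nu$-mass of disjoint small cubes via the lower Ahlfors condition.

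\textbf{Step 1: the separated family.} We may assume $\Eb\neq\varnothing$. By Zorn's lemma, or greedily since $\Eb\subset\mathbb R^\Nb$ is separable, choose a maximal family $\{y_k\}\subset\Eb$ with $\|y_k-y_m\|_\infty\ge r$ for $k\ne m$. The family is at most countable. By maximality, every $y\in\Eb$ satisfies $\|y-y_k\|_\infty<r$ for some $k$.

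\textbf{Step 2: covering the tube.} Let $x\in\Bc_{\mathbb R^\Nb}(\Eb,r)$. Pick $y\in\Eb$ with $\|x-y\|_\infty<r$, and then $y_k$ with $\|y-y_k\|_\infty<r$. This gives $\|x-y_k\|_\infty<2r$, that is, $x\in Q_{4r}(y_k)$. Hence
\[
\bigl|\Bc_{\mathbb R^\Nb}(\Eb,r)\bigr|\le\sum_k|Q_{4r}(y_k)|=4^\Nb r^\Nb\,\#\{k\}.
\]

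\textbf{Step 3: counting the points by $\nu$-mass.} The cubes $Q_r(y_k)$ are pairwise disjoint. Indeed, their centres are at $\ell^\infty$-distance at least $r$, while each cube has $\ell^\infty$-radius $r/2$ and is open. Each point $z\in Q_r(y_k)\cap\Sigma$ satisfies $\operatorname{dist}_\infty(z,\Eb)\le\|z-y_k\|_\infty<r/2<r$, so $Q_r(y_k)\cap\Sigma\subset\Bc_\Sigma(\Eb,r)$. Since $y_k\in\Eb\subset\Sigma$ and $r<\operatorname{diam}\Sigma$, the lower Ahlfors condition \eqref{lower_Ahlfors_condition} with $d=r$ gives $\nu(Q_r(y_k))\ge c'_\nu r^s$. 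Because $\nu$ is supported on $\Sigma$, disjointness then yields
\[
\#\{k\}\,c'_\nu r^s\le\sum_k\nu\bigl(Q_r(y_k)\cap\Sigma\bigr)\le\nu\bigl(\Bc_\Sigma(\Eb,r)\bigr).
\]

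\textbf{Step 4: combining.} Substituting the count from Step 3 into Step 2 gives exactly \eqref{eq:tube_estimate}, with constant $4^\Nb/c'_\nu$.

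\textbf{Main obstacle.} The only delicate point is the choice of scales. We need the disjoint cubes to have side $r$, so that $d=r$ lies within the allowed range $d\le\operatorname{diam}\Sigma$ of the lower Ahlfors condition. At the same time the covering cubes must have side $4r$, which is what produces the factor $4^\Nb$. Getting both requires using open cubes and the strict inequalities in the definitions of the tubes. If the family $\{y_k\}$ is infinite, then $\nu(\Bc_\Sigma(\Eb,r))=\infty$ and the inequality holds trivially.
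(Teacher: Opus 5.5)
Your proposal is correct and follows the paper's proof essentially step for step. Both arguments take a maximal $r$-separated family in $\Eb$ with respect to $\|\cdot\|_\infty$, bound the count from below through the lower Ahlfors condition applied to the pairwise disjoint cubes $Q_r(y_k)$, whose traces on $\Sigma$ lie in $\Bc_\Sigma(\Eb,r)$, and cover the tube by the cubes $Q_{4r}(y_k)$. The only cosmetic difference is that the paper sums over finite subfamilies and then takes a supremum, while you use countable additivity directly; both are fine.
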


\begin{proof}
If $\Eb=\emptyset$, there is nothing to prove. Otherwise, by Zorn's lemma, there exists a
maximal $r$-separated family $\{y_i\}_{i\in I}\subset \Eb$, that is,
\begin{equation*}
    \|y_i-y_j\|_\infty\geq r,
    \qquad i\neq j,
\end{equation*}
and the family cannot be enlarged inside $\Eb$ while preserving this property.
Since $\mathbb R^\Nb$ is separable, the index set $I$ is at most countable.

The cubes $Q_r(y_i)$ are pairwise disjoint. Indeed, if
$z\in Q_r(y_i)\cap Q_r(y_j)$, then
\begin{equation*}
    \|y_i-y_j\|_\infty
    \leq
    \|y_i-z\|_\infty+\|z-y_j\|_\infty
    <
    \frac{r}{2}+\frac{r}{2}
    =
    r,
\end{equation*}
which contradicts the $r$-separation.

Since $y_i\in\Sigma$, the lower Ahlfors condition gives
\begin{equation*}
    \nu(Q_r(y_i))\geq c_\nu' r^s.
\end{equation*}
Moreover, $\Sigma=\operatorname{supp}\nu$ and
\begin{equation*}
    Q_r(y_i)\cap\Sigma\subset \Bc_\Sigma(\Eb,r).
\end{equation*}
Therefore, by the disjointness of the cubes, for every finite set
$J\subset I$,
\begin{equation*}
    c_\nu r^s |J|
    \leq
    \sum_{i\in J}\nu(Q_r(y_i))
    =
    \sum_{i\in J}\nu(Q_r(y_i)\cap\Sigma)
    \leq
    \nu\bigl(\Bc_\Sigma(\Eb,r)\bigr),
\end{equation*}
where $|J|$ denotes the cardinality of $J$. Taking the supremum over all finite sets $J\subset I$, we obtain
\begin{equation}\label{eq:tube_count}
    \,c_\nu r^s|I|
    \leq
    \nu\bigl(\Bc_\Sigma(\Eb,r)\bigr).
\end{equation}

Let $x\in \Bc_{\mathbb R^\Nb}(\Eb,r)$.
Then there exists $z\in \Eb$ such that
\begin{equation*}
    \|x-z\|_\infty<r.
\end{equation*}
By maximality of the family $\{y_i\}_{i\in I}$, there exists $i\in I$ such
that
\begin{equation*}
    \|z-y_i\|_\infty<r.
\end{equation*}
Hence
\begin{equation*}
    \|x-y_i\|_\infty<2r,
\end{equation*}
and therefore $x\in Q_{4r}(y_i)$. Thus
\begin{equation*}
    \Bc_{\mathbb R^\Nb}(\Eb,r)
    \subset
    \bigcup_{i\in I} Q_{4r}(y_i).
\end{equation*}
Combining this inclusion with \eqref{eq:tube_count}, we get
\begin{equation*}
    \bigl|\Bc_{\mathbb R^\Nb}(\Eb,r)\bigr|
    \leq
    \,|Q_{4r}||I|
    =
    \,(4r)^\Nb|I|
    \leq
    \frac{4^\Nb}{c_\nu}\,r^{\Nb-s}\,
    \nu\bigl(\Bc_\Sigma(\Eb,r)\bigr).
\end{equation*}
This completes the proof.
\end{proof}

\subsection{Formulation of the problem}
In this paper, we consider operators acting formally as
\begin{equation*}
    (-\Delta)^l - \mu,
\end{equation*}
with $\mu$ playing the role of potential. With this expression we associate the Schr{\"o}dinger operator $H_{\mu}$ defined by means 
of the quadratic form
\begin{equation}\label{quad_form_LO}
    a_\mu[u] = \int_{\mathbb{R}^\Nb} |\nabla^l u|^2 \mathrm{d}x - \int_{\mathbb{R}^\Nb} |u|^2 \mathrm{d}\mu
\end{equation}
and $\mu$ will be called the measure-potential.
\begin{proposition}\label{Prop.form}
  Under Condition \ref{cond.cap}, the quadratic form  \eqref{quad_form_LO} is correctly 
  defined, semibounded and closable on  $\Ht^l(\R^\Nb)$ and thus defines a self-adjoint operator.
\end{proposition}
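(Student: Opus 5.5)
The plan is to show that the form $b_\mu[u]=\int|u|^2\,d\mu$ is form-bounded with respect to $\int|\nabla^l u|^2dx$ with relative bound as small as we like. That is, for every $\varepsilon>0$ there should be $C_\varepsilon$ with
\begin{equation*}
\int_{\R^\Nb}|u|^2\,d\mu\le \varepsilon\|\nabla^l u\|^2_{L^2(\R^\Nb)}+C_\varepsilon\|u\|^2_{L^2(\R^\Nb)},\qquad u\in C_0^\infty(\R^\Nb).
\end{equation*}
Given this, the KLMN theorem yields the conclusion. The form $a_\mu$, with $\varepsilon<1$, is bounded below and closed on $\Ht^l(\R^\Nb)$, since its form norm is equivalent to the $\Ht^l$ norm. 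It therefore defines a semibounded self-adjoint operator. Correct definition on $\Ht^l$ needs one further point, namely that the trace of $u$ with respect to $\mu$ makes sense. Elements of $\Ht^l$ have quasi-continuous representatives that are unique up to sets of zero $\operatorname{Cap}_l$. By Lemma~\ref{lem:mu_ll_cap}, such sets are $\mu$-null, so $\int|u|^2d\mu$ is independent of the representative. The inequality then extends from $C_0^\infty$ by density, using Fatou's lemma or the $\mu$-a.e.\ convergence of quasi-continuous representatives along a subsequence.

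To prove the inequality, first fix $d$ by Condition~\ref{cond.cap} so that $Z_\mu(\Eb)\le\delta$ for all $\Eb\subset Q_{2d}(x)$ and all $x$, where $\delta$ is to be chosen. Tile $\R^\Nb$ by a grid of cubes $Q_j$ of side $d$, and take cutoffs $\eta_j\in C_0^\infty(2Q_j)$ with $\sum_j\eta_j^2\ge1$, bounded overlap, and $|\nabla^k\eta_j|\lesssim d^{-k}$. Then
\begin{equation*}
\int|u|^2d\mu\le\sum_j\int|\eta_j u|^2d\mu\le\sum_j \int |\eta_j u|^2\,d\mu|_{2Q_j}.
\end{equation*}
Next apply Maz'ya's capacitary criterion (strong-type capacitary inequality, \cite{Mazya}) to each function $v_j=\eta_ju\in C_0^\infty(2Q_j)$ and the measure $\mu|_{2Q_j}$. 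Since every compact $\Eb$ satisfies $\mu|_{2Q_j}(\Eb)=\mu(\Eb\cap 2Q_j)\le\delta\operatorname{Cap}_l(\Eb\cap 2Q_j)\le\delta\operatorname{Cap}_l(\Eb)$, the criterion gives
\begin{equation*}
\int|v_j|^2d\mu\le C\delta\,\|\nabla^l v_j\|^2_{L^2},
\end{equation*}
with $C=C(\Nb,l)$. This uses the homogeneous capacity, which is legitimate because $2l<\Nb$. Expanding $\nabla^l(\eta_ju)$ by Leibniz produces lower-order terms $d^{k-l}\nabla^k u$ for $k<l$. Summing over $j$ with bounded overlap and applying the interpolation inequality $\|\nabla^k u\|^2\le \|\nabla^l u\|^2+C_{d}\|u\|^2$ on $\R^\Nb$, we arrive at
\begin{equation*}
\int|u|^2d\mu\le C\delta\big(\|\nabla^lu\|^2+C'_d\|u\|^2\big).
\end{equation*}
Choosing $\delta=\varepsilon/C$ and then $d$ accordingly gives the desired inequality.

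The main obstacle is the capacitary inequality. If the precise form in \cite{Mazya} has to be justified rather than quoted, I would derive it from the capacitary strong-type inequality $\int_0^\infty\operatorname{Cap}_l(\{|v|\ge t\})\,d(t^2)\le C\|\nabla^l v\|^2$, valid for $l\ge1$ via Adams--Hedberg and Remark~\ref{rem:S_R_capacities}. Writing $\int|v|^2d\mu=\int_0^\infty\mu(\{|v|\ge t\})\,d(t^2)$ and bounding $\mu$ by $\delta\operatorname{Cap}_l$ on the level sets, which are compact and lie in $2Q_j$, gives the claim at once. The remaining steps are routine.
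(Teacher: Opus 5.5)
Your proof is correct, but it localizes differently from the paper. The paper tiles $\R^\Nb$ by disjoint grid cubes $Q_{\mathbf k}$ of side $d$. It uses the special coordinate system of Proposition~\ref{coor_syst}, so that $\mu(\partial Q)=0$ and hence $\int|u|^2\,\mathrm{d}\mu=\sum_{\mathbf k}\int_{Q_{\mathbf k}}|u|^2\,\mathrm{d}\mu$ exactly. On each cube it then applies the local estimate \eqref{Lem_2.5}, which comes from \eqref{Maz_N} via a Stein extension operator $\Ht^l(Q)\to\Ht^l(\R^\Nb)$ on the unit cube followed by rescaling. Summing gives
\[
\int|u|^2\,\mathrm{d}\mu\le\varpi\sup_{\mathbf k}\sup_{\Eb\subset Q_{\mathbf k}} Z_\mu(\Eb)\bigl(\|\nabla^l u\|^2+d^{-2l}\|u\|^2\bigr),
\]
and KLMN finishes the proof.

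You instead use smooth overlapping cutoffs $\eta_j$, apply \eqref{Maz_N} (or the strong capacitary inequality) to the compactly supported functions $\eta_j u$, and absorb the Leibniz terms by interpolation. Your route avoids both the extension operator and the need for $\mu$-null cube boundaries. You also justify explicitly why $\int|u|^2\,\mathrm{d}\mu$ is well defined on $\Ht^l$, via quasi-continuous representatives and Lemma~\ref{lem:mu_ll_cap}; the paper only says ``extended by continuity''. The paper's route, in turn, yields an estimate for arbitrary $u\in\Ht^l(Q_d)$ that need not vanish on $\partial Q_d$. That estimate is reused for the Neumann forms in Proposition~\ref{Prop.form.loc} and in the upper bound of Theorem~\ref{t_est_distr_func_LO}, and your cutoff argument does not give it.

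One step needs to be stated in the right form. The Leibniz terms carry factors $d^{2(k-l)}$, so you need the scale-invariant interpolation inequality
\[
d^{2(k-l)}\|\nabla^k u\|^2\le\|\nabla^l u\|^2+d^{-2l}\|u\|^2 .
\]
It follows from $d^{2(k-l)}|\xi|^{2k}\le|\xi|^{2l}+d^{-2l}$ (Young's inequality on the Fourier side). The version you wrote, $\|\nabla^k u\|^2\le\|\nabla^l u\|^2+C_d\|u\|^2$, is not enough. With it, the coefficient of $\|\nabla^l u\|^2$ would be $C\delta\, d^{-2(l-k)}$, and since $d$ is chosen after $\delta$, the smallness of the relative bound would be lost. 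With the scaled inequality, the constant $C$ in your final bound $C\delta(\|\nabla^l u\|^2+C'_d\|u\|^2)$ is indeed independent of $d$, and the argument goes through.
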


The proof goes in the standard way in several steps.  Namely $a_\mu[u]$ is considered first on functions $u\in 
C_0^{\infty}(\R^{\Nb})$ and then 
extended by continuity to the whole $\Ht^l(\R^\Nb)$. By \cite[Theorem 11.3]{Mazya},  
 \begin{equation}\label{Maz_N}
        \int_{\mathbb{R}^\Nb} |u|^2 \mathrm{d}\mu \leq C_{\Nb,l} \sup_{\Eb\subset \mathbb{R}^\Nb} Z_\mu(\Eb)  \|\nabla^l u 
        \|_{L^2(\mathbb{R}^\Nb)}^2
    \end{equation}
    for  $u\in C_0^{\infty}(\R^{\Nb})$ with some constant $C_{\Nb,l}>0$ not depending on $u$ and $\mu$;  by continuity, \eqref{Maz_N} 
    extends to $u\in\htt^l(\R^{\Nb})$.

A consequence of \eqref{Maz_N} is the following local estimate:
\begin{lemma}[{\cite[Theorem 11.3]{Mazya}}]\label{Mazya_thm_11.3}
   For a \emph{unit} cube $Q\subset \mathbb{R}^\Nb$, the inequality holds
    \begin{equation*}
        \int_{Q} |u|^2 \mathrm{d}\mu \leq C_{\Nb,l} \sup_{\Eb\subset Q} Z_\mu(\Eb)  \left( \|\nabla^l u \|_{L^2(Q)}^2 + 
        \|u\|_{L^2(Q)}^2 \right)
    \end{equation*}
for all $u\in \Ht^l(Q)$ and for some constant $C_{\Nb,l}>0$ not depending on $u$ and $\mu$. 
\end{lemma}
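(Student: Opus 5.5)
The plan is to localize the global inequality \eqref{Maz_N} by means of a bounded extension operator. The first step is to fix a Stein-type (or Calderón) extension operator
\[
\mathcal E:\Ht^l(Q)\to\Ht^l(\R^\Nb),\qquad (\mathcal E u)\arrowvert_Q=u,
\]
with
\[
\|\mathcal E u\|_{\Ht^l(\R^\Nb)}\lesssim \|\nabla^l u\|_{L^2(Q)}+\|u\|_{L^2(Q)}.
\]
For the unit cube, the constant depends only on $\Nb$ and $l$, and it is the same for all unit cubes by translation invariance. Next we multiply by a cutoff $\chi\in C_0^\infty(2Q)$ with $\chi=1$ on $Q$, and set $v=\chi\,\mathcal E u$. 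By Leibniz' rule and the standard interpolation inequality for intermediate derivatives, we get
\[
\|\nabla^l v\|_{L^2(\R^\Nb)}^2\lesssim \|\nabla^l u\|_{L^2(Q)}^2+\|u\|_{L^2(Q)}^2 .
\]
Since $v$ has compact support, $v\in\htt^l(\R^\Nb)$.

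The second step is to apply \eqref{Maz_N} not to $\mu$ itself but to the restricted measure $\mu\arrowvert_Q$. This gives
\[
\int_Q|u|^2\,d\mu=\int_{\R^\Nb}|v|^2\,d(\mu\arrowvert_Q)\le C_{\Nb,l}\sup_{\Eb\subset\R^\Nb}Z_{\mu\arrowvert_Q}(\Eb)\,\|\nabla^l v\|^2_{L^2(\R^\Nb)} .
\]
Two remarks justify this. First, the left-hand side is well defined because $v$ coincides with $u$ on $Q$ and $\mu$ does not charge sets of zero capacity (Lemma \ref{lem:mu_ll_cap}), so the quasi-continuous representatives agree $\mu$-a.e. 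Second, one applies \eqref{Maz_N} first to smooth approximations of $v$ and then passes to the limit, using the fact that convergence in $\htt^l$ implies quasi-everywhere convergence along a subsequence.

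The remaining step, and the one requiring some care, is to bound the supremum for $\mu\arrowvert_Q$ by the supremum over subsets of $Q$ only. For any Borel set $\Eb$ we have $\mu\arrowvert_Q(\Eb)=\mu(\Eb\cap Q)$. By monotonicity of capacity, $\operatorname{Cap}_l(\Eb\cap Q)\le\operatorname{Cap}_l(\Eb)$. Hence
\[
Z_{\mu\arrowvert_Q}(\Eb)\le Z_\mu(\Eb\cap Q)
\]
whenever $\operatorname{Cap}_l(\Eb\cap Q)>0$. If instead $\operatorname{Cap}_l(\Eb\cap Q)=0$, then $\mu(\Eb\cap Q)=0$ by Lemma \ref{lem:mu_ll_cap}, and such sets contribute nothing. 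Therefore
\[
\sup_{\Eb\subset \R^\Nb}Z_{\mu\arrowvert_Q}(\Eb)\le \sup_{\Eb\subset Q}Z_\mu(\Eb).
\]
Combining this with the energy bound for $v$ gives the lemma. The main obstacle I expect is verifying that the extension constants are uniform and that the trace of $u\in\Ht^l(Q)$ on $\supp\mu\cap\overline Q$ makes sense, in particular how boundary mass on $\partial Q$ is treated. I would handle this by interpreting $\int_Q$ over the open cube, or else by noting that the bound for the closed cube follows by the same argument applied to the set $\overline Q$.
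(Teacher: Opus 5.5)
Your proposal is correct and follows the paper's proof: extend $u$ by a Stein extension operator and apply \eqref{Maz_N} to the restricted measure $\mu\arrowvert_Q$. The cutoff $\chi$ is unnecessary, since the paper uses $\Ht^l(\R^\Nb)\hookrightarrow\htt^l(\R^\Nb)$ for $2l<\Nb$. Your explicit comparison $\sup_{\Eb}Z_{\mu\arrowvert_Q}(\Eb)\le\sup_{\Eb\subset Q}Z_\mu(\Eb)$, via monotonicity of capacity and Lemma~\ref{lem:mu_ll_cap}, supplies a step the paper leaves implicit.
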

We give a simple proof
\begin{proof}
 By the Stein extension theorem \cite[Chapter~6]{Stein} (see also the precise formulation in Theorem~5.24 of \cite{AdamsFournier}), 
 there exists a bounded  extension operator $ \Ht^l(Q)\ni u\mapsto \Tilde{u}\in \Ht^l(\R^{\Nb})$ of the function $u$ to 
 $\mathbb{R}^\Nb$ such that 
    \begin{equation*}
        \|\Tilde{u}\|_{\Ht^l(\mathbb{R}^\Nb)} \leq C_{\Nb,l} \|u\|_{\Ht^l(Q)}.
    \end{equation*}
    Since $2l<\Nb$, we have $\Ht^l(\R^\Nb)\hookrightarrow\htt^l(\R^\Nb)$, so in
    particular $\Tilde{u}\in\htt^l(\R^\Nb)$. Applying  inequality \eqref{Maz_N} to the restricted measure
    $\mu\arrowvert_{Q}$ and the extension $\Tilde{u}$, we obtain
    \begin{align*}
        \int_{Q} |u|^2 \mathrm{d}\mu = \int_{Q} |\Tilde{u}|^2 \mathrm{d}\mu\arrowvert_{Q} & \leq C_{\Nb,l} \sup_{\Eb\subset Q} 
        Z_\mu(\Eb)   \|\nabla^l 
        \Tilde{u} \|_{L^2(\mathbb{R}^\Nb)}^2 \\
        &\leq  C_{\Nb,l} \sup_{\Eb\subset Q} Z_\mu(\Eb) \left( \|\nabla^l u \|_{L^2(Q)}^2 + \|u\|_{L^2(Q)}^2 \right).
    \end{align*}
    This completes the proof.
\end{proof}

Next, we rescale the above inequality to an arbitrary cube of sidelength $d > 0$ and locate the dependence of the constants in the 
estimate on $d$:

\begin{lemma}\label{lem:local_estimate_scaled}
    Let $\mu$ be a Radon measure on $\mathbb R^\Nb$. Let $Q_d\subset\mathbb R^\Nb$
    be a cube of sidelength $d>0$. Then
    \begin{equation}\label{Lem_2.5}
        \int_{Q_d} |u|^2\,\mathrm{d}\mu
        \leq
        \varpi
        \sup_{\substack{\Eb\subset Q_d}}
        Z_\mu(\Eb)
        \left(
            \|\nabla^l u\|_{L^2(Q_d)}^2
            +
            d^{-2l}\|u\|_{L^2(Q_d)}^2
        \right)
    \end{equation}
    for all $u\in \Ht^l(Q_d)$, where $\varpi>0$ depends only on
    $\Nb$ and $l$.
\end{lemma}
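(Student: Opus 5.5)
The natural route is a scaling argument that reduces \eqref{Lem_2.5} to the unit-cube estimate of Lemma~\ref{Mazya_thm_11.3}. Let $Q_d=Q_d(x_0)$ and define the dilation $T(y)=x_0+d\,y$, which maps the unit cube $Q=Q_1(0)$ onto $Q_d$. For $u\in\Ht^l(Q_d)$ put $v(y)=u(T y)$, so that $v\in\Ht^l(Q)$. Introduce also the pulled-back measure $\tilde\mu:=d^{2l-\Nb}\,T^{*}\mu$, i.e.\ $\tilde\mu(F)=d^{2l-\Nb}\mu(T F)$ for Borel $F\subset\R^\Nb$. The normalizing factor is chosen so that the ratio $Z$ becomes scale invariant.

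The key steps are the following.

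\emph{Step 1: scaling of the capacity.} We show that $\operatorname{Cap}_l(TF)=d^{\Nb-2l}\operatorname{Cap}_l(F)$. For compact $F$ this follows by substituting $w(y)=\phi(Ty)$ into the definition: translations are harmless, and $\|\nabla^l w\|^2_{L^2}=d^{2l-\Nb}\|\nabla^l\phi\|^2_{L^2}$. The identity then passes to Borel sets through the inner and outer capacities. Consequently $Z_{\tilde\mu}(F)=Z_\mu(TF)$, and therefore
\[
\sup_{F\subset Q}Z_{\tilde\mu}(F)=\sup_{\Eb\subset Q_d}Z_\mu(\Eb).
\]

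\emph{Step 2: change of variables in the three integrals.} Directly,
\[
\int_{Q}|v|^2\,\mathrm d\tilde\mu=d^{2l-\Nb}\int_{Q_d}|u|^2\,\mathrm d\mu .
\]
Since $D^\alpha v=d^{l}(D^\alpha u)\circ T$ for $|\alpha|=l$, we get $\|\nabla^l v\|^2_{L^2(Q)}=d^{2l-\Nb}\|\nabla^l u\|^2_{L^2(Q_d)}$. Finally, $\|v\|^2_{L^2(Q)}=d^{-\Nb}\|u\|^2_{L^2(Q_d)}$.

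\emph{Step 3: apply the unit-cube estimate and rescale.} Apply Lemma~\ref{Mazya_thm_11.3} to $\tilde\mu$ and $v$ on the unit cube $Q$. Its constant does not depend on the measure, so it is legitimate to use it for $\tilde\mu$. Substituting the identities from Step 2 gives
\[
d^{2l-\Nb}\int_{Q_d}|u|^2\,\mathrm d\mu\le C_{\Nb,l}\sup_{\Eb\subset Q_d}Z_\mu(\Eb)\Bigl(d^{2l-\Nb}\|\nabla^l u\|^2_{L^2(Q_d)}+d^{-\Nb}\|u\|^2_{L^2(Q_d)}\Bigr).
\]
Multiplying through by $d^{\Nb-2l}$ yields \eqref{Lem_2.5} with $\varpi=C_{\Nb,l}$.

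\emph{Expected obstacle.} The only real point to check is Step 1, specifically that the capacity scaling identity holds for arbitrary Borel $\Eb$ and is not just valid for compact sets. Dilations are homeomorphisms, so they map compact sets to compact sets and open sets to open sets; hence the inner and outer capacities scale with the same factor, which settles it. A secondary detail is that Lemma~\ref{Mazya_thm_11.3} is stated for a unit cube, and $Q$ here is centred at the origin, so that lemma applies as stated.
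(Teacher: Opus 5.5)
Your proof is correct and follows essentially the same route as the paper: dilate $Q_d$ onto the unit cube, apply Lemma~\ref{Mazya_thm_11.3} to the pulled-back measure, and rescale the three integrals. The only difference is cosmetic. You normalize the pulled-back measure by $d^{2l-\Nb}$ so that $Z$ is scale-invariant, whereas the paper uses the unnormalized $\widetilde\mu(A)=\mu(dA)$ and carries the factor through $Z_{\widetilde\mu}(A)=d^{\Nb-2l}Z_\mu(dA)$.
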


\begin{proof}
    By translation, it is enough to prove the estimate for
    $Q_d=(0,d)^\Nb$. Let $Q_1=(0,1)^\Nb$ and set
    \begin{equation*}
        v(y)=u(dy), \qquad y\in Q_1.
    \end{equation*}
    Define the rescaled measure $\widetilde\mu$ on $Q_1$ by
    \begin{equation*}
        \widetilde\mu(A)=\mu(dA),
        \qquad A\subset Q_1.
    \end{equation*}
    Then
    \begin{equation*}
        \int_{Q_d}|u|^2\,\mathrm{d}\mu
        =
        \int_{Q_1}|v|^2\,\mathrm{d}\widetilde\mu.
    \end{equation*}

    Applying Lemma~\ref{Mazya_thm_11.3} on $Q_1$, we obtain
    \begin{equation*}
        \int_{Q_1}|v|^2\,\mathrm{d}\widetilde\mu
        \leq
        C_{\Nb,l}
        \sup_{\substack{A\subset Q_1}}
        Z_{\widetilde\mu}(A)
        \left(
            \|\nabla^l v\|_{L^2(Q_1)}^2
            +
            \|v\|_{L^2(Q_1)}^2
        \right).
    \end{equation*}

    The change of variables $x=dy$ gives
    \begin{equation*}
        \|v\|_{L^2(Q_1)}^2
        =
        d^{-\Nb}\|u\|_{L^2(Q_d)}^2
    \end{equation*}
    and
    \begin{equation*}
        \|\nabla^l v\|_{L^2(Q_1)}^2
        =
        d^{2l-\Nb}\|\nabla^l u\|_{L^2(Q_d)}^2.
    \end{equation*}

    If $A\subset Q_1$ and $\Eb=dA\subset Q_d$, then $\widetilde\mu(A)=\mu(\Eb)$. Moreover, by the homogeneity of the Sobolev 
    capacity,
    \begin{equation*}
        \operatorname{Cap}_l(\Eb)
        =
        d^{\Nb-2l}\operatorname{Cap}_l(A).
    \end{equation*}
    Hence
    \begin{equation*}
        Z_{\widetilde\mu}(A)
        =
        \frac{\widetilde\mu(A)}{\operatorname{Cap}_l(A)}
        =
        d^{\Nb-2l}
        \frac{\mu(\Eb)}{\operatorname{Cap}_l(\Eb)}
        =
        d^{\Nb-2l}Z_\mu(\Eb).
    \end{equation*}
    Therefore,
    \begin{equation*}
        \sup_{\substack{A\subset Q_1}}
        Z_{\widetilde\mu}(A)
        =
        d^{\Nb-2l}
        \sup_{\substack{\Eb\subset Q_d}}
        Z_\mu(\Eb).
    \end{equation*}

    Combining these identities, we get
    \begin{align*}
        \int_{Q_d}|u|^2\,\mathrm{d}\mu
        &\leq
        C_{\Nb,l} d^{\Nb-2l}
        \sup_{\substack{\Eb\subset Q_d}}
        Z_\mu(\Eb)
        \left(
            d^{2l-\Nb}\|\nabla^l u\|_{L^2(Q_d)}^2
            +
            d^{-\Nb}\|u\|_{L^2(Q_d)}^2
        \right) \\
        &=
        C_{\Nb,l}
        \sup_{\substack{\Eb\subset Q_d}}
        Z_\mu(\Eb)
        \left(
            \|\nabla^l u\|_{L^2(Q_d)}^2
            +
            d^{-2l}\|u\|_{L^2(Q_d)}^2
        \right).
    \end{align*}
    This proves the lemma.
\end{proof}

Next, we recall the following result:

\begin{proposition}[\cite{RozenblumShargorodsky}, Theorem 4.3]\label{coor_syst}
    Let $\mu$ be a Radon measure on $\mathbb{R}^\Nb$ without point masses (this holds for sure if Condition \ref{cond.cap} is met). 
    Then there exists an orthogonal coordinate system with the following property: if $\mathbb{Q}$ denotes the collection of all open 
    cubes in $\mathbb{R}^\Nb$ whose edges are parallel to the coordinate axes, then
    \begin{equation*}
        \mu(\partial Q) = 0 \qquad \text{for all } Q \in \mathbb{Q}.
    \end{equation*}
    In particular, for any $x \in \mathbb{R}^\Nb$, the function $g(d) = \mu(Q_d(x))$ is continuous in $d \ge 0$.
\end{proposition}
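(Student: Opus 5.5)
The approach is a measure-theoretic counting argument over all coordinate frames. We show that for all but a set of measure zero of rotations $R\in SO(\Nb)$, every axis-parallel cube in the rotated frame has $\mu$-null boundary. The boundary of any cube $Q\in\mathbb Q$ lies in the union of $2\Nb$ hyperplanes, each orthogonal to one of the coordinate vectors $e_1,\dots,e_\Nb$ of the frame. It therefore suffices to find an orthonormal frame $(e_1,\dots,e_\Nb)$ such that, for every $j$ and every $t\in\R$, the hyperplane $\{x:\langle x,e_j\rangle=t\}$ has $\mu$-measure zero.

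Fix a unit vector $e$ and consider the set $T(e)=\{t\in\R:\mu(\{\langle x,e\rangle=t\})>0\}$. Restricting $\mu$ to a ball $B_n$ gives a finite measure, and the hyperplanes $\{\langle x,e\rangle=t\}$ for distinct $t$ are disjoint. Hence $T(e)$ is countable. This alone does not suffice: every direction could a priori carry some charged hyperplane.

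The key step is to show that the set $\mathcal D$ of \emph{bad directions} $e\in S^{\Nb-1}$ (those with $T(e)\neq\varnothing$) has surface measure zero. I would prove this by induction on dimension, using the absence of point masses as the base input. The standard fact is this: if a finite measure on $\R^\Nb$ charges no affine subspace of dimension $k-1$, then only countably many affine $k$-planes are charged. Indeed, two distinct $k$-planes meet in a plane of dimension at most $k-1$, which is $\mu$-null. So the charged $k$-planes have null pairwise intersections, and a finite measure can give mass $>1/m$ to at most finitely many of them. Now stratify $\mu$ (restricted to $B_n$) as follows. Let $\mu_0$ be the part living on countably many charged lines; since points are null by hypothesis, only countably many lines are charged. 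Then proceed to charged $2$-planes after removing lines, and so on, up to hyperplanes. At each step only countably many affine subspaces carry mass not already accounted for by lower-dimensional ones. A hyperplane with normal $e$ is charged only if it contains a charged $k$-plane $L$ for some $k\le \Nb-1$, with $\mu(L)$ carrying the mass. This requires $e\perp \operatorname{dir}(L)$, a great subsphere of dimension $\Nb-1-k\le \Nb-2$, which is null in $S^{\Nb-1}$. Countably many such subspheres give a null set $\mathcal D_n$, and $\mathcal D=\bigcup_n\mathcal D_n$ is null. The main obstacle is making this stratification rigorous: a hyperplane's mass must be attributed to a charged lower-dimensional plane or to a ``diffuse'' part that charges no proper subspace except hyperplanes themselves. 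The cleanest route is to apply the countability fact directly at the top level, arguing that the charged hyperplanes whose pairwise intersections are null are countable. I would first reduce to showing that the set of charged hyperplanes is contained in a countable family plus hyperplanes containing charged lower-dimensional subspaces, by induction on $k$.

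Once $\mathcal D$ is null in $S^{\Nb-1}$, the set of $R\in SO(\Nb)$ with $Re_j\in\mathcal D$ for some $j$ is Haar-null, since each map $R\mapsto Re_j$ pushes Haar measure to surface measure. Hence a good frame exists, and $\mu(\partial Q)=0$ for all $Q\in\mathbb Q$ follows. Continuity of $g(d)=\mu(Q_d(x))$ then follows from monotone continuity of measures: $Q_{d'}\uparrow Q_d$ as $d'\uparrow d$, and $Q_{d'}\downarrow\overline{Q_d}$ as $d'\downarrow d$, with $\mu(\overline{Q_d}\setminus Q_d)=0$. At $d=0$ continuity uses $\mu(\{x\})=0$. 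Finally, absence of point masses under Condition~\ref{cond.cap} is Lemma~\ref{lem:mu_ll_cap}.
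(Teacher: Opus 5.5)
The paper gives no proof of this statement; it is quoted from \cite{RozenblumShargorodsky}. Your argument is therefore a self-contained replacement for a citation, and it is correct. The following steps are all fine:
the reduction to the hyperplanes $\{\langle x,e_j\rangle=t\}$;
the choice of frame via Haar measure (the push-forward of Haar measure under $R\mapsto Re_j$ is the invariant measure on $S^{\Nb-1}$, so a surface-null Borel set has Haar-null preimage);
the continuity of $g$, including $d=0$ via $\mu(\{x\})=0$.
Your proof also shows exactly where the absence of atoms is used. It forces every subspace responsible for a charged hyperplane to have dimension $\ge1$, so the corresponding great subspheres of normals have dimension $\le\Nb-2$. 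A single atom at $a$ would charge hyperplanes through $a$ in every direction.

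The step you flagged as the obstacle has to be phrased with residual measures rather than with $\mu$ itself. Put $\mu_0=\mu|_{B_n}$, and let $\{L^k_i\}_i$ be the $k$-planes charged by $\mu_{k-1}$. This family is countable, since $\mu_{k-1}$ charges no affine subspace of dimension $<k$. Then set $\mu_k=\mu_{k-1}|_{\R^\Nb\setminus\bigcup_i L^k_i}$.

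Planes charged by $\mu$ itself need not be countable: if $\mu$ lives on a segment of a line $\ell$, every hyperplane through $\ell$ is charged. For the same reason, drop your remark about applying the countability fact ``directly at the top level''.

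With the residual measures, $\mu_{\Nb-1}$ charges no hyperplane. So $\mu_0(P)>0$ forces $\mu_{k-1}(P\cap L^k_i)>0$ for some $k,i$. Hence $L^k_i\subset P$, since otherwise $P\cap L^k_i$ has dimension $<k$ and is $\mu_{k-1}$-null.

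A shorter route avoids the induction. Call $L$ \emph{minimally charged} (for $\mu|_{B_n}$) if it has positive mass but every affine $L'\subsetneq L$ is null. Two distinct such subspaces meet in a proper subspace of each, hence in a null set, so there are only countably many. Each has dimension $\ge1$. Every charged hyperplane contains one: take a charged affine subspace of $P$ of least dimension. This produces the countable family of great subspheres in one step.
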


\begin{remark}\label{rem_syst}
    From now on, we always assume that the coordinate system is the one described in Proposition~\ref{coor_syst}. In particular, now 
    we do not need to distinguish between open and closed cubes when integrating against the measure $\mu$.
\end{remark}

Now, we are ready to prove Proposition \ref{Prop.form}.

\begin{proof}
    It is clear that the form $a_{\mu}$ is densely defined. Let us show that it is closed and lower semibounded.

   Fix $d>0$. For $\mathbf{k} = (k_1, \dots, k_\Nb) \in \mathbb{Z}^\Nb$, define
    \begin{equation*}
        Q_{\mathbf{k}} := \prod_{j=1}^\Nb \left( d k_j,\, d(k_j + 1) \right).
    \end{equation*}
    Then, the collection $\{ \overline{Q_{\mathbf{k}}} \}_{\mathbf{k} \in \mathbb{Z}^\Nb}$ forms a cover of $\mathbb{R}^\Nb$ by cubes 
    of edgelength $d$. Recalling Proposition \ref{coor_syst} and Remark \ref{rem_syst}, we write 
    \begin{equation*}
        \int_{\mathbb{R}^\Nb} |u|^2 \mathrm{d}\mu = \sum_{\mathbf{k} \in \mathbb{Z}^\Nb} \int_{Q_{\mathbf{k}}} |u|^2 \mathrm{d}\mu.
    \end{equation*}
    Therefore, inequality \eqref{Lem_2.5}, applied to each cube, gives
    \begin{align*}
        \int_{\mathbb{R}^\Nb} |u|^2 \mathrm{d}\mu &\leq \sum_{\mathbf{k} \in \mathbb{Z}^\Nb} \varpi \sup_{\Eb\subset Q_{\mathbf{k}}} 
        Z_\mu(\Eb)
        \left( \|\nabla^l u\|_{L^2(Q_{\mathbf{k}})}^2 + d^{-2l} \|u\|_{L^2(Q_{\mathbf{k}})}^2 \right)\\
        & \leq \varpi \sup_{\mathbf{k}\in \mathbb{Z}^\Nb}\sup_{\Eb\subset Q_{\mathbf{k}}} Z_\mu(\Eb)
        \left( \|\nabla^l u\|_{L^2(\mathbb{R}^\Nb)}^2 + d^{-2l} \|u\|_{L^2(\mathbb{R}^\Nb)}^2 \right).
    \end{align*} 
    Since $\mu$ satisfies Condition~\ref{cond.cap}, there exists $d>0$
    sufficiently small such that
    \begin{equation*}
        \varpi \sup_{\mathbf{k}\in \mathbb{Z}^\Nb}\sup_{\Eb\subset Q_{\mathbf{k}}}
       Z_\mu(\Eb) < 1.
    \end{equation*}
    Now, the KLMN theorem \cite[Theorem X.17]{ReedSimon2} implies that $a_{\mu}$ is closed and lower semibounded.
\end{proof}

Thus, we see that the quadratic form $a_\mu$ defines, via the
representation theorem, a self-adjoint operator $H_\mu$. The main objective of this paper is to study the negative spectrum of the 
operator $H_\mu$.

We will also need to consider the quadratic form $a_{\mu}$ restricted
to sub-domains of $\mathbb{R}^\Nb$. Specifically, we consider domains
of the form $U = Q$ for some cube $Q \in \mathbb{Q}$, or
$U = \mathbb{R}^\Nb_+$, where
\begin{equation*}
    \mathbb{R}^\Nb_+ = \{x \in \mathbb{R}^\Nb : x_1 > 0\}.
\end{equation*}
We introduce the quadratic form
\begin{equation}\label{quad_form_LO_loc}
    a_{\mu,U}[u] = \int_{U} |\nabla^l u|^2 \mathrm{d}x - \int_{U} |u|^2 \mathrm{d}\mu
\end{equation}
with the domain $D(a_{\mu,U}) = \Ht^l(U)$. 
\begin{proposition}\label{Prop.form.loc}
  Suppose that $\mu$ is a Radon measure satisfying Condition \ref{cond.cap}. Let $U$ be either a cube $Q \in \mathbb{Q}$ or the 
  half-space $\mathbb{R}^{\Nb}_+$. Then, the quadratic form  \eqref{quad_form_LO_loc} is correctly defined, semibounded and closable 
  on $\Ht^l(U)$ and thus defines a self-adjoint operator. Classically, this definition produces the operator with Neumann boundary 
  conditions.
\end{proposition}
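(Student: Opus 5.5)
The plan is to copy the proof of Proposition~\ref{Prop.form}, replacing the lattice of cubes covering $\R^\Nb$ by a lattice covering $U$. The aim is a relative form bound with bound strictly less than $1$:
\begin{equation*}
\int_U |u|^2\,\mathrm{d}\mu\le \theta\,\|\nabla^l u\|_{L^2(U)}^2 + C_\theta\|u\|_{L^2(U)}^2,\qquad \theta<1,\ u\in\Ht^l(U).
\end{equation*}
Given this bound, the KLMN theorem \cite[Theorem X.17]{ReedSimon2} applied to the closed nonnegative form $\|\nabla^l u\|^2_{L^2(U)}$ on $\Ht^l(U)$ shows that $a_{\mu,U}$ is well defined, closed and lower semibounded on $\Ht^l(U)$. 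The form $\|\nabla^l u\|^2_{L^2(U)}+\|u\|^2_{L^2(U)}$ is closed on $\Ht^l(U)$ because it is exactly the Sobolev norm squared. Since no boundary condition is imposed in the form domain, the associated operator is, classically, the Neumann realization.

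\textbf{Case $U=Q\in\mathbb Q$ with edge $D$.} Take $d=D/m$ with $m\in\mathbb N$ large and split $Q$ into $m^\Nb$ congruent subcubes $Q_{\mathbf k}$. Each subcube belongs to $\mathbb Q$, since its edges are parallel to the fixed axes. By Proposition~\ref{coor_syst} and Remark~\ref{rem_syst}, $\mu(\partial Q_{\mathbf k})=0$, so
\begin{equation*}
\int_Q|u|^2\,\mathrm{d}\mu=\sum_{\mathbf k}\int_{Q_{\mathbf k}}|u|^2\,\mathrm{d}\mu .
\end{equation*}
The restriction of $u\in\Ht^l(Q)$ to $Q_{\mathbf k}$ lies in $\Ht^l(Q_{\mathbf k})$. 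Applying Lemma~\ref{lem:local_estimate_scaled} on each subcube and summing gives
\begin{equation*}
\int_Q|u|^2\,\mathrm{d}\mu\le \varpi M(d)\bigl(\|\nabla^l u\|^2_{L^2(Q)}+d^{-2l}\|u\|^2_{L^2(Q)}\bigr),
\end{equation*}
where $M(d)$ is the supremum in \eqref{2.1}. Condition~\ref{cond.cap} lets us choose $m$ so large that $\varpi M(d)<1$.

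\textbf{Case $U=\R^\Nb_+$.} Use the lattice $Q_{\mathbf k}=\prod_j(dk_j,d(k_j+1))$ with $k_1\ge0$. These cubes tile $\R^\Nb_+$ up to $\mu$-null boundaries, the boundaries being null by Remark~\ref{rem_syst}. The same summation then applies verbatim.

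Well-definedness of $\int_U|u|^2\,\mathrm{d}\mu$ for $u\in\Ht^l(U)$ is a point needing care. Here $u$ must be understood through its quasi-continuous representative, and one must check this does not depend on choices. I would first establish the inequality for $u\in C^\infty(\overline U)\cap\Ht^l(U)$, which is dense in $\Ht^l(U)$ for the cube and the half-space since both are Lipschitz domains. The bound then extends the map $u\mapsto u|_{\mathrm{supp}\,\mu\cap U}\in L^2(\mu)$ by continuity. This extension is consistent because, by Lemma~\ref{lem:mu_ll_cap}, $\mu$ does not charge sets of zero capacity.

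The main obstacle is exactly this step: Lemma~\ref{lem:local_estimate_scaled} is stated for $u\in\Ht^l(Q_d)$, and one must ensure the traces on different subcubes agree $\mu$-a.e. with the trace of $u$ on $U$. Density of smooth functions up to the boundary settles this, since for smooth $u$ everything is pointwise.
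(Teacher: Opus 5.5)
Your proposal is correct and follows the paper's proof. The paper likewise decomposes $U$ into sufficiently small cubes from $\mathbb Q$, applies Lemma~\ref{lem:local_estimate_scaled} on each one together with Condition~\ref{cond.cap} to get a relative form bound below $1$, and concludes by the KLMN theorem exactly as in Proposition~\ref{Prop.form}; your remarks on defining $\int_U|u|^2\,\mathrm{d}\mu$ by continuity from functions smooth up to the boundary make explicit a step the paper leaves implicit.
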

\begin{proof}
    The proof is the same as in Proposition \ref{Prop.form}: we decompose $U$ into sufficiently small cubes and then apply 
    Condition~\ref{cond.cap} together with the KLMN theorem.
\end{proof}
Finally, we recall the following Poincar\'e inequality; see for example Lemma 1.1.11 in \cite{Mazya}. The constant $\rho$ appearing 
below will play a role in the spectral estimates of subsequent sections.

\begin{lemma}\label{l_Poincare}
    For any $l$, $\Nb\in\mathbb{N}$ (\emph{here}, not necessarily $2l<\Nb$) and  any cube $Q$ of edgelength $d$,  there exists a 
    constant $\rho = \rho(\Nb,l)>0$ such that 
    \begin{equation*}
        \frac{1}{d^{2l}}\|f\|_{L^2(Q)}^2 \leq \rho \|\nabla^l f\|_{L^2(Q)}^2
    \end{equation*}
    for all $f \in \mathbb{P}_{l-1}(\mathbb{R}^\Nb)^{\perp}\cap \Ht^l(Q)$ (in $L^2(Q)$ sense), where 
    $\mathbb{P}_{l-1}(\mathbb{R}^\Nb)$ is the set of all polynomials of degree less than $l$. 
\end{lemma}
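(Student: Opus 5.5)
The plan is a scaling reduction to the unit cube, followed by a compactness argument there; the compactness step is the only nontrivial one.

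\emph{Scaling.} Write $Q=Q_d(x)$ and set $g(y)=f(x+dy)$ for $y\in Q_1(0)$. The orthogonality $f\perp\mathbb P_{l-1}$ in $L^2(Q)$ transfers to $g\perp\mathbb P_{l-1}$ in $L^2(Q_1(0))$, because the affine change of variables maps $\mathbb P_{l-1}$ onto itself. The norms transform as
\[
\|g\|_{L^2(Q_1)}^2=d^{-\Nb}\|f\|_{L^2(Q)}^2,
\qquad
\|\nabla^l g\|_{L^2(Q_1)}^2=d^{2l-\Nb}\|\nabla^l f\|_{L^2(Q)}^2 .
\]
Hence the claimed inequality with constant $\rho$ for $f$ on $Q$ is equivalent to $\|g\|_{L^2(Q_1)}^2\le\rho\|\nabla^l g\|_{L^2(Q_1)}^2$ for $g$ on the unit cube. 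It therefore suffices to treat $d=1$, and $\rho$ then depends only on $\Nb$ and $l$.

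\emph{Unit cube by contradiction.} Suppose there are $g_k\in\Ht^l(Q_1)$ with $g_k\perp\mathbb P_{l-1}$, $\|g_k\|_{L^2}=1$ and $\|\nabla^l g_k\|_{L^2}\to0$. I need $(g_k)$ bounded in $\Ht^l(Q_1)$, and only the top-order derivatives and $L^2$ norms are controlled at this point. The intermediate derivatives $\nabla^j g_k$, $0<j<l$, are bounded by the Ehrling--Gagliardo--Nirenberg interpolation inequality on the Lipschitz domain $Q_1$. This is the one place where domain regularity enters; alternatively, one can argue through the Stein extension already used in Lemma~\ref{Mazya_thm_11.3}.

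\emph{Compactness and conclusion.} Rellich--Kondrachov makes $\Ht^l(Q_1)\hookrightarrow \Ht^{l-1}(Q_1)$ compact, so a subsequence converges in $\Ht^{l-1}$. Combined with $\nabla^l g_k\to0$, the subsequence is Cauchy in $\Ht^l$, with limit $g$ satisfying $\nabla^l g=0$, $\|g\|_{L^2}=1$ and $g\perp\mathbb P_{l-1}$. A function on the connected set $Q_1$ with all $l$-th weak derivatives zero is a polynomial of degree less than $l$. So $g\in\mathbb P_{l-1}\cap\mathbb P_{l-1}^\perp=\{0\}$, which contradicts $\|g\|_{L^2}=1$.

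The main obstacle is the boundedness in $\Ht^l(Q_1)$, i.e. controlling the intermediate derivatives via the interpolation or extension step; everything else is routine. Nothing in the argument uses $2l<\Nb$, consistent with the statement. An alternative route is to cite Maz'ya's Lemma 1.1.11, a Sobolev integral-representation bound of $f-Pf$ by $\nabla^l f$, with $P$ the $L^2(Q)$-projection onto $\mathbb P_{l-1}$, combined with the scaling step.
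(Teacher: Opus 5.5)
Your proof is correct, but it is not what the paper does. The paper gives no argument at all and simply cites Maz'ya's Lemma~1.1.11, i.e.\ the Sobolev integral-representation route that you mention only as an alternative. Your scaling step is right: the affine change of variables preserves $\mathbb P_{l-1}$, and the factors $d^{-\Nb}$ and $d^{2l-\Nb}$ combine to $d^{-2l}$. The unit-cube step is the standard soft compactness argument.

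\emph{What your route buys and costs.} It needs only Rellich--Kondrachov and Ehrling-type interpolation. However, it is non-constructive, so it says nothing about the size of $\rho(\Nb,l)$. Also, the interpolation step presupposes that the intermediate derivatives of $g_k$ already lie in $L^2(Q_1)$. This is harmless here: the paper effectively uses $\Ht^l(Q)=W^{l,2}(Q)$, since it applies the Stein extension to it, even though its written definition controls only $\nabla^l u$ and $u$.

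\emph{What the cited route buys.} It bounds $\|f-\Pi f\|_{L^2(Q)}\lesssim d^{l}\|\nabla^l f\|_{L^2(Q)}$ for some polynomial $\Pi f$ of degree $<l$. It works directly for functions with only $\nabla^l f\in L^2(Q)$, gives a computable constant, and has the scaling built in. Whatever $\Pi f$ is, passing to $f\perp\mathbb P_{l-1}$ is free: $0$ is then the best $L^2(Q)$-approximation of $f$ from $\mathbb P_{l-1}$, so $\|f\|_{L^2(Q)}\le\|f-\Pi f\|_{L^2(Q)}$.

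The paper only uses that $\rho$ depends on $\Nb$ and $l$, so either proof serves.
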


\section{The Otelbaev function. Definition and main properties}\label{sec:otelbaev}
In this section we introduce the Otelbaev function and establish some of its important properties. This construction is a 
modification of the Otelbaev 
function $q^*$ in the regime $2l<\Nb$, introduced in  \cite{O81}, \cite{O83}. In these papers, this function was used to study the 
spectrum of Schr{\"o}dinger-like operators with potential being a function, tending, in a certain averaged sense, to $-\infty$ at 
infinity, 
so that the whole spectrum is discrete. In our setting, Otelbaev function quantifies the qualitative Molchanov criterion for 
discreteness of the 
spectrum for $l=1$, see \cite{Molchanov} and also \cite{MazyaShubin}. 

The case of one-dimensional Schr\"odinger operators  was considered in \cite{Nursultanov16,FulscheNursultanov22}, where an
Otelbaev-type function was adapted to negative  measure-potentials in dimension one.

In the present paper, we adapt the definition of the Otelbaev function to
non-negative Radon measures, possibly singular with respect to the Lebesgue
measure, in the regime $2l<\Nb$. Moreover, we study the negative spectrum of
the operator, with the positive half-axis filled with the essential spectrum,
and this leads to certain changes.

 We begin with the definition. The Otelbaev function will be parametrized by a positive number $\tau$; this is reflected in the 
 notation.
\begin{definition}\label{def:delta}
   For a fixed $\tau>0$, we define 
    \begin{equation*}
        \delta_{\mu, \tau}(x) = \sup \left\{ d: \; \sup_{\Eb\subset Q_d(x)} Z_\mu(\Eb) < \frac{1}{\tau} \right\},
    \end{equation*}
    where the supremum is taken over Borel sets $\Eb$ with non-zero capacity. We define the Otelbaev function as follows 
    \begin{equation*}
        q_{\mu,\tau}^*(x) = \frac{1}{\delta_{\mu,\tau}^{2l}(x)}.
    \end{equation*}
    If $\delta_{\mu,\tau}(x) = \infty$, we set $q_{\mu,\tau}^*(x) = 0$. We will also use the notation
    \begin{equation*}
        Q_{\mu,\tau}(x):= Q_{\delta_{\mu,\tau}(x)}(x).
    \end{equation*}
\end{definition}

Due to Condition \ref{cond.cap}, the following inequality holds 
\begin{equation*}
    \sup_{\Eb\subset Q_d(x)} Z_\mu(\Eb) < \frac{1}{\tau}
\end{equation*}
for any $\tau>0$, $x\in \mathbb{R}^\Nb$ and sufficiently small $d>0$. Therefore, the Otelbaev function is well defined.

\begin{remark}\label{rem:infinite_d} 
If, for some $\tau>0,$ there exists $x_0\in\mathbb R^\Nb$ such that $\delta_{\mu,\tau}(x_0)=\infty$, then 
$\delta_{\mu,\tau}(x)=\infty$ for all $x\in\mathbb{R}^\Nb$. Indeed, in this case, $Z_\mu(\Eb) < 1/\tau$ for all bounded Borel  sets 
$\Eb\subset \mathbb{R}^\Nb$, and hence, $\delta_{\mu,\tau}\equiv\infty$. 
\end{remark}

\begin{remark}[Monotonicity]
If $\mu_1\le\mu_2$, then $Z_{\mu_1}(\Eb)\le Z_{\mu_2}(\Eb)$ for every Borel
$\Eb$ with $\operatorname{Cap}_l(\Eb)>0$, hence
$\delta_{\mu_1,\tau}\ge\delta_{\mu_2,\tau}$ and
$q^*_{\mu_1,\tau}\le q^*_{\mu_2,\tau}$. In particular $0<\eta_1\le\eta_2$ gives
$q^*_{\eta_1\mu,\tau}\le q^*_{\eta_2\mu,\tau}$. The Otelbaev function is also monotone in the variable $\tau$: for 
$0<\tau_1\le\tau_2$, $q^*_{\mu,\tau_1}\le q^*_{\mu,\tau_2}$.
\end{remark}

\begin{remark}[Scaling properties]\label{rem:qstar_scaling}
We first note the important coupling-scaling property, $ q^*_{\eta\mu,\tau}=q^*_{\mu,\eta\tau}$ for $\eta>0$. We also record the 
behaviour under spatial scaling. For $t>0$, we define
\begin{equation*}
    \widetilde\mu_t(\Eb):=\mu(t\Eb),
    \qquad
    \mu_t(\Eb):=t^{2l-\Nb}\mu(t\Eb),
    \qquad \Eb\subset\mathbb R^\Nb.
\end{equation*}
By the homogeneity of the capacity,  $Z_{\mu_t}(\Eb)=Z_\mu(t\Eb)$. Consequently,
\begin{equation*}
    \delta_{\mu_t,\tau}(x)=t^{-1}\delta_{\mu,\tau}(tx),
    \qquad
    q^*_{\mu_t,\tau}(x)=t^{2l}q^*_{\mu,\tau}(tx).
\end{equation*}
Due to coupling-scaling property, 
\begin{equation*}
    \delta_{\widetilde\mu_t,\tau}(x)
    =
    t^{-1}\delta_{\mu,t^{\Nb-2l}\tau}(tx),
    \qquad
    q^*_{\widetilde\mu_t,\tau}(x)
    =
    t^{2l}q^*_{\mu,t^{\Nb-2l}\tau}(tx).
\end{equation*}
\end{remark}

\begin{lemma}[Lipschitz continuity]\label{lem:continuity_d}
Let $\tau>0$. Suppose that $\delta_{\mu,\tau}(x)$ is finite at some point
$x\in\mathbb R^\Nb$; equivalently, by Remark~\ref{rem:infinite_d},
$\delta_{\mu,\tau}$ is finite everywhere. Then $\delta_{\mu,\tau}$ is
Lipschitz continuous on $\mathbb R^\Nb$ and satisfies
\begin{equation*}
    |\delta_{\mu,\tau}(x)-\delta_{\mu,\tau}(y)|
    \leq 2\|x-y\|_\infty,
    \qquad x,y\in\mathbb R^\Nb.
\end{equation*}
\end{lemma}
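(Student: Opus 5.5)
The plan is a nesting-of-cubes argument. The key observation is that the function
\[
F(d,x):=\sup_{\Eb\subset Q_d(x)} Z_\mu(\Eb)
\]
is monotone under inclusion of cubes. If $Q_{d}(y)\subset Q_{d'}(x)$, then every admissible $\Eb\subset Q_d(y)$ is also admissible for $Q_{d'}(x)$, so $F(d,y)\le F(d',x)$. Consequently, the set of admissible $d$ in Definition~\ref{def:delta}, namely those with $F(d,x)<1/\tau$, is an interval starting at $0$ (by Condition~\ref{cond.cap} it is nonempty). So $\delta_{\mu,\tau}(x)$ is the right endpoint of this interval.

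Now fix $x,y$ and put $h=\|x-y\|_\infty$. A point $z$ lies in $Q_d(y)$ iff $\|z-y\|_\infty<d/2$. Then $\|z-x\|_\infty<d/2+h$, so
\[
Q_{d}(y)\subset Q_{d+2h}(x).
\]
We then argue by cases on $\delta_{\mu,\tau}(x)$.
\begin{itemize}
\item If $\delta_{\mu,\tau}(x)\le 2h$, the bound $\delta_{\mu,\tau}(y)\ge \delta_{\mu,\tau}(x)-2h$ holds trivially, since $\delta_{\mu,\tau}(y)>0$.
\item Otherwise, take any $d'<\delta_{\mu,\tau}(x)$ with $d'>2h$ and set $d=d'-2h>0$. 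Then $Q_d(y)\subset Q_{d'}(x)$, hence $F(d,y)\le F(d',x)<1/\tau$, and $d$ is admissible at $y$. So $\delta_{\mu,\tau}(y)\ge d'-2h$. Letting $d'\uparrow\delta_{\mu,\tau}(x)$ gives $\delta_{\mu,\tau}(y)\ge\delta_{\mu,\tau}(x)-2h$.
\end{itemize}
Exchanging the roles of $x$ and $y$ gives the reverse inequality, and together they yield the claimed estimate.

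Finiteness is used only so that the differences make sense. By Remark~\ref{rem:infinite_d}, $\delta_{\mu,\tau}$ is either finite everywhere or infinite everywhere, and here it is finite everywhere. Lipschitz continuity with respect to the Euclidean norm then follows from $\|\cdot\|_\infty\le|\cdot|$.

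The only delicate point is confirming that the admissible set really is an interval, so that admissibility of some $d$ is equivalent to $d<\delta_{\mu,\tau}(x)$ (or $d\le\delta_{\mu,\tau}(x)$). This follows directly from the monotonicity of $F$ in $d$, and the argument above avoids the question of endpoint attainment altogether by working with $d'<\delta_{\mu,\tau}(x)$ and passing to the limit.
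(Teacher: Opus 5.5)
Your proof is correct and matches the paper's argument: the inclusion $Q_d(y)\subset Q_{d+2\|x-y\|_\infty}(x)$ and monotonicity of the supremum of $Z_\mu$ under inclusion give $\delta_{\mu,\tau}(y)\ge\delta_{\mu,\tau}(x)-2\|x-y\|_\infty$, and exchanging $x$ and $y$ gives the other inequality. You also state explicitly that the admissible set of $d$ is an interval (so $d'<\delta_{\mu,\tau}(x)$ forces $F(d',x)<1/\tau$), and you treat the case $\delta_{\mu,\tau}(x)\le 2\|x-y\|_\infty$ separately; the paper uses both points tacitly.
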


\begin{proof}
Let $x,y\in\mathbb R^\Nb$ and set $r=\|x-y\|_\infty$. Then,
\begin{equation*}
    Q_d(y)\subset Q_{d+2r}(x),
    \qquad
    Q_d(x)\subset Q_{d+2r}(y),
\end{equation*}
for every $d>0$. Consequently,
\begin{equation*}
    \sup_{\Eb\subset Q_d(y)}Z_\mu(\Eb)
    \leq
    \sup_{\Eb\subset Q_{d+2r}(x)}Z_\mu(\Eb),
\end{equation*}
and the analogous inequality with $x$ and $y$ interchanged.

Let $d< \delta_{\mu,\tau}(x)-2r$. Then $d+2r<\delta_{\mu,\tau}(x)$, and by the definition of
$\delta_{\mu,\tau}(x)$,
\[
\sup_{\Eb\subset Q_{d+2r}(x)}Z_\mu(\Eb)<\frac{1}{\tau}.
\]
Hence, by the inclusions above,
\[
\sup_{\Eb\subset Q_d(y)}Z_\mu(\Eb)<\frac{1}{\tau},
\]
which implies $d\le \delta_{\mu,\tau}(y)$. Since this is true for arbitrary $d< \delta_{\mu,\tau}(x)-2r$, we obtain
\[
\delta_{\mu,\tau}(y)\ge \delta_{\mu,\tau}(x)-2r.
\]
By symmetry,
\[
\delta_{\mu,\tau}(x)\ge \delta_{\mu,\tau}(y)-2r.
\]
Therefore,
\[
|\delta_{\mu,\tau}(x)-\delta_{\mu,\tau}(y)|\le 2\|x-y\|_\infty,
\]
so $\delta_{\mu,\tau}(\cdot)$ is Lipschitz continuous on
$\mathbb R^\Nb$.
\end{proof}

\begin{corollary}\label{cor:continuity_qstar}
For a fixed $\tau>0$,
the Otelbaev function $q_{\mu,\tau}^*(\cdot)$ is continuous on $\mathbb R^\Nb$.
\end{corollary}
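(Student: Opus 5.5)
The corollary reduces to Lemma~\ref{lem:continuity_d} together with Remark~\ref{rem:infinite_d}, by a two-case split.

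\emph{Case 1: $\delta_{\mu,\tau}(x_0)=\infty$ for some $x_0$.} Then Remark~\ref{rem:infinite_d} gives $\delta_{\mu,\tau}\equiv\infty$. By the convention in Definition~\ref{def:delta}, $q^*_{\mu,\tau}\equiv 0$, which is trivially continuous.

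\emph{Case 2: $\delta_{\mu,\tau}$ is finite everywhere.} We first check that $\delta_{\mu,\tau}(x)>0$ for every $x$. Condition~\ref{cond.cap} provides some $d_0>0$ with
\[
\sup_{x\in\R^\Nb}\ \sup_{\Eb\subset Q_{d_0}(x)} Z_\mu(\Eb)<\frac{1}{\tau}.
\]
Hence $\delta_{\mu,\tau}(x)\ge d_0>0$ for all $x$, and in fact uniformly so.

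Lemma~\ref{lem:continuity_d} shows that $\delta_{\mu,\tau}$ is Lipschitz, hence continuous, as a map $\R^\Nb\to[d_0,\infty)$. The function $t\mapsto t^{-2l}$ is continuous on $(0,\infty)$. Therefore the composition $q^*_{\mu,\tau}=\delta_{\mu,\tau}^{-2l}$ is continuous.

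We can say more. Since $\delta_{\mu,\tau}\ge d_0$, the mean value theorem gives
\[
|q^*_{\mu,\tau}(x)-q^*_{\mu,\tau}(y)|\le 2l\,d_0^{-2l-1}\,|\delta_{\mu,\tau}(x)-\delta_{\mu,\tau}(y)|\le 4l\,d_0^{-2l-1}\|x-y\|_\infty .
\]
So $q^*_{\mu,\tau}$ is even globally Lipschitz.

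No step is a real obstacle. The only point needing care is the positive lower bound on $\delta_{\mu,\tau}$, so that the map $t\mapsto t^{-2l}$ is applied away from its singularity at $0$; this bound is exactly what Condition~\ref{cond.cap} supplies.
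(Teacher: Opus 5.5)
Your proposal is correct and follows the paper's own proof: the same case split via Remark~\ref{rem:infinite_d}, followed by an appeal to Lemma~\ref{lem:continuity_d}. You also make explicit the uniform lower bound $\delta_{\mu,\tau}\ge d_0>0$ coming from Condition~\ref{cond.cap}, which the paper leaves implicit, and this gives the correct bonus that $q^*_{\mu,\tau}$ is globally Lipschitz with constant $4l\,d_0^{-2l-1}$.
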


\begin{proof}
If $\delta_{\mu,\tau}(x_0)=\infty$ for some point $x_0$, then by
Remark~\ref{rem:infinite_d} $\delta_{\mu,\tau}$ is infinite  on
$\mathbb R^\Nb$, and hence $q_{\mu,\tau}^*\equiv 0$, which is trivially continuous. Otherwise, $\delta_{\mu,\tau}(x)<\infty$ for all 
$x\in\mathbb R^\Nb$, and the statement follows from Lemma~\ref{lem:continuity_d}.
\end{proof}

\begin{remark}
    In contrast to the Otelbaev function in \cite[Definition 3.1]{FulscheNursultanovRozenblum2025}, $q_{\mu,\tau}^*(x)$ is not 
    necessarily continuous with respect to the parameter $\tau$; see the example  below.
\end{remark}

\begin{example}
   There exists a Radon measure $\mu$ on $\mathbb{R}^\Nb$ satisfying Condition \ref{cond.cap} such that the corresponding Otelbaev 
   function $q_{\mu,\tau}^*$ is not continuous with respect to $\tau$.

    Indeed, let us choose $\mu = \chi_{B_1(0)} \mathcal{L}$, where $\mathcal{L}$ is the Lebesgue measure on $\mathbb{R}^\Nb$, 
    $B_1(0)$ is the unit ball centered at the origin and $\chi_{B_1(0)}$ is the corresponding indicator function. For a  fixed 
    $x_0\in \mathbb{R}^\Nb$, $\|x_0\|_\infty>1$ we consider the sets
    \begin{equation*}
        \mathcal{X} = \{\tau>0: \; \delta_{\mu,\tau}(x_0) < \infty\},
        \qquad
        \mathcal{Y} = \{\tau>0: \; \delta_{\mu,\tau}(x_0) = \infty\}.
    \end{equation*}
    For $\tau>0$ large enough, we have
    \begin{equation*}
         Z_\mu(B_1(0))> \frac{1}{\tau},
    \end{equation*}
    therefore, the set  $\mathcal{X}$ is not empty. Moreover, for $\tau\in \mathcal{X}$,
    \begin{equation*}
        2\|x_0\|_\infty - 2 \leq \delta_{\mu,\tau}(x_0) \leq 2\|x_0\|_\infty + 2,
    \end{equation*}
    and consequently, 
    \begin{equation*}
        \frac{1}{(2\|x_0\|_\infty + 2)^{2l}} \leq q_{\mu,\tau}^*(x_0) \leq \frac{1}{(2\|x_0\|_\infty - 2)^{2l}}.
    \end{equation*}

    Next, let us show that $\mathcal{Y}$ is not empty. Clearly, for any $x\in\mathbb{R}^\Nb$ and $d>0$,
    \begin{equation}\label{eq_rest_L_satis_Cond_cap}
        \mu(Q_d(x)) = \mathcal{L} (Q_d(x) \cap B_1(0)) \leq \mathcal{L}(Q_d(x)) \leq C_{\Nb,l} d^\Nb.
    \end{equation}
     By Lemma~\ref{l: measure_upper_bound_cap}, applied with $s = \Nb$, the last estimate implies that 
    \begin{equation*}
        \mu(\Eb)^{1 - \frac{2l}{\Nb}} \leq C_{\Nb,l} \operatorname{Cap}_l(\Eb)
    \end{equation*}
    for any Borel set $\Eb$. Therefore, 
    \begin{equation}\label{est_quant_L}
        Z_\mu(\Eb) < C_{\Nb,l} \; \mu(\Eb)^{\frac{2l}{\Nb}} = C_{\Nb,l} \; \mathcal{L}(\Eb\cap B_1(0))^{\frac{2l}{\Nb}} < C_{\Nb,l}
    \end{equation}
    for any Borel set $\Eb$ and some constant $C_{\Nb,l}>0$ depending only on $\Nb$ and $l$. Therefore, for $\tau<1/C_{\Nb,l}$, it 
    follows that $\delta_{\mu,\tau}(x_0) = \infty$, and hence, $\mathcal{Y}$ is not empty.

    We summarize
    \begin{equation*}
        q_{\mu,\tau}^*
        \begin{cases}
            = 0, & \tau\in \mathcal{Y} \neq \emptyset,\\
            \geq \frac{1}{(2\|x_0\|_\infty + 2)^{2l}}, & \tau\in \mathcal{X }\neq \emptyset.
        \end{cases}
    \end{equation*}
    Since $(0,\infty) = \mathcal{X}\bigcup \mathcal{Y}$, we conclude that $q_{\mu,\tau}^*(x_0)$ is not continuous with respect to 
    $\tau$.
\end{example}

We will now show that the Otelbaev function has controlled local  variation.
\begin{lemma}\label{l_cont_var_LO}
 Suppose that $\mu$ is a Radon measure satisfying Condition \ref{cond.cap}. Fix some $\tau>0$ and $x\in\mathbb R^\Nb$. Then for every 
 pair of points $y,z$,
\begin{equation*}
    y \in Q_{\mu,\tau}(x)
    \qquad
    \text{and}
    \qquad
    z \in \frac{1}{2}Q_{\mu,\tau}(x),
\end{equation*}
one has
\begin{equation*}
    \frac{1}{2}\delta_{\mu,\tau}(y) \leq  \delta_{\mu,\tau}(x) \leq 2 \delta_{\mu,\tau}(z),
\end{equation*}
or equivalently,
\begin{equation*}
    \frac{1}{2^{2l}} q_{\mu,\tau}^*(z) \leq q_{\mu,\tau}^*(x) \leq 2^{2l} \, q_{\mu,\tau}^*(y).
\end{equation*}

\end{lemma}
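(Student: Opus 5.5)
The plan is to derive both inequalities from cube inclusions combined with the monotonicity of $d\mapsto\sup_{\Eb\subset Q_d}Z_\mu(\Eb)$ under inclusion. This is the same mechanism as in Lemma~\ref{lem:continuity_d}, now applied at the natural scale $\delta=\delta_{\mu,\tau}(x)$ rather than at an arbitrary displacement. If $\delta_{\mu,\tau}\equiv\infty$, both sides are infinite (or $q^*\equiv0$) and there is nothing to prove, by Remark~\ref{rem:infinite_d}. So we assume $\delta:=\delta_{\mu,\tau}(x)<\infty$.

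\emph{Left inequality, $\delta_{\mu,\tau}(y)\le 2\delta$ for $y\in Q_{\mu,\tau}(x)$.} Since $\|y-x\|_\infty<\delta/2$, for any $d'>2\delta$ we have $Q_{d'}(y)\supset Q_{d'-2\|y-x\|_\infty}(x)\supset Q_{d''}(x)$ for some $d''>\delta$. By the definition of $\delta$ as a supremum, and since the set of admissible $d$ is an interval (smaller cubes have smaller suprema), for every $d''>\delta$ the condition fails:
\begin{equation*}
\sup_{\Eb\subset Q_{d''}(x)}Z_\mu(\Eb)\ge 1/\tau .
\end{equation*}
Hence the same holds for $Q_{d'}(y)$, so $d'$ is not admissible at $y$. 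This gives $\delta_{\mu,\tau}(y)\le d'$ for every $d'>2\delta$, and therefore $\delta_{\mu,\tau}(y)\le 2\delta$. To make this step rigorous, I will note explicitly that admissibility is downward closed, so that non-admissibility is upward closed and the supremum characterization applies at every $d''>\delta$. In fact the estimate is slightly better, $\delta_{\mu,\tau}(y)\le \delta+2\|y-x\|_\infty<2\delta$, consistent with Lemma~\ref{lem:continuity_d}.

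\emph{Right inequality, $\delta\le 2\delta_{\mu,\tau}(z)$ for $z\in\frac12 Q_{\mu,\tau}(x)$.} Here $\|z-x\|_\infty<\delta/4$. For any $d<\delta/2$ we get $Q_d(z)\subset Q_{d+2\|z-x\|_\infty}(x)\subset Q_{d+\delta/2}(x)$ with $d+\delta/2<\delta$. Any $d'<\delta$ is admissible at $x$, since there exists an admissible $d_0\in(d',\delta]$ by the definition of the supremum, and admissibility is downward closed. Therefore
\begin{equation*}
\sup_{\Eb\subset Q_d(z)}Z_\mu(\Eb)\le\sup_{\Eb\subset Q_{d+\delta/2}(x)}Z_\mu(\Eb)<1/\tau ,
\end{equation*}
so $\delta_{\mu,\tau}(z)\ge d$. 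Letting $d\uparrow\delta/2$ gives $\delta_{\mu,\tau}(z)\ge\delta/2$.

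Finally, the $q^*$ form follows by raising the two inequalities to the power $-2l$. I expect no real obstacle. The only delicate point is the boundary case at exactly $d=\delta$, where the supremum may or may not be attained; it is handled above by working with strict inequalities $d''>\delta$ and $d<\delta/2$ and then passing to the limit.
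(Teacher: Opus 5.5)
Your proof is correct and takes essentially the paper's route: both inequalities come from the cube inclusions $Q_{d'-2\|y-x\|_\infty}(x)\subset Q_{d'}(y)$ and $Q_d(z)\subset Q_{d+\delta_{\mu,\tau}(x)/2}(x)$, combined with the supremum definition of $\delta_{\mu,\tau}$. The paper phrases each step as a contradiction argument with near-extremal sets $\Eb_k$, whereas you use the monotonicity of the admissible set directly; as you observe, the strict inequalities $\delta_{\mu,\tau}(y)<2\delta_{\mu,\tau}(x)$ and $\delta_{\mu,\tau}(z)>\delta_{\mu,\tau}(x)/2$ also follow at once from Lemma~\ref{lem:continuity_d}.
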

\begin{proof}
Let $x \in \mathbb{R}^\Nb$. If $\delta_{\mu,\tau}(x) = \infty$, the claim is trivial. If $\delta_{\mu,\tau}(x)<\infty$, let $y 
\in Q_{\mu,\tau}(x)$. Then $Q_{\mu,\tau}(x) \subset Q_{2\delta_{\mu,\tau}(x)}(y)$. In particular, since our 
cubes are open, there exists $d'> \delta_{\mu,\tau}(x)$ such that 
\begin{equation*}
    Q_{\mu,\tau}(x) \subset Q_{d'}(x) \subset Q_{2\delta_{\mu,\tau}(x)}(y).
\end{equation*}
By the definition of $\delta_{\mu,\tau}(x)$, there exist sets $\Eb_k \subset Q_{d'}(x) \subset Q_{2\delta_{\mu,\tau}(x)}(y)$ such 
that
\begin{equation}\label{eq:E_k}
    Z_\mu(\Eb_k)\geq \frac{1}{\tau} - \frac{1}{k}.
\end{equation}
Suppose that $\delta_{\mu,\tau}(y) > 2\, \delta_{\mu,\tau}(x)$. Let $d'' \in (2\, \delta_{\mu,\tau}(x), \delta_{\mu,\tau}(y))$. Then 
$\Eb_k \subset Q_{d''}(y)$. Due to \eqref{eq:E_k},
\begin{equation*}
    \sup_{\Eb\subset Q_{d''}(y)}Z_\mu(\Eb) \geq \frac{1}{\tau} -\frac{1}{k}
\end{equation*}
for all $k\in \mathbb{N}$, and hence,
\begin{equation*}
    \sup_{\Eb\subset Q_{d''}(y)}Z_\mu(\Eb) \geq \frac{1}{\tau}.
\end{equation*}
Since $d'' < \delta_{\mu,\tau}(y)$, this contradicts the definition of $\delta_{\mu,\tau}(y)$. Thus, $\delta_{\mu,\tau}(y)\le 
2\,\delta_{\mu,\tau}(x)$.

Next, let $z \in \frac{1}{2}Q_{\mu,\tau}(x)$. Suppose that $\delta_{\mu,\tau}(z) \leq \delta_{\mu,\tau}(x)/2$. We will show that this 
is wrong. Since 
$\frac{1}{2}Q_{\mu,\tau}(x)$ is an open cube, we have $\overline{Q_{\mu,\tau}(z)} \subset Q_{\mu,\tau}(x)$. Hence, there exists 
$r'>\delta_{\mu,\tau}(z)$ and $r'' < \delta_{\mu,\tau}(x)$ such that
\begin{equation}\label{incl:bound_var}
    Q_{r'}(z) \subset Q_{r''}(x).
\end{equation}

Since $r'>\delta_{\mu,\tau}(z)$, by the definition of $\delta_{\mu,\tau}(z)$ there exist sets $\Eb_k\subset Q_{r'}(z)$ such that
\begin{equation*}
    Z_\mu(\Eb_k)\geq \frac{1}{\tau}-\frac{1}{k}.
\end{equation*}
By \eqref{incl:bound_var}, $\Eb_k\subset Q_{r''}(x)$, and consequently
\begin{equation*}
    \sup_{\Eb\subset Q_{r''}(x)} Z_\mu(\Eb)\geq \frac{1}{\tau}-\frac{1}{k}
\end{equation*}
for every $k\in\mathbb{N}$. Letting $k\to\infty$ gives
\begin{equation*}
    \sup_{\Eb\subset Q_{r''}(x)} Z_\mu(\Eb)\geq \frac{1}{\tau}.
\end{equation*}
Since $r''<\delta_{\mu,\tau}(x)$, this contradicts the definition of $\delta_{\mu,\tau}(x)$. Therefore, the opposite assumption 
holds, $\delta_{\mu,\tau}(z) > 
\delta_{\mu,\tau}(x)/2$. This completes the proof.
\end{proof}

\begin{lemma}\label{est_for_small_cubes}
    Let $\tau > 0$ and $\mu$ be a Radon measure satisfying Condition \ref{cond.cap}. Then for any $x \in \mathbb{R}^{\Nb}$ such that 
    $\delta_{\mu,\tau}(x)<\infty$,
    \begin{equation*}
        \sup_{\Eb\subset Q_{\mu,\tau}(x)} Z_\mu(\Eb) \leq \frac{1}{\tau}.
    \end{equation*}
\end{lemma}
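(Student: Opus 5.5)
The plan is to argue by approximation from inside. Write $\delta=\delta_{\mu,\tau}(x)<\infty$ and fix a Borel set $\Eb\subset Q_{\mu,\tau}(x)=Q_\delta(x)$ with $\operatorname{Cap}_l(\Eb)>0$. We will show that $Z_\mu(\Eb)\le 1/\tau$.

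For $d<\delta$, the function $d\mapsto\sup_{\Eb'\subset Q_d(x)}Z_\mu(\Eb')$ is nondecreasing in $d$. Since $\delta$ is the supremum of the admissible $d$, for every $d<\delta$ some $d'\in(d,\delta]$ is admissible. By monotonicity, $\sup_{\Eb'\subset Q_d(x)}Z_\mu(\Eb')<1/\tau$ for all $d<\delta$.

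Next, set $\Eb_d:=\Eb\cap Q_d(x)$ for $d<\delta$. Since the cube $Q_\delta(x)$ is open, $Q_\delta(x)=\bigcup_{d<\delta}Q_d(x)$, and so $\Eb_d\uparrow\Eb$ as $d\uparrow\delta$.

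Continuity of $\mu$ from below gives $\mu(\Eb_d)\to\mu(\Eb)$. For the capacity, we need $\operatorname{Cap}_l(\Eb_d)\to\operatorname{Cap}_l(\Eb)$ along an increasing sequence $d_k\uparrow\delta$. This is the main obstacle. It is the standard property that the outer capacity $\overline{\operatorname{Cap}}_l$ is continuous along increasing sequences of arbitrary sets (Choquet capacity property; see \cite{Mazya}, \cite{AdamsHedberg}). For Borel sets, the outer capacity coincides with $\operatorname{Cap}_l$, as noted in Section 2, so the property applies here. Monotonicity of capacity alone gives only $\operatorname{Cap}_l(\Eb_d)\le\operatorname{Cap}_l(\Eb)$, which goes the wrong way for our purpose. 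The lower bound therefore genuinely needs this continuity.

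With both limits in hand, the argument closes as follows. Since $\operatorname{Cap}_l(\Eb)>0$, for $k$ large we have $\operatorname{Cap}_l(\Eb_{d_k})>0$. Each such $\Eb_{d_k}$ is admissible in $Q_{d_k}(x)$ with $d_k<\delta$, hence $\mu(\Eb_{d_k})<\tau^{-1}\operatorname{Cap}_l(\Eb_{d_k})$. Passing to the limit gives $\mu(\Eb)\le\tau^{-1}\operatorname{Cap}_l(\Eb)$, that is, $Z_\mu(\Eb)\le1/\tau$. Taking the supremum over $\Eb$ finishes the proof.

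If the continuity of capacity along increasing sequences were unavailable, a fallback is to shrink instead of intersect. One would use the dilations $\Eb^{(t)}:=x+t(\Eb-x)\subset Q_{t\delta}(x)$ for $t<1$, and attempt an approximation of $\Eb$ by these. However, this changes $\mu$ uncontrollably, so the capacity-continuity route is the one I would commit to.
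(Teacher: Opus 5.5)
Your argument is correct and has the same skeleton as the paper's proof: approximate $\Eb$ by $\Eb_d=\Eb\cap Q_d(x)$, use $\sup_{\Eb'\subset Q_d(x)}Z_\mu(\Eb')<1/\tau$ for $d<\delta$, and use continuity of $\mu$ from below.

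The difference is the capacity step, and there your assessment is wrong: monotonicity does not go the wrong way. To pass to the limit you need an \emph{upper} bound for $\mu(\Eb_d)$, and monotonicity gives exactly that:
\[
\mu(\Eb_d)\le\tau^{-1}\operatorname{Cap}_l(\Eb_d)\le\tau^{-1}\operatorname{Cap}_l(\Eb),\qquad d<\delta .
\]
Letting $d\uparrow\delta$ yields $\mu(\Eb)\le\tau^{-1}\operatorname{Cap}_l(\Eb)$. The first inequality also holds trivially when $\operatorname{Cap}_l(\Eb_d)=0$, since then $\mu(\Eb_d)=0$ by Lemma~\ref{lem:mu_ll_cap}.

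The paper phrases this as a contradiction. If $Z_\mu(\Eb)>1/\tau$, then $Z_\mu(\Eb_d)\ge\mu(\Eb_d)/\operatorname{Cap}_l(\Eb)\to Z_\mu(\Eb)$. Hence $Z_\mu(\Eb_d)>1/\tau$ for $d$ close to $\delta$, which contradicts the definition of $\delta_{\mu,\tau}(x)$.

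Your appeal to continuity of the outer capacity along increasing sequences is a true property here. It transfers from the Riesz capacity because Remark~\ref{rem:S_R_capacities} gives exact proportionality rather than mere two-sided equivalence. So nothing breaks, but the ingredient is heavier than needed: the paper uses only monotonicity of $\operatorname{Cap}_l$ and $\sigma$-additivity of $\mu$. Your closing remark about dilations can be dropped.
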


\begin{proof}
    We prove the statement by contradiction. Assume there exists $\Eb\subset Q_{\mu,\tau}(x)$ such that $Z_\mu(\Eb) > 1/\tau$. By the 
    inner regularity of $\mu$, taking into account Proposition \ref{coor_syst} and Remark \ref{rem_syst}, we obtain
    \begin{equation*}
        \mu\left(Q_d(x)\cap \Eb\right) \rightarrow \mu(Q_{\mu,\tau}(x) \cap \Eb) = \mu(\Eb),
    \end{equation*}
    as $d\uparrow \delta_{\mu,\tau}(x)$. Due to the monotonicity of $\operatorname{Cap}_l(\cdot)$, for any $d < 
    \delta_{\mu,\tau}(x)$,
    \begin{equation*}
        \operatorname{Cap}_l(Q_d(x)\cap \Eb)
        \leq
        \operatorname{Cap}_l(Q_{\mu,\tau}(x)\cap \Eb) = \operatorname{Cap}_l(\Eb).
    \end{equation*}
    Therefore, for $d\in (0, \delta_{\mu,\tau}(x))$ sufficiently close to $\delta_{\mu,\tau}(x)$,
    \begin{equation*}
        \frac{\mu\left(Q_d(x)\cap \Eb\right)}{\operatorname{Cap}_l(Q_d(x)\cap \Eb)} \geq \frac{\mu\left(Q_d(x)\cap 
        \Eb\right)}{\operatorname{Cap}_l(\Eb)} > \frac{1}{\tau}.
    \end{equation*}
    This contradicts the definition of $\delta_{\mu,\tau}(x)$.
\end{proof}

\section{Eigenvalue counting function. Estimates}
\subsection{Coverings associated with a measure}

In this section, we demonstrate the role of the Otelbaev function in obtaining eigenvalue estimates. We first introduce some 
terminology and recall a useful consequence of the Besicovitch covering lemma, in the form stated in \cite{Guzman}

We say that a covering by cubes $\Xi=\{Q_k\}$ has multiplicity at most $\theta$ if every point of $\mathbb{R}^{\Nb}$ belongs to at 
most 
$\theta$ sets in the covering, in other words,
\begin{equation*}
    \sum_k \chi_{Q_k}(x) \leq \theta
    \quad \text{for all } x \in \mathbb{R}^{\Nb},
\end{equation*}
where $\chi_{Q_k}$ denotes the indicator function of $Q_k$. The covering $\Xi$ is said to have \emph{linkage} at most $\kappa$ if it 
can be split into the union of no more than $\kappa$ families, $\Xi=\bigcup\Xi_j$, each of $\Xi_j$ consisting of disjoint cubes.  Of 
course, the multiplicity of a cover is no greater than its linkage, finite or infinite. The Besicovitch covering lemma establishes 
the following.

\begin{lemma}\label{Besic} Let a cube $Q(x)\in \mathbb{Q}$ be associated with any point $x\in\R^{\Nb}$ and the sizes of cubes $Q(x)$ 
are bounded. Then an at most countable set $\{x_k\}$ can be extracted so that the cubes $Q(x_k)$ form a covering of multiplicity and 
linkage numbers no greater than $\theta$, the latter number depending only on the dimension $\Nb$ but not on the cubes $Q(x)$. 
\end{lemma}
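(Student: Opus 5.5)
This is the Besicovitch covering theorem in the form given by de Guzm\'an, so the plan is to reproduce its standard proof for cubes of $\mathbb Q$. Because the covering is described through the centres $x$ of the cubes $Q(x)$, I will work with the norm $\|\cdot\|_\infty$, in which each $Q(x)$ is a ball centred at $x$. Set $R=\sup_x \operatorname{side}(Q(x))<\infty$.

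\emph{Step 1: greedy selection.} First I handle a bounded region: the centres lie in a bounded set $A$. Choose $x_1\in A$ with $\operatorname{side}Q(x_1)>\tfrac12\sup_{x\in A}\operatorname{side}Q(x)$. Inductively, choose $x_{k+1}\in A\setminus\bigcup_{i\le k}Q(x_i)$ whose side exceeds half the supremum of the sides over that remaining set. The selected centres do not lie in earlier cubes. Consequently the cubes $\tfrac13 Q(x_k)$ are pairwise disjoint: the sides are almost nonincreasing, since a later side is at most twice an earlier one. A volume argument in the bounded region, where all sides are bounded below if the process does not terminate, shows that the sides tend to $0$. Every $x\in A$ is eventually covered: otherwise $\operatorname{side}Q(x)$ would stay admissible in the supremum, contradicting the decay of the sides. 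For an unbounded index set of centres, I will run the construction on the dyadic annuli $\{2^{j}R\le |x|_\infty<2^{j+1}R\}$. With $R$ bounded, cubes from annuli $j$ and $j'$ with $|j-j'|\ge2$ do not meet, so the parity classes are disjoint and at most a factor $2$ is lost.

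\emph{Step 2: bounded multiplicity.} Fix a point $y$ and the selected cubes containing $y$. Split them into those with comparable side and those at dyadically different scales.

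For cubes of comparable side to a given one, disjointness of the $\tfrac13$-cubes together with containment in a fixed dilate bounds their number by $C(\Nb)$.

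For cubes at different scales, I use the geometric fact that if $Q(x_i)$, $i<k$, contains $y$, then $x_k\notin Q(x_i)$. A cone or angular argument in $\|\cdot\|_\infty$ then shows that the directions $(x_k-y)/|x_k-y|$ of different large cubes are separated. This bounds the count by a constant $\theta(\Nb)$.

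This angular estimate is the main obstacle. I would follow the classical proof closely, using that for cubes the relevant sets are orthants, which gives the cleaner version of the estimate.

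\emph{Step 3: linkage.} I now split the selected family into at most $\theta$ disjoint subfamilies by greedy colouring in the order of selection. Assign to $Q(x_k)$ a colour not used by any earlier $Q(x_i)$ meeting it. An intersecting earlier cube has side at least about half that of $Q(x_k)$ and does not contain $x_k$. The same packing-plus-angular argument as in Step 2 then bounds the number of such earlier cubes by $\theta(\Nb)-1$. Hence $\theta$ colours suffice, and each colour class consists of disjoint cubes. Since linkage dominates multiplicity, both numbers are bounded by $\theta(\Nb)$ after enlarging the constant, and the constant is independent of the cubes $Q(x)$.
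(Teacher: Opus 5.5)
Your Step~1 is fine: the greedy choice, the disjointness of the $\tfrac13$-cubes, exhaustion of a bounded set, and the reduction to $\|\cdot\|_\infty$-annuli split by parity all work. Greedy colouring in Step~3 is also the right mechanism for linkage. The paper itself gives no proof of this lemma; it quotes it from de Guzm\'an and from Frank--Laptev--Weidl.

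The gap is exactly the step you call the main obstacle, and it is not merely unproved: with your selection rule the claimed angular separation is false. Your rule gives, for $i<j$, only $x_j\notin Q(x_i)$ and $s_j<2s_i$, where $s$ denotes side length. Take $0<\varepsilon\ll t$ and set $x_k=0$, $s_k=2\varepsilon$; $x_i=te_1$, $s_i=2t$; $x_j=2te_1$, $s_j=4t-\varepsilon$. Then:
\begin{itemize}
\item $s_j<2s_i$, and $\|x_j-x_i\|_\infty=t=s_i/2$, so $x_j\notin Q(x_i)$;
\item $x_k$ lies in neither large cube, since $\|x_k-x_i\|_\infty=t$ and $\|x_k-x_j\|_\infty=2t\ge s_j/2$;
\item both large cubes meet $Q(x_k)$ and have side $\simeq t/\varepsilon$ times that of $Q(x_k)$;
\item the point $y=\tfrac34\varepsilon e_1$ lies in all three cubes;
\item for $t=0.9$ the two large sides even fall in different dyadic classes.
\end{itemize}
Yet $x_i-x_k$, $x_j-x_k$, $x_i-y$ and $x_j-y$ all point along $e_1$. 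So neither angular nor orthant separation holds, whether measured from $y$ (Step~2) or from $x_k$ (Step~3). Since this configuration satisfies every relation your argument uses, Steps~2--3 as written give no bound on $\theta$.

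The standard remedy is to select with a constant $\beta\in(\tfrac12,1)$, e.g.\ side $>\tfrac34\sup$, so that $s_j<\tfrac43 s_i$. Suppose $x_i-x_k$ and $x_j-x_k$ were parallel, with both cubes large compared with $Q(x_k)$.
\begin{itemize}
\item If $x_j$ lies beyond $x_i$, then $x_k\notin Q(x_i)$ and $x_j\notin Q(x_i)$ force $\|x_j-x_k\|_\infty\ge s_i$. On the other hand, $Q(x_j)\cap Q(x_k)\neq\emptyset$ gives $\|x_j-x_k\|_\infty<\tfrac12(s_j+s_k)<\tfrac23 s_i+\tfrac12 s_k$. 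This is impossible once $s_k<\tfrac23 s_i$.
\item If $x_j$ lies before $x_i$, then $Q(x_i)\cap Q(x_k)\ne\emptyset$ and $x_j\notin Q(x_i)$ give $\|x_j-x_k\|_\infty<\tfrac12 s_k$. This contradicts $x_k\notin Q(x_j)$, because $s_j>s_k$.
\end{itemize}
Making this quantitative gives a uniform lower bound on the angle, and then the count is bounded by a covering number of the unit sphere.

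Alternatively, keep $\tfrac12$ and prove that a narrow cone at $x_k$ contains at most two large earlier centres; a third would need side larger than about $4s_i-2s_k$, contradicting $s<2s_i$. One of these arguments must actually be carried out. It is the real content of the Besicovitch lemma, which the paper takes from the cited references.
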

The proof can be found, e.g, in \cite{FrankLaptevWeidl} for a covering of a compact set. The extension for the whole space (or for 
unbounded sets) can be found in \cite{Guzman}.
A direct application of  the Besicovitch lemma to our concrete situation establishes  the following property.
\begin{lemma}\label{l_covering_LO}
Let $\mu$ be a Radon measure on $\mathbb{R}^\Nb$ satisfying Condition \ref{cond.cap}. For fixed numbers $\tau, \eta > 0$  there exist  
constants $\theta$, $\kappa$ depending only on $\Nb$ and a sequence of points $\{x_k\}_{k \in \mathbb{N}}$ in $ \mathbb{R}^\Nb$ such 
that:
\begin{enumerate}
    \item The space $\mathbb{R}^\Nb$ is covered by open cubes
    \begin{equation*}
        Q_k = Q_{d_k}(x_k), \qquad d_k = \min\left\{ \delta_{\theta\mu,\tau}(x_k),\, \eta \right\},
    \end{equation*}
       \item The covering multiplicity of $\{Q_k\}_{k\in\mathbb N}$ does not exceed $\theta$.
    \item The linkage of the covering  does not exceed $\kappa$.
\end{enumerate}
\end{lemma}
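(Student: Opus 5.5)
The plan is to apply the Besicovitch covering lemma (Lemma \ref{Besic}) directly, with the cube attached to each point $x\in\R^\Nb$ chosen as
\begin{equation*}
    Q(x):=Q_{d(x)}(x),\qquad d(x)=\min\{\delta_{\theta\mu,\tau}(x),\eta\}.
\end{equation*}
Here $\theta=\theta(\Nb)$ is the constant from Lemma \ref{Besic}. It depends only on the dimension, so it may be fixed first and then inserted into the definition of $\delta_{\theta\mu,\tau}$; this avoids any circularity. The measure $\theta\mu$ still satisfies Condition \ref{cond.cap}, because $Z_{\theta\mu}=\theta Z_\mu$. Hence, by the discussion following Definition \ref{def:delta}, $\delta_{\theta\mu,\tau}(x)>0$ for every $x$. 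If $\delta_{\theta\mu,\tau}\equiv\infty$ (Remark \ref{rem:infinite_d}), then $d(x)=\eta$ and the cubes are still well defined.

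First, I will check the hypotheses of Lemma \ref{Besic}. Each $Q(x)$ belongs to $\mathbb{Q}$, since it is an open, axis-parallel cube centred at $x$. Its edgelength is positive, so $x\in Q(x)$. The sizes are uniformly bounded by $\eta$, and this is the only reason for truncating at $\eta$: without it $\delta$ could be unbounded, and the lemma, as stated, needs bounded sizes. Lemma \ref{Besic} then yields an at most countable family $\{x_k\}$ whose cubes $Q_k=Q_{d_k}(x_k)$ cover $\R^\Nb$ with multiplicity and linkage both at most $\theta$. This gives items (1)--(3) with $\kappa:=\theta$. If the extracted family happens to be finite, I will repeat points to index it by $\mathbb N$; repetition of a cube would raise the multiplicity, so instead I will note that the family is infinite whenever it covers $\R^\Nb$ with bounded cube sizes. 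That holds here, because finitely many bounded cubes cannot cover the whole space.

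The one point needing care is that the covering must be of all of $\R^\Nb$ and not just of a compact set. This is exactly the extension cited from \cite{Guzman}, and boundedness of the sizes is what makes it work. I would therefore verify that the cited version requires nothing more than $\sup_x d(x)<\infty$ together with $x\in Q(x)$. Measurability or continuity of $d$ is not needed for Besicovitch, but in any case Lemma \ref{lem:continuity_d} gives Lipschitz continuity of $\delta_{\theta\mu,\tau}$, hence of $d$. I expect no real obstacle beyond this bookkeeping.
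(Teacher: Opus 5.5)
Your proposal is correct and matches the paper, which obtains this lemma as a direct application of the Besicovitch covering lemma (Lemma \ref{Besic}) to the cubes $Q_{d(x)}(x)$ with $d(x)=\min\{\delta_{\theta\mu,\tau}(x),\eta\}$; the truncation at $\eta$ is there precisely to make the cube sizes bounded. Your extra bookkeeping is sound and only makes explicit what the paper leaves implicit: fixing $\theta=\theta(\Nb)$ before forming $\theta\mu$, positivity of $d(x)$ via Condition \ref{cond.cap}, taking $\kappa=\theta$, and noting that the extracted family must be infinite.
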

\begin{remark}
Clearly, if $\mu$ satisfies Condition~\ref{cond.cap}, then for any fixed
$\theta>0$ the scaled measure $\theta\mu$ also satisfies
Condition~\ref{cond.cap}. In particular, the quantities $\delta_{\theta\mu,\tau}$ and $q^*_{\theta\mu,\tau}$
are well defined for all $\tau$, $\theta>0$. For fixed values of $\theta, \tau,$ the covering may, generally, depend on these 
numbers, however, the multiplicity and the linkage numbers do not depend on them.  
\end{remark} 
Having the Besicovitch covering as above, we associate with it a collection of operators. Namely,  cubes in $\Xi$, 
Proposition~\ref{Prop.form.loc} applied 
to the measure $\theta\mu$, the quadratic forms $\{a_{\theta\mu, Q_k}\}_{Q_k\in \Xi}$ generate self-adjoint semibounded operators 
which we 
denote by $\{H_{\theta\mu}^k\}_{k\in \mathbb{N}}$, suppressing the dependence on $\tau$ and $\eta$. 

\subsection{Eigenvalue estimates}

For a self-adjoint operator $H$, we denote by $N(-\lambda; H)$ the number of eigenvalues of $H$ in $(-\infty,-\lambda)$, counted with 
multiplicity; if the spectrum of $H$ below $-\lambda$ is not discrete, we set $N(-\lambda; H)=\infty$.

In the classical Dirichlet-Neumann bracketing, the eigenvalue counting  function in a larger domain $\Omega$ is estimated from above 
via the sum of counting functions for Neumann problem in smaller domains $\Omega_j$ forming a decomposition of $\Omega$. The 
following 
result, proved in \cite{Rozenblum2022} (extending the one in \cite{FrankLaptevWeidl}), shows that such an upper estimate, with a 
controllable loss in the constant, holds also in a weaker sense, namely, for 
the \emph{decomposition} replaced by a \emph{covering} of finite multiplicity. For the reader’s convenience, we repeat the proof in 
our setting.

\begin{lemma}\label{lem.Nsmaller_sumN_LO}
    Let $\mu$ be a Radon measure on
    $\mathbb{R}^\Nb$ satisfying Condition~\ref{cond.cap}.
    Let $H_\mu$ be the operator generated by the quadratic form~\eqref{quad_form_LO},
    and let $\{H_{\theta\mu}^k\}_{k\in\mathbb{N}}$ be the operators introduced above, the Neumann operators in the cubes. Then, for 
    any $\lambda\geq 0$, the following 
    inequality holds 
    \begin{equation*}
        N(-\lambda;H_\mu) \leq \sum_{k\in \mathbb{N}} N(-\lambda;H_{\theta\mu}^k).
    \end{equation*}
\end{lemma}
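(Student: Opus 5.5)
The plan is to compare quadratic forms via the variational (Glazman) principle. Since the covering $\{Q_k\}$ has multiplicity at most $\theta$, we have $\sum_k\chi_{Q_k}\ge 1$ everywhere and $\sum_k\chi_{Q_k}\le\theta$. Hence, for every $u\in\Ht^l(\R^\Nb)$,
\begin{equation*}
\int_{\R^\Nb}|\nabla^l u|^2\,\mathrm{d}x\ \ge\ \frac1\theta\sum_k\int_{Q_k}|\nabla^l u|^2\,\mathrm{d}x,
\qquad
\int_{\R^\Nb}|u|^2\,\mathrm{d}\mu\ \le\ \sum_k\int_{Q_k}|u|^2\,\mathrm{d}\mu .
\end{equation*}
The second inequality also uses that $\mu(\partial Q_k)=0$, by Remark~\ref{rem_syst}. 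Combining the two, we get
\begin{equation*}
a_\mu[u]+\lambda\|u\|^2_{L^2(\R^\Nb)}\ \ge\ \frac1\theta\sum_k\Bigl(a_{\theta\mu,Q_k}[u|_{Q_k}]+\lambda\|u\|^2_{L^2(Q_k)}\Bigr).
\end{equation*}
For this we write $\lambda\|u\|^2\ge\frac\lambda\theta\sum_k\|u\|^2_{L^2(Q_k)}$, using multiplicity again, and $\int|u|^2\mathrm{d}\mu\le\frac1\theta\sum_k\int_{Q_k}|u|^2\,\mathrm{d}(\theta\mu)$. The scaling by $\theta$ is exactly why the local operators carry the measure $\theta\mu$.

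Next we introduce the isometry-type map $J:\Ht^l(\R^\Nb)\to\bigoplus_k\Ht^l(Q_k)$, $Ju=(u|_{Q_k})_k$. This map is injective, since the cubes cover $\R^\Nb$. Consider the direct-sum operator $\mathbf H=\bigoplus_k H^k_{\theta\mu}$ on $\bigoplus_k L^2(Q_k)$, with form $\mathbf a[(v_k)]=\sum_k a_{\theta\mu,Q_k}[v_k]$. We have $N(-\lambda;\mathbf H)=\sum_k N(-\lambda;H^k_{\theta\mu})$; if the sum is infinite there is nothing to prove. Suppose $L\subset\Ht^l(\R^\Nb)$ is a subspace with $a_\mu[u]<-\lambda\|u\|^2$ for all $u\in L\setminus\{0\}$. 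By the displayed inequality, $\mathbf a[Ju]+\lambda\|Ju\|^2<0$ on $JL\setminus\{0\}$, and $\dim JL=\dim L$. By the Glazman lemma applied to $\mathbf H$ (the negative sign is preserved under multiplication by $1/\theta$), $\dim L\le N(-\lambda;\mathbf H)$. Taking the supremum over $L$ gives the claim. When $N(-\lambda;H_\mu)=\infty$, meaning the spectrum below $-\lambda$ is not discrete, such $L$ exist of arbitrary dimension, so the right-hand side is infinite as well.

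The main point requiring care is the $\lambda$-term. We need $\|u\|^2\ge\frac1\theta\sum_k\|u\|^2_{L^2(Q_k)}$, which requires $\lambda\ge0$ and only the upper multiplicity bound, so it holds. The potential term, by contrast, needs only the covering property, and the kinetic term needs only the multiplicity bound. Apart from this, we need to check that the restriction $u|_{Q_k}$ lies in the form domain $\Ht^l(Q_k)$ of the Neumann operator, which is immediate. We also need the strict inequality on $JL\setminus\{0\}$, which follows from injectivity of $J$.
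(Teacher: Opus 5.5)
Your proof is correct and follows the paper's route. The core step is the same form inequality, $\theta\,(a_\mu[u]+\lambda\|u\|^2)\ge\sum_k\bigl(a_{\theta\mu,Q_k}[u|_{Q_k}]+\lambda\|u\|^2_{L^2(Q_k)}\bigr)$, which follows from the covering property, the multiplicity bound and $\lambda\ge0$, and is then combined with the variational principle. The only difference is packaging: the paper applies Glazman's lemma to $H_\mu$ on the subspace of codimension at most $\sum_k N(-\lambda;H^k_{\theta\mu})$ cut out by the local eigenfunctions, whereas you go through $\bigoplus_k H^k_{\theta\mu}$ and the injective restriction map $J$. Your route tacitly needs this direct sum to be semibounded so that its form is $\sum_k a_{\theta\mu,Q_k}$; this is automatic in the nontrivial case where the sum is finite. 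Finally, $\mu(\partial Q_k)=0$ is not needed, since the open cubes already cover $\R^\Nb$.
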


\begin{proof}
    Fix $\lambda\geq 0$. For $k\in\mathbb{N}$, define 
    \begin{equation*}
        m_k=N\left(0, H^{k}_{\theta\mu}+\lambda\right)=N(-\lambda;H^{k}_{\theta\mu}).
    \end{equation*}
    If $\sum_{k\in\mathbb{N}}m_k=\infty$, there is nothing to prove. If $m_k>0$, let $\{w_{k,j}\}_{j=1}^{m_k}$ be normalized 
    eigenfunctions of $H^{k}_{\theta\mu}$ corresponding to eigenvalues below $-\lambda$. Extend each $w_{k,j}$ by zero to a 
    function on $\mathbb{R}^\Nb$ (still denoted $w_{k,j}$),
    and define linear functionals on $D(a_\mu)$ by
    \begin{equation*}
        \varphi_{k,j}(u)=(u,w_{k,j})_{L^2(\mathbb{R}^\Nb)}=(u,w_{k,j})_{L^2(Q_k)}.
    \end{equation*}
    Set 
    \begin{equation*}
        \Ls:=\bigcap_{k\in\mathbb{N}}\ \bigcap_{j=1}^{m_k} \ker \varphi_{k,j}.
    \end{equation*}
    Then $\operatorname{codim} \Ls\leq \sum_{k\in\mathbb{N}} m_k$. Fix $u\in \Ls$ and $k\in\mathbb{N}$. We denote by $u^{(k)}$ the 
    restriction of $u$ to the cube $Q_k$. Since $u^{(k)}$ is 
    orthogonal in $L^2(Q_k)$ to $\{w_{k,j}\}_{j=1}^{m_k}$, we derive that 
    \begin{equation*}
        a_{\theta\mu, Q_k}[u^{(k)}] + \lambda \|u^{(k)}\|_{L^2(Q_k)}^2 \geq 0.
    \end{equation*}
    Summing over $k$ gives
    \begin{multline*}
        0\leq \sum_{k\in\mathbb{N}} a_{\theta\mu, Q_k}[u^{(k)}] + \lambda \sum_{k\in\mathbb{N}} 
        \|u^{(k)}\|_{L^2(Q_k)}^2\\
        = \sum_{k\in\mathbb{N}}\int_{Q_k}|\nabla^l u|^2\mathrm{d}x
        -\theta\sum_{k\in\mathbb{N}}\int_{Q_k}|u|^2\mathrm{d}\mu
        +\lambda\sum_{k\in\mathbb{N}}\int_{Q_k}|u|^2\mathrm{d}x.
    \end{multline*}
    Since $\{Q_k\}$ is a cover for $\mathbb{R}^\Nb$ with multiplicity  at most $\theta$, we conclude 
    \begin{equation*}
        0 \leq \theta \left(\int_{\mathbb{R}^\Nb}|\nabla^l u|^2\mathrm{d}x - \int_{\mathbb{R}^\Nb}|u|^2\mathrm{d}\mu + \lambda  
        \int_{\mathbb{R}^\Nb}|u|^2\mathrm{d}x\right) = \theta a_{\mu}[u] + \theta\lambda \|u\|_{L^2(\mathbb{R}^\Nb)}^2.
    \end{equation*}
    Therefore, using the fact that $\operatorname{codim} \Ls\leq \sum_{k\in\mathbb{N}} m_k$ and the Glazman variational principle, we 
    obtain 
    \begin{equation*}
        N(-\lambda;H_\mu) = N(0;H_\mu+\lambda)\leq \sum_{k\in\mathbb{N}} m_k
        =\sum_{k\in\mathbb{N}} N(0;H^{k}_{\theta\mu}+\lambda)
        =\sum_{k\in\mathbb{N}} N(-\lambda;H^{k}_{\theta\mu}),
    \end{equation*}
    as claimed.
\end{proof}

Lemma~\ref{lem.Nsmaller_sumN_LO} will serve to estimate the counting function from above. For the bound from below, we need, by the 
Glazman variational lemma, to find subspaces of functions on which the quadratic form $a_\mu$ is strongly negative. The following 
lemma constructs such a trial function,
localized near any set on which the measure is large relative to its capacity; the construction is based on the capacitary potential 
of Proposition~\ref{prop:cappot}.
\begin{lemma}\label{lem:deep_trial}
Let $\mu$ be a Radon measure on
    $\mathbb{R}^\Nb$ satisfying Condition~\ref{cond.cap}. There exist constants $\tau_0=\tau_0(\Nb,l)>0$ and $c_*=c_*(\Nb,l)>0$ with 
    the following property. Let $\tau\le\tau_0$,
$y\in\R^\Nb$, $\delta>0$, and let $\Eb\subset Q_\delta(y)$ be a compact set with
\begin{equation}\label{eq:Z_large}
   Z_\mu(\Eb) \ge 1/\tau .
\end{equation}
Then there exists a non-zero function $f\in\htt^l(\R^\Nb)$ such that 
$\operatorname{supp} f\subset Q_{8\delta}(y)$ and
\begin{equation}\label{eq:deep}
   a_\mu[f]\ \le\ -\,c_*\,\delta^{-2l}\,\|f\|_{L^2(\R^\Nb)}^2 .
\end{equation}
\end{lemma}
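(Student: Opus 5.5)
We will take $f=\chi\, u_\Eb$, where $u_\Eb$ is the capacitary potential of Proposition~\ref{prop:cappot} and $\chi$ is a cutoff. Fix $\chi\in C_0^\infty(Q_{8\delta}(y))$ with $0\le\chi\le1$, $\chi\equiv1$ on $Q_{4\delta}(y)$, and $|\nabla^j\chi|\lesssim\delta^{-j}$ for $j\le l$. Then $\operatorname{supp}f\subset Q_{8\delta}(y)$. By Proposition~\ref{prop:cappot}\eqref{cp:one}, $f\ge1$ quasi-everywhere on $\Eb$, and by Lemma~\ref{lem:mu_ll_cap} the exceptional set carries no $\mu$-mass. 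Hence
\begin{equation*}
  \int|f|^2\,\mathrm d\mu\ \ge\ \mu(\Eb)\ \ge\ \tau^{-1}\operatorname{Cap}_l(\Eb).
\end{equation*}
In particular $\operatorname{Cap}_l(\Eb)>0$, since otherwise $\mu(\Eb)=0$ contradicts \eqref{eq:Z_large}. So $f\ne0$.

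Next we bound the energy. By Leibniz,
\begin{equation*}
  \|\nabla^l f\|^2\lesssim\sum_{j=0}^{l}\delta^{-2(l-j)}\int_{Q_{8\delta}\setminus Q_{4\delta}}|\nabla^j u_\Eb|^2+\|\nabla^l u_\Eb\|^2 .
\end{equation*}
The derivatives of $\chi$ vanish on $Q_{4\delta}$. On the annulus, $\operatorname{dist}(x,\Eb)\gtrsim\delta$, so \eqref{eq:cp_decay} gives $|\nabla^j u_\Eb|\lesssim\operatorname{Cap}_l(\Eb)\,\delta^{2l-\Nb-j}$. Each annulus term is therefore at most
\begin{equation*}
  \delta^{-2l+2j}\,\delta^{\Nb}\operatorname{Cap}_l(\Eb)^2\,\delta^{4l-2\Nb-2j}=\operatorname{Cap}_l(\Eb)^2\,\delta^{2l-\Nb}.
\end{equation*}
Since $\Eb\subset Q_\delta(y)$, monotonicity and scaling give $\operatorname{Cap}_l(\Eb)\le\operatorname{Cap}_l(\overline{Q_\delta})\simeq\delta^{\Nb-2l}$. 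So each annulus term is $\lesssim\operatorname{Cap}_l(\Eb)$. Together with \eqref{cp:energy} this yields $\|\nabla^l f\|^2\le C_1\operatorname{Cap}_l(\Eb)$ with $C_1=C_1(\Nb,l)$.

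For the $L^2$ norm we split $Q_{8\delta}(y)$. On the part at distance $\ge\delta$ from $\Eb$, the decay bound with $j=0$ gives contribution $\lesssim\delta^{\Nb}\operatorname{Cap}_l(\Eb)^2\delta^{4l-2\Nb}\lesssim\delta^{2l}\operatorname{Cap}_l(\Eb)$. The remaining part, within distance $\delta$ of $\Eb$, is the main obstacle: there we only know $0\le u_\Eb\le C_0$ from \eqref{cp:bdd}, which gives $\delta^{\Nb}$, not $\delta^{2l}\operatorname{Cap}_l(\Eb)$. We handle it with the Riesz representation \eqref{eq:u_repr}:
\begin{equation*}
  \|u_\Eb\|_{L^2(Q_{8\delta})}^2=\iint k(z,w)\,\mathrm d\nu_\Eb(z)\,\mathrm d\nu_\Eb(w),\qquad k(z,w)=\int_{Q_{8\delta}}|x-z|^{2l-\Nb}|x-w|^{2l-\Nb}\,\mathrm dx .
\end{equation*}
We bound $k(z,w)\lesssim\delta^{2l}|z-w|^{2l-\Nb}$ for $z,w\in Q_\delta(y)$. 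When $4l<\Nb$ this follows from the composition formula truncated at scale $\delta$; the cases $4l\ge\Nb$ need a direct computation, which we expect to give the same bound with $\delta^{2l}$ since the domain is bounded. Then
\begin{equation*}
  \|u_\Eb\|_{L^2(Q_{8\delta})}^2\lesssim\delta^{2l}\int u_\Eb\,\mathrm d\nu_\Eb\le\delta^{2l}\,\nu_\Eb(\R^\Nb)\simeq\delta^{2l}\operatorname{Cap}_l(\Eb),
\end{equation*}
using \eqref{eq:u_le_1_supp} and \eqref{eq:thm227}. This also covers the far part, so $\|f\|_{L^2}^2\le C_2\,\delta^{2l}\operatorname{Cap}_l(\Eb)$.

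To conclude, we combine the three bounds:
\begin{equation*}
  a_\mu[f]\le(C_1-\tau^{-1})\operatorname{Cap}_l(\Eb)\le-\tfrac{1}{2\tau}\operatorname{Cap}_l(\Eb)\le-\frac{1}{2\tau C_2}\,\delta^{-2l}\|f\|_{L^2}^2
\end{equation*}
once $\tau\le\tau_0:=1/(2C_1)$. Since $\tau\le\tau_0$, this gives \eqref{eq:deep} with $c_*=\tau_0^{-1}/(2C_2)\le\tau^{-1}/(2C_2)$. Finally, $f\in\htt^l$ because $u_\Eb\in\htt^l$ and $\chi$ is smooth with compact support.
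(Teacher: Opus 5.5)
Your proof is correct and matches the paper in the choice $f=\eta\,u_\Eb$, the potential and kinetic estimates, and the constants $\tau_0=(2C_1)^{-1}$, $c_*=C_1/C_2$. The one genuine difference is the $L^2$ bound.

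\textbf{The paper's route.} The paper does not split $Q_{8\delta}(y)$. It applies \eqref{cp:bdd} in the form $u_\Eb^2\le C_0\,u_\Eb$, not $u_\Eb^2\le C_0^2$; the latter is what produced your $\delta^{\Nb}$. Then Tonelli with the one-point kernel gives $\int_{Q_{8\delta}(y)}u_\Eb\,\mathrm{d}x\lesssim\int_\Eb\int_{Q_{8\delta}(y)}|x-z|^{2l-\Nb}\,\mathrm{d}x\,\mathrm{d}\nu_\Eb(z)\lesssim\delta^{2l}\nu_\Eb(\R^\Nb)$, with no case distinction.

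\textbf{Your route.} The two-point kernel argument avoids the global bound \eqref{cp:bdd}, since you only use $\int u_\Eb\,\mathrm{d}\nu_\Eb\le\nu_\Eb(\R^\Nb)$. The price is the estimate $k(z,w)\lesssim\delta^{2l}|z-w|^{2l-\Nb}$, which you left unproved for $4l\ge\Nb$. It does hold. Put $r=|z-w|\le\sqrt{\Nb}\,\delta$.
\begin{itemize}
\item On the parts where $|x-z|<r/2$ or $|x-w|<r/2$, the other factor is at most $(r/2)^{2l-\Nb}$, so these parts contribute $\lesssim r^{4l-\Nb}$.
\item On the rest, $|x-z|\le 3|x-w|$, so that part is $\lesssim\int_{r/2\le|x-z|\le C\delta}|x-z|^{4l-2\Nb}\,\mathrm{d}x$.
\item This last integral is $\lesssim r^{4l-\Nb}$, $\lesssim 1+\log(C\delta/r)$, or $\lesssim\delta^{4l-\Nb}$ according as $4l<\Nb$, $4l=\Nb$, or $4l>\Nb$.
\end{itemize}
Since $r\lesssim\delta$ and $2l-\Nb<0$, each of these is $\lesssim\delta^{2l}r^{2l-\Nb}$. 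With this filled in your proof is complete, though the paper's reduction $u_\Eb^2\le C_0u_\Eb$ is shorter.

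For $f\ne0$, the quickest justification is that $u_\Eb>0$ everywhere by \eqref{eq:u_repr}, so $f>0$ on $Q_{4\delta}(y)$.
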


\begin{proof}
Fix a cut-off $\eta\in C_0^\infty(Q_{8\delta}(y))$ with
$0\le\eta\le1$, $\eta=1$ on $Q_{4\delta}(y)$ and
\begin{equation*}
    |D^\gamma\eta|\le C\,\delta^{-|\gamma|}
\end{equation*}
for all $|\gamma|\le l$, where
$C=C(\Nb,l)$. Note that
\begin{equation}\label{eq:dist_annulus}
   \operatorname{dist}\bigl(\operatorname{supp}\nabla\eta,\ \Eb\bigr)
   \ \ge\ 2\delta-\tfrac{\delta}{2}\ >\ \delta,
\end{equation}
since $\operatorname{supp}\nabla\eta\subset
\overline{Q_{8\delta}(y)}\setminus Q_{4\delta}(y)$ and
$\Eb\subset Q_\delta(y)$.

Observe that \eqref{eq:Z_large} implies
$\operatorname{Cap}_l(\Eb)>0$, so Proposition~\ref{prop:cappot} applies. Let
$u_\Eb$ and $\nu_\Eb$ be as in its statement and proof, and set $f:=\eta\,u_\Eb $. Since $u_\Eb$ is bounded by Proposition 
\ref{prop:cappot} and $\nabla^l u_\Eb\in L^2(\R^\Nb)$,
we conclude that $u_\Eb\in \Ht^l_{\mathrm{loc}}(\R^\Nb)$. Hence, $f\in \Ht^l(\R^\Nb)$ is compactly supported, and therefore
$f\in\htt^l(\R^\Nb)$, with $\operatorname{supp}f\subset Q_{8\delta}(y)$.

We show that \eqref{eq:deep} holds. First we estimate the potential term. Since 
\begin{equation*}
    \Eb\subset Q_\delta(y)\subset Q_{4\delta}(y),
\end{equation*}
we have $f=u_\Eb$
on $\Eb$, and by Proposition \ref{prop:cappot}, $f\ge1$ on $\Eb\setminus e$, where
$e:=\Eb\cap\{u_\Eb<1\}$ is a Borel set with
$\operatorname{Cap}_l(e)=0$. Hence Lemma~\ref{lem:mu_ll_cap} gives $\mu(e)=0$. Hence, by
\eqref{eq:Z_large},
\begin{equation}\label{eq:pot_lower}
   \int_{\R^\Nb}|f|^2\mathrm{d}\mu \ge \mu(\Eb\setminus e) = \mu(\Eb)
    \ge \frac{1}{\tau} \operatorname{Cap}_l(\Eb).
\end{equation}

Next, we estimate the kinetic term. By the Leibniz rule, for $|\alpha|=l$,
\[
D^\alpha f=\eta\,D^\alpha u_\Eb
+\sum_{0\le\beta<\alpha}\binom{\alpha}{\beta}\,D^{\alpha-\beta}\eta\ D^\beta u_\Eb .
\]
The first term is estimated by \eqref{cp:energy} in Proposition \ref{prop:cappot}:
\begin{equation*}
    \|\eta\nabla^l u_\Eb\|_{L^2}^2\le C_0\operatorname{Cap}_l(\Eb).
\end{equation*}
Every remaining term is supported in $\operatorname{supp}\nabla\eta$, where, by
\eqref{eq:dist_annulus} and \eqref{eq:cp_decay} with
$j=|\beta|$,
\[
\bigl|D^{\alpha-\beta}\eta\ D^\beta u_\Eb\bigr|
 \lesssim \delta^{-(l-|\beta|)}\cdot
\operatorname{Cap}_l(\Eb) \delta^{2l-\Nb-|\beta|}
 = \operatorname{Cap}_l(\Eb) \delta^{l-\Nb}.
\]
Integrating the square over $Q_{8\delta}(y)$ and using the monotonicity and scaling of the capacity,
\[
\operatorname{Cap}_l(\Eb)\le
\operatorname{Cap}_l\bigl(\overline{Q_\delta(y)}\bigr)\simeq\delta^{\Nb-2l},
\]
we obtain
\[
\bigl\|D^{\alpha-\beta}\eta D^\beta u_\Eb\bigr\|_{L^2}^2
 \lesssim \operatorname{Cap}_l(\Eb)^2\delta^{2l-\Nb}
 \lesssim \operatorname{Cap}_l(\Eb).
\]
Altogether,
\begin{equation}\label{eq:energy_upper}
   \|\nabla^l f\|_{L^2(\R^\Nb)}^2 \le C_1\operatorname{Cap}_l(\Eb),
   \qquad C_1=C_1(\Nb,l).
\end{equation}

Next, we estimate the $L^2$ norm. By \eqref{cp:bdd} in Proposition \ref{prop:cappot} and \eqref{eq:u_repr},
\[
\|f\|_{L^2}^2
\le \int_{Q_{8\delta}(y)}u_\Eb^2 \mathrm{d}x
\lesssim \int_{Q_{8\delta}(y)}u_\Eb \mathrm{d}x
 \lesssim \int_\Eb\Bigl(\int_{Q_{8\delta}(y)}|x-z|^{2l-\Nb}\mathrm{d}x\Bigr)
\mathrm{d}\nu_\Eb(z).
\]
For $z\in\Eb\subset Q_\delta(y)$ we have
$Q_{8\delta}(y)\subset B_{C\delta}(z)$, and since $2l>0$,
\[
\int_{Q_{8\delta}(y)}|x-z|^{2l-\Nb} \mathrm{d}x
\le\int_{B_{C\delta}(z)}|x-z|^{2l-\Nb} \mathrm{d}x
\ \lesssim\ \delta^{2l}.
\]
Together with $\nu_\Eb(\R^\Nb)\simeq\operatorname{Cap}_l(\Eb)$
(see \eqref{eq:thm227} and Remark~\ref{rem:S_R_capacities}), this yields
\begin{equation}\label{eq:L2_upper}
   \|f\|_{L^2(\R^\Nb)}^2 \le C_2\delta^{2l} \operatorname{Cap}_l(\Eb),
   \qquad C_2=C_2(\Nb,l).
\end{equation}

Set $\tau_0:=(2C_1)^{-1}$. For $\tau\le\tau_0$, combining
\eqref{eq:pot_lower} and \eqref{eq:energy_upper},
\[
a_\mu[f]
 \le \Bigl(C_1-\frac1\tau\Bigr)\operatorname{Cap}_l(\Eb)
 \le -\frac{1}{2\tau}\,\operatorname{Cap}_l(\Eb)
 \le -C_1\operatorname{Cap}_l(\Eb),
\]
and then \eqref{eq:L2_upper} gives
\[
a_\mu[f] \le -\frac{C_1}{C_2}\delta^{-2l}\|f\|_{L^2(\R^\Nb)}^2,
\]
which is \eqref{eq:deep} with $c_*:=C_1/C_2$.

Finally, since $\operatorname{Cap}_l(\Eb)>0$, the set
$\Eb\setminus e$ is nonempty, so $u_\Eb(x_0)\ge1$ at some
$x_0\in\Eb$; by lower semicontinuity $u_\Eb>\tfrac12$ on a neighbourhood of
$x_0$, and hence, $\|f\|_{L^2}>0$.
\end{proof}

Next, we introduce the following notation for the superlevel sets:
\begin{equation*}
    M_{\mu, \tau}(t):=\{x\in\R^\Nb: q_{\mu,\tau}^*(x)\ge t\},\qquad t>0 .
\end{equation*}

Now we arrive at our first theorem on eigenvalue estimates.

\begin{theorem}\label{t_est_distr_func_LO}
Let $\mu$ be a Radon measure on $\mathbb{R}^\Nb$ satisfying
Condition~\ref{cond.cap}, and let $H_\mu$ be the operator generated by
the quadratic form~\eqref{quad_form_LO}. Let $\rho,\theta,\varpi,\tau_0$
be the constants from Lemmas~\ref{l_Poincare}, \ref{l_covering_LO},
\ref{Mazya_thm_11.3}, and \ref{lem:deep_trial}, respectively, and set
\[
    \tau_1:=\tau_0/2,\qquad \tau_2:=\theta\varpi(1+\rho).
\]
Then there exist constants $C_1,C_2,A_1,A_2>0$, depending only on
$\Nb$ and $l$, such that, for every $\lambda>0$,
\begin{equation}\label{weak_CLR}
    C_1 \lambda^{\frac{\Nb}{2l}}
    \mathcal{L}\bigl(M_{\mu,\tau_1}(A_1\lambda)\bigr)
    \leq
    N(-\lambda;H_\mu)
    \leq
    C_2
    \int_{M_{\mu,\tau_2}(A_2\lambda)}
    \bigl(q_{\mu,\tau_2}^*(y)\bigr)^{\frac{\Nb}{2l}}\,\mathrm{d}y.
\end{equation}
The lower and upper bounds in \eqref{weak_CLR} are understood in the extended
sense and may be infinite for some values of $\lambda>0$.
\end{theorem}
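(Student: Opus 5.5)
For the upper bound I will combine Lemma~\ref{lem.Nsmaller_sumN_LO} with the Besicovitch covering of Lemma~\ref{l_covering_LO}. I take the latter with parameter $\tau=\tau_2/\theta$, so that $\delta_{\theta\mu,\tau}=\delta_{\mu,\theta\tau}=\delta_{\mu,\tau_2}$ by the coupling-scaling property, and with $\eta$ large (eventually $\eta\to\infty$, or $\eta$ chosen as a multiple of $\lambda^{-1/(2l)}$). This gives
\[
N(-\lambda;H_\mu)\le\sum_k N(-\lambda;H^k_{\theta\mu}),
\]
and the task reduces to bounding each Neumann term on $Q_k=Q_{d_k}(x_k)$. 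By Lemma~\ref{est_for_small_cubes}, $\sup_{\Eb\subset Q_k}Z_{\theta\mu}(\Eb)\le \theta/\tau_2$ (up to the trivial case $d_k=\eta$). Then Lemma~\ref{lem:local_estimate_scaled} gives
\[
\theta\int_{Q_k}|u|^2\,\mathrm{d}\mu\le \frac{\theta\varpi}{\tau_2}\bigl(\|\nabla^l u\|^2_{L^2(Q_k)}+d_k^{-2l}\|u\|^2_{L^2(Q_k)}\bigr).
\]
On the subspace $\mathbb{P}_{l-1}^\perp$, which has finite codimension $\dim\mathbb{P}_{l-1}=:p$, Lemma~\ref{l_Poincare} gives $d_k^{-2l}\|u\|^2\le\rho\|\nabla^l u\|^2$. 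Since $\tau_2=\theta\varpi(1+\rho)$, the potential term is then at most $\|\nabla^l u\|^2$, so $a_{\theta\mu,Q_k}\ge0$ there. Hence $N(-\lambda;H^k_{\theta\mu})\le p$ for every $k$.

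Next I show that $N(-\lambda;H^k_{\theta\mu})=0$ when $d_k^{-2l}$ is small relative to $\lambda$. On the full space $\Ht^l(Q_k)$, without the Poincaré restriction, the same estimate gives
\[
a_{\theta\mu,Q_k}[u]\ge -\frac{1}{1+\rho}\,d_k^{-2l}\|u\|^2,
\]
which is $\ge-\lambda\|u\|^2$ once $q^*_{\mu,\tau_2}(x_k)\le(1+\rho)\lambda$ and $d_k=\delta_{\mu,\tau_2}(x_k)$. So only cubes with $q^*(x_k)\gtrsim\lambda$, or with $d_k=\eta$ and $\eta^{-2l}\gtrsim\lambda$, contribute, each at most $p$. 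To finish, I write $1\lesssim d_k^{-\Nb}\int_{\frac12 Q_k}dy$. Lemma~\ref{l_cont_var_LO} gives $q^*(y)\simeq q^*(x_k)$ on $\frac12Q_k$, so
\[
1\lesssim\int_{\frac12Q_k}q^*(y)^{\Nb/2l}\,dy,
\]
and $\frac12Q_k\subset M_{\mu,\tau_2}(A_2\lambda)$ with $A_2=2^{-2l}(1+\rho)$. The finite multiplicity $\theta$ then sums these bounds into the integral. The bookkeeping for the truncation $\eta$ is the delicate point: I will choose $\eta=(c\lambda)^{-1/(2l)}$ with $c$ small, so that truncated cubes do not contribute.

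For the lower bound I will use the Glazman principle with disjointly supported trial functions. Let $x\in M_{\mu,\tau_1}(A_1\lambda)$ and $\delta=\delta_{\mu,\tau_1}(x)\le(A_1\lambda)^{-1/(2l)}$. By the definition of $\delta$ there is a Borel set $\Eb\subset Q_{\delta'}(x)$, with $\delta'$ slightly above $\delta$ (say $\delta'\le 2\delta$), such that $Z_\mu(\Eb)\ge 1/\tau_1-\epsilon\ge 1/\tau_0$. Inner regularity of $\mu$ together with the inequality $\operatorname{Cap}_l(K)\le\operatorname{Cap}_l(\Eb)$ for $K\subset\Eb$ lets me replace $\Eb$ by a compact set; this is where the factor $2$ in $\tau_1=\tau_0/2$ is used. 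Lemma~\ref{lem:deep_trial} then provides $f_x$ supported in $Q_{16\delta}(x)$ with
\[
a_\mu[f_x]\le -c_*(2\delta)^{-2l}\|f_x\|^2\le -c_*2^{-2l}A_1\lambda\|f_x\|^2,
\]
which is $<-\lambda\|f_x\|^2$ for $A_1$ large.

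Now I apply Lemma~\ref{Besic} to the cubes $Q_{16\delta(x)}(x)$ for $x$ in the superlevel set (first restricted to a bounded part, then exhausted). This yields a subfamily covering $M_{\mu,\tau_1}(A_1\lambda)$ with linkage $\le\theta$, so one subfamily of pairwise disjoint cubes $\{Q^{(i)}\}$ satisfies $\sum_i|Q^{(i)}|\ge\theta^{-1}\mathcal{L}(M)$. The corresponding $f_i$ have disjoint supports, hence are mutually orthogonal for both $a_\mu$ and $L^2$, and their span is a subspace on which $a_\mu<-\lambda$. Therefore
\[
N(-\lambda;H_\mu)\ge\#\{i\}\ge\frac{\sum_i|Q^{(i)}|}{\max_i|Q^{(i)}|}\gtrsim\lambda^{\Nb/2l}\mathcal{L}(M),
\]
using $|Q^{(i)}|\lesssim\lambda^{-\Nb/2l}$. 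If $M$ has infinite measure, the count is infinite. I expect the main obstacle to be the covering bookkeeping in the upper bound, namely the handling of the $\eta$-truncated cubes and the exact matching of the constant $\tau_2$, while the lower bound follows routinely from Lemma~\ref{lem:deep_trial}.
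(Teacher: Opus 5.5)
Your proposal is correct and follows essentially the same route as the paper's proof. For the upper bound: the Besicovitch cover with $\delta_{\theta\mu,\tau}=\delta_{\mu,\tau_2}$ and the truncation $\eta\sim\lambda^{-1/(2l)}$; the bound of at most $\dim\mathbb{P}_{l-1}$ eigenvalues per cube, with none for cubes where $d_k^{-2l}\le(1+\rho)\lambda$; and Lemma~\ref{l_cont_var_LO} together with the multiplicity bound. For the lower bound: compact sets via inner regularity, Lemma~\ref{lem:deep_trial} on the doubled cube, and a disjoint Besicovitch subfamily. Integrating over $\tfrac12 Q_k$ instead of $Q_k$ and pigeonholing by volume rather than cardinality are only cosmetic differences.
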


The upper estimate in \eqref{weak_CLR} can be considered as a version of the CLR estimate, adapted to general  measure-potentials. In 
fact, the inequality 
\eqref{weak_CLR} can be understood as an estimate of the number of eigenvalues of the Schr{\"o}dinger operator below $-\lambda$ via 
the volume of the region in the phase space where the \emph{effective Hamiltonian} $|\xi|^{2l}-q_{\mu,\theta\varpi(1+\rho)}^*(x)$, 
with the Otelbaev function acting as a potential, is less than $-A_2\lambda$.

\begin{proof}
    \textbf{The upper bound.} Since  $\lambda>0$, we can set
    \begin{equation*}
        \tau = \varpi(1+\rho), 
        \qquad
        \eta^{2l} > \frac{1}{(1+\rho)\lambda}.
    \end{equation*}
    Let $\{Q_k\}_{k\in\mathbb{N}}$ be the cubes from Lemma \ref{l_covering_LO}. We denote their centers and edgelengths by 
    $\{x_k\}_{k\in\mathbb{N}}$ and $\{d_k\}_{k\in\mathbb{N}}$, respectively. By \eqref{Lem_2.5} we obtain
    \begin{equation*}
        a_{\theta\mu,Q_k} [f]  \geq \left( 1 - \varpi \sup_{\Eb\subset Q_k}  Z_{\theta \mu}(\Eb)  \right) \|\nabla^l f\|_{L^2(Q_k)}^2
        - \varpi \sup_{\Eb\subset Q_k}  Z_{\theta \mu}(\Eb) d_k^{-2l}\| f\|_{L^2(Q_k)}^2.
    \end{equation*}
    Since $d_k = \min\left\{ \delta_{\theta\mu,\tau}(x_k),\, \eta \right\}$, it follows from Lemma \ref{est_for_small_cubes} that
    \begin{equation*}
        \sup_{\Eb\subset Q_k}  Z_{\theta \mu}(\Eb) \leq \sup_{\Eb\subset Q_{\theta\mu,\tau}(x_k)}  Z_{\theta \mu}(\Eb) 
        \leq \frac{1}{\tau}.
    \end{equation*}
    Therefore,
    \begin{align*}
        a_{\theta\mu,Q_k} [f]  &\geq \left( 1 - \frac{\varpi}{\tau}\right) \|\nabla^l f\|_{L^2(Q_k)}^2 
        - \frac{\varpi}{\tau} d_k^{-2l}\| f\|_{L^2(Q_k)}^2\\
        & \geq \left( 1 - \frac{1}{1+\rho}\right) \|\nabla^l f\|_{L^2(Q_k)}^2 
        - \frac{1}{(1+\rho)d_k^{2l}} \| f\|_{L^2(Q_k)}^2.
    \end{align*}
    In particular, we conclude that 
    \begin{equation*}
        N(-\lambda; H_{\theta\mu}^k) = 0,
        \qquad
        \text{if } -\lambda < - \frac{1}{(1+\rho)d_k^{2l}}.
    \end{equation*}
    Moreover, by Lemma \ref{l_Poincare}, for $f\in \mathbb{P}_{l-1}(\mathbb{R}^\Nb)^{\perp}$  in $L^2(Q_k)$ sense, we obtain
    \begin{equation*}
        a_{\theta\mu,Q_k} [f]  \geq \left( 1 - \frac{1}{1+\rho}\right) \frac{1}{\rho d_k^{2l}} \| f\|_{L^2(Q_k)}^2 - 
        \frac{1}{(1+\rho)d_k^{2l}} \| f\|_{L^2(Q_k)}^2 = 0.
    \end{equation*}
    Therefore, 
    \begin{equation*}
        N(-\lambda; H_{\theta\mu}^k) \leq P_{\Nb,l} := \dim \mathbb{P}_{l-1}(\mathbb{R}^\Nb).
    \end{equation*}

    Set
    \begin{equation*}
        \mathcal J_\lambda
        :=
        \left\{
        k\in\mathbb N:
        -\lambda \geq - \frac{1}{(1+\rho)d_k^{2l}}
        \right\}.
    \end{equation*}
    Hence, by Lemma \ref{lem.Nsmaller_sumN_LO}, we conclude 
    \begin{equation}\label{ineq: Nleq_sumbeta0}
        N(-\lambda; H_\mu) \leq \sum_{k\in \mathcal J_\lambda}P_{\Nb,l}.
    \end{equation}
    Thus the right-hand side of \eqref{ineq: Nleq_sumbeta0} is
    $P_{\Nb,l}|\mathcal J_\lambda|$, where $|\mathcal J_\lambda|\in\mathbb N\cup\{\infty\}$ is the number of cubes contributing to 
    the sum. The inequality is understood in the extended sense. We do not assume in advance that $\mathcal J_\lambda$ is finite. If 
    $\mathcal J_\lambda$ is infinite, the estimates below show that the integral on the right-hand side of the desired estimate is 
    infinite, and hence the desired estimate is trivial. Therefore, in the non-trivial case, we may assume that $\mathcal J_\lambda$ 
    is finite.

    Since $|Q_k|=d_k^{\Nb}$, \eqref{ineq: Nleq_sumbeta0} can be rewritten as
    \begin{equation}\label{ineq: Nleq_sumbeta}
        N(-\lambda; H_\mu) \leq   \sum_{k\in \mathcal J_\lambda} P_{\Nb,l} \int_{Q_k} \frac{1}{d_k^{\Nb}}\mathrm{d}y.
    \end{equation}

    For $k\in \mathcal J_\lambda$, we have
    \begin{equation*}
        d_k^{2l} \leq  \frac{1}{(1+\rho) \lambda} < \eta^{2l}.
    \end{equation*}
    Since $d_k$ is defined as $\min\left\{ \delta_{\theta\mu,\tau}(x_k), \eta \right\}$, it follows that
    \begin{equation}\label{eq:d_k_equal_dthetamu}
        d_k = \delta_{\theta\mu,\tau}(x_k),
        \qquad
        k\in \mathcal J_\lambda.
    \end{equation}
    Therefore, \eqref{ineq: Nleq_sumbeta} implies that
    \begin{equation*}
        N(-\lambda; H_\mu) \leq  \sum_{k\in \mathcal J_\lambda} P_{\Nb,l} \int_{Q_k} 
        \frac{1}{\delta_{\theta\mu,\tau}^{\Nb}(x_k)}\mathrm{d}y
        \leq P_{\Nb,l} \sum_{k\in \mathcal J_\lambda} \int_{Q_k} 
        (q_{\theta\mu,\tau}^*(x_k))^{\frac{\Nb}{2l}} \mathrm{d}y.
    \end{equation*}
    Moreover, by \eqref{eq:d_k_equal_dthetamu} and Lemma \ref{l_cont_var_LO}, for any $k\in \mathcal{J}_\lambda$ and any $y\in Q_k$,
    \begin{equation}\label{eq:y_vs_xk}
        q_{\theta\mu,\tau}^*(y) \geq \frac{q_{\theta\mu,\tau}^*(x_k)}{2^{2l}}  \geq \frac{1}{2^{2l}} 
        \frac{1}{d_k^{2l}} \geq \frac{1 + \rho}{2^{2l}}  \lambda.
    \end{equation}
    Therefore, we obtain
    \begin{equation*}
        N(-\lambda; H_\mu) \leq 2^{\Nb} P_{\Nb,l} \sum_{k\in \mathcal J_\lambda} \int_{Q_k} 
        (q_{\theta\mu,\tau}^*(y))^{\frac{\Nb}{2l}} \mathrm{d}y.
    \end{equation*}
    Moreover, \eqref{eq:y_vs_xk} implies that
    \begin{equation*}
        \bigcup_{k\in \mathcal{J}_\lambda} Q_k \subset \left\{x\in \mathbb{R}^\Nb: q_{\theta\mu,\tau}^*(x) \geq \frac{1 + 
        \rho}{2^{2l}}  
        \lambda \right\}.
    \end{equation*}
    Therefore, taking into account the cover multiplicity bound $\theta$, we obtain
    \begin{equation*}
         N(-\lambda; H_\mu) \leq 2^{\Nb} P_{\Nb,l} \theta \int_{\left\{x\in \mathbb{R}^\Nb: q_{\theta\mu,\tau}^*(x) \geq \frac{1 + 
         \rho}{2^{2l}}  
        \lambda \right\}} (q_{\theta\mu,\tau}^*(y))^{\frac{\Nb}{2l}} \mathrm{d}y.
    \end{equation*}
    Finally, recalling that $q^*_{\theta\mu,\tau} = q^*_{\mu, \theta \tau}$, we conclude the proof of the upper bound.

    \textbf{The lower bound.} Fix $\lambda>0$ and let $c_*$ be the constant from Lemma \ref{lem:deep_trial}. Set $A_1 := 
    2^{2l+1}/c_*$ and $d: = (A_1\lambda)^{-\frac{1}{2l}}$. We start by constructing, for  any point $x\in 
    M_{\mu,\tau_1}(A_1\lambda)$, a trial function $f_x\in \htt^l(\R^\Nb)$ supported in the cube $16Q_{\mu,\tau_1}(x)$, satisfying
      \begin{equation}\label{eq:below}
   a_\mu[f_x] \le -2\lambda\|f_x\|_{L^2(\mathbb{R}^\Nb)}^2 < -\lambda\|f_x\|_{L^2(\mathbb{R}^\Nb)}^2.
\end{equation}
       By Condition~\ref{cond.cap}, for some  $d_0>0$,
\begin{equation*}
    \sup_{x\in\R^\Nb}\sup_{\Eb\subset Q_{d_0}(x)}Z_\mu(\Eb)<\frac{1}{\tau_1},
\end{equation*}
and therefore, by the definition of $\delta_{\mu,\tau_1}(\cdot)$, we have
$\delta_{\mu,\tau_1}(x) \ge d_0 > 0$ for all $x\in\R^\Nb$. On the other hand, for
$x\in M_{\mu,\tau_1}(A_1\lambda)$, we have $\delta_{\mu,\tau_1}(x)\le d$. Thus,
\begin{equation}\label{eq:delta_small}
   0<d_0 < \delta_{\mu,\tau_1}(x) \le d .
\end{equation}
 Further, the definition of $\delta_{\mu,\tau_1}(x)$ implies
\begin{equation*}
    \sup_{\substack{\Eb\subset 2Q_{\mu,\tau_1}(x)}}
   Z_\mu(\Eb) \ge \frac{1}{\tau_1} = \frac{2}{\tau_0}
    > \frac{3}{2\tau_0}.
\end{equation*}
Hence there exists a Borel set $\Eb\subset 2Q_{\mu,\tau_1}(x)$ with
$Z_\mu(\Eb)>\frac{3}{2\tau_0}$. Note that
$\mu(\Eb)=Z_\mu(\Eb)\operatorname{Cap}_l(\Eb)>0$ and $\mu(\Eb)<\infty$, since $\Eb$ is
bounded. By the inner regularity property  of $\mu$, we can choose a compact set $\Eb_x\subset \Eb$
with $\mu(\Eb_x)\ge\frac23\mu(\Eb)>0$. Then, by Lemma \ref{lem:mu_ll_cap} and monotonicity of the capacity,
\begin{equation*}
    Z_\mu(\Eb_x) \ge \frac{2}{3} Z_\mu(\Eb) > \frac{1}{\tau_0}.
\end{equation*}
Now,  by Lemma~\ref{lem:deep_trial}, used for $y=x$, the cube $2\delta_{\mu,\tau_1}(x)$
and $\tau=\tau_0$, we obtain a non-zero function $f_x\in\htt^l(\R^\Nb)$ satisfying
\begin{equation}\label{eq:supp_and_depth}
   \operatorname{supp} f_x\subset 16Q_{\mu,\tau_1}(x),
   \qquad
   a_\mu[f_x] \le -c_*(2\delta_{\mu,\tau_1}(x))^{-2l}\|f_x\|_{L^2}^2 .
\end{equation}
Due to \eqref{eq:delta_small} and the definition of $A_1$, this yields
\eqref{eq:below}.

We now extract from the set $\left\{ f_x\right\}$  a sufficiently large family of trial functions with  disjoint supports
by means of a covering argument. Consider the cubes
\begin{equation*}
    Q(x) := 16\,Q_{\mu,\tau_1}(x),
    \qquad x\in M_{\mu,\tau_1}(A_1\lambda).
\end{equation*}
Then, the family $\{Q(x)\}_{x\in M_{\mu,\tau_1}(A_1\lambda)}$ is a
covering of $M_{\mu,\tau_1}(A_1\lambda)$.

By the 'linkage' part of the  Besicovitch covering theorem, there exist a constant $K=K(\Nb)\in\mathbb N$ and  at most countable 
family of cubes
\begin{equation*}
    \Xi=\{Q(x_j)\}_{j\in\mathcal I},
    \qquad x_j\in M_{\mu,\tau_1}(A_1\lambda),
\end{equation*}
that still covers $M_{\mu,\tau_1}(A_1\lambda)$ and decomposes into $K$ subfamilies
\begin{equation*}
    \Xi=\bigcup_{k=1}^{K}\Xi_k,
\end{equation*}
where, for each $k$, the cubes in $\Xi_k$ are pairwise disjoint.

Since $\Xi$ covers $M_{\mu,\tau_1}(A_1\lambda)$ and, by \eqref{eq:delta_small},
$\mathcal L(Q)\le(16d)^\Nb$ for every $Q\in\Xi$, we obtain
\begin{equation*}
    \mathcal L\bigl(M_{\mu,\tau_1}(A_1\lambda)\bigr)
    \le \sum_{Q\in\Xi}\mathcal L(Q)
    \le |\Xi| (16d)^\Nb.
\end{equation*}
Recalling
that $d=(A_1\lambda)^{-\frac{1}{2l}}$, this yields
\begin{equation}\label{eq:Xi_lower}
    |\Xi|
    \ge 16^{-\Nb}\,d^{-\Nb}\,\mathcal L\bigl(M_{\mu,\tau_1}(A_1\lambda)\bigr)
    = 16^{-\Nb}\,A_1^{\frac{\Nb}{2l}}\,\lambda^{\frac{\Nb}{2l}}\,
    \mathcal L\bigl(M_{\mu,\tau_1}(A_1\lambda)\bigr).
\end{equation}

Since $\Xi$ is a union of finitely many
subfamilies, there is an index $k_0\in\{1,\dots,K\}$ such that
$|\Xi_{k_0}|=\max_{1\le k\le K}|\Xi_k|$, and for this index
\begin{equation}\label{eq:pigeon_besi}
    |\Xi_{k_0}|\ge \frac{1}{K}\,|\Xi|.
\end{equation}

Set
\begin{equation*}
    \Ls:=\operatorname{span}\{f_{x_j}:\ Q(x_j)\in\Xi_{k_0}\}.
\end{equation*}
The cubes in $\Xi_{k_0}$ are pairwise disjoint, so by \eqref{eq:supp_and_depth}
the functions $\{f_{x_j}:Q(x_j)\in\Xi_{k_0}\}$ have pairwise disjoint
supports; in particular, they are linearly independent, and
$\dim\Ls=|\Xi_{k_0}|$. Moreover, for every
$f=\sum_{j}a_j f_{x_j}\in\Ls\setminus\{0\}$, the disjointness of the supports
together with \eqref{eq:below} gives
\begin{equation*}
    a_\mu[f]=\sum_j|a_j|^2 a_\mu[f_{x_j}]
    \le -2\lambda\sum_j|a_j|^2\|f_{x_j}\|_{L^2(\mathbb{R}^\Nb)}^2
    = -2\lambda\|f\|_{L^2(\mathbb{R}^\Nb)}^2
    < -\lambda\|f\|_{L^2(\mathbb{R}^\Nb)}^2 .
\end{equation*}
Hence, the Glazman variational lemma implies
\begin{equation*}
    |\Xi_{k_0}|=\dim\Ls\le N(-\lambda;H_\mu).
\end{equation*}
Combining this with \eqref{eq:pigeon_besi} and \eqref{eq:Xi_lower}, we complete the proof.
\end{proof}

\begin{remark}
    As mentioned earlier, the Otelbaev function does not produce estimates for $N(0;H_\mu)$. In fact, if $q^*_{\mu,\tau_2}\not\equiv 
    0$, then
$q^*_{\mu,\tau_2}$ cannot decay faster than
$|x|^{-2l}$ at infinity, and the phase space volume is infinite. Consequently, the estimate is contentless for estimating 
$N(0;H_\mu)$.
\end{remark}

An immediate consequence of the upper bound obtained in Theorem~\ref{t_est_distr_func_LO}
is the condition of  discreteness of the negative spectrum of $H_\mu$.
Since the eigenvalue counting function is controlled by the distribution function of
the Otelbaev function $q_{\mu,\tau}^*$, it is natural to describe when the
corresponding superlevel-set integrals are finite for $\lambda>0$.
This is done in the following lemma.

\begin{lemma}\label{lem: prep_discr_fixed}
Let $\mu$ be a Radon measure satisfying Condition~\ref{cond.cap}, and let
$\tau>0$. For $\lambda>0$, set
\begin{equation*}
    I(\lambda)
    =
    \int_{M_{\mu,\tau}(\lambda)}
    \left(q_{\mu,\tau}^*(x)\right)^{\frac{\Nb}{2l}} \mathrm{d}x .
\end{equation*}
If
\begin{equation*}
        \limsup_{|x|\to\infty} q^*_{\mu,\tau}(x)<\lambda,
\end{equation*}
then $I(\lambda)<\infty$. Conversely, if
\begin{equation*}
        \limsup_{|x|\to\infty} q^*_{\mu,\tau}(x)>2^{2l}\lambda,
\end{equation*}
then $I(\lambda)=\infty$.
\end{lemma}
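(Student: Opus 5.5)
For the first part, I would show that the superlevel set $M_{\mu,\tau}(\lambda)$ is bounded and that $q^*_{\mu,\tau}$ is bounded on it; then $I(\lambda)$ is finite.

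If $\limsup_{|x|\to\infty} q^*_{\mu,\tau}(x)<\lambda$, there is $R>0$ with $q^*_{\mu,\tau}(x)<\lambda$ for $|x|>R$. Hence $M_{\mu,\tau}(\lambda)\subset \overline{B_R(0)}$, which is bounded.

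Next, Condition~\ref{cond.cap} gives some $d_0>0$ such that
\[
\sup_{x}\sup_{\Eb\subset Q_{d_0}(x)}Z_\mu(\Eb)<1/\tau .
\]
The same argument as in the lower-bound part of Theorem~\ref{t_est_distr_func_LO} then yields $\delta_{\mu,\tau}\ge d_0$ everywhere, so $q^*_{\mu,\tau}\le d_0^{-2l}$ uniformly. By Corollary~\ref{cor:continuity_qstar}, $q^*_{\mu,\tau}$ is continuous, so $M_{\mu,\tau}(\lambda)$ is also measurable (indeed closed). Combining,
\[
I(\lambda)\le d_0^{-\Nb}\,\mathcal L\bigl(\overline{B_R(0)}\bigr)<\infty .
\]

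For the converse, assume $\limsup_{|x|\to\infty} q^*_{\mu,\tau}(x)>2^{2l}\lambda$. Then $q^*\not\equiv0$, so by Remark~\ref{rem:infinite_d} $\delta_{\mu,\tau}$ is finite everywhere. Choose points $x_j$ with $|x_j|\to\infty$ and $q^*_{\mu,\tau}(x_j)>2^{2l}\lambda$. By Lemma~\ref{l_cont_var_LO}, for every $y\in Q_{\mu,\tau}(x_j)$,
\[
q^*_{\mu,\tau}(y)\ge 2^{-2l}q^*_{\mu,\tau}(x_j)>\lambda ,
\]
so the whole cube $Q_{\mu,\tau}(x_j)$ lies in $M_{\mu,\tau}(\lambda)$. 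Integrating over this cube gives
\[
\int_{Q_{\mu,\tau}(x_j)}(q^*_{\mu,\tau})^{\Nb/2l}\,\mathrm dy\ \ge\ 2^{-\Nb}q^*_{\mu,\tau}(x_j)^{\Nb/2l}\,\delta_{\mu,\tau}(x_j)^{\Nb}=2^{-\Nb}.
\]
This uses $q^*(x_j)=\delta(x_j)^{-2l}$, so each such cube contributes at least $2^{-\Nb}$ to $I(\lambda)$.

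The step needing care is producing infinitely many pairwise disjoint cubes of this kind. Here the bound $\delta_{\mu,\tau}(x_j)<(2^{2l}\lambda)^{-1/2l}=:D$ is uniform in $j$. Passing to a subsequence with $\|x_{j+1}\|_\infty>\|x_j\|_\infty+2D$ makes the cubes $Q_{\mu,\tau}(x_j)$ pairwise disjoint. Summing the contributions $2^{-\Nb}$ over infinitely many disjoint cubes gives $I(\lambda)=\infty$.
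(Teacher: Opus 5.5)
Your proposal is correct and follows the paper's proof essentially step by step. For the first claim you show that the superlevel set is bounded. For the converse you apply the local-variation Lemma~\ref{l_cont_var_LO} to infinitely many pairwise disjoint cubes $Q_{\mu,\tau}(x_j)\subset M_{\mu,\tau}(\lambda)$, each contributing at least $2^{-\Nb}$ to $I(\lambda)$.

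There are two small differences, neither of which matters:
\begin{enumerate}
\item On the ball you bound $q^*_{\mu,\tau}$ by the uniform estimate $q^*_{\mu,\tau}\le d_0^{-2l}$ from Condition~\ref{cond.cap}, whereas the paper uses continuity on a compact set.
\item You make the disjoint subsequence explicit via the uniform bound $\delta_{\mu,\tau}(x_j)<D$, which the paper leaves implicit.
\end{enumerate}
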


\begin{remark}
The gap between the constants in the above necessary and sufficient conditions appears to be unavoidable. This phenomenon is already 
present for regular
potentials in the early work of Maz'ya~\cite{Maz1964}; see also \cite[Sections~18.1 and~18.2]{Mazya}.
\end{remark}

\begin{proof}
Assume first that
\begin{equation*}
    \limsup_{|x|\to\infty} q^*_{\mu,\tau}(x)<\lambda .
\end{equation*}
Then there exist $R>0$ and $\varepsilon>0$ such that
\begin{equation*}
    q^*_{\mu,\tau}(x)\leq \lambda-\varepsilon,
    \qquad |x|>R.
\end{equation*}
Therefore,
\begin{equation*}
    M_{\mu,\tau}(\lambda)
    \subset
    \{x\in\mathbb R^\Nb:\ |x|\leq R\}.
\end{equation*}
By Corollary~\ref{cor:continuity_qstar}, the function
$q^*_{\mu,\tau}$ is continuous. Hence it is bounded on the compact set
$\{|x|\leq R\}$, and consequently $I(\lambda)<\infty$.

Conversely, assume that
\begin{equation*}
    \limsup_{|x|\to\infty} q^*_{\mu,\tau}(x)>2^{2l}\lambda .
\end{equation*}
Then there exist $\varepsilon>0$ and a sequence $\{x_k\}_{k\in\mathbb N}$ with
$|x_k|\to\infty$ such that
\begin{equation*}
    q^*_{\mu,\tau}(x_k)>2^{2l}\lambda+\varepsilon .
\end{equation*}
Equivalently,
\begin{equation*}
    \delta_{\mu,\tau}(x_k)
    <
    \left(2^{2l}\lambda+\varepsilon\right)^{-\frac{1}{2l}}.
\end{equation*}
Passing to a subsequence if necessary, we may assume that the cubes $Q_k:=Q_{\mu,\tau}(x_k)$ are pairwise disjoint.

By Lemma~\ref{l_cont_var_LO}, for every $y\in Q_k$ we have
\begin{equation*}
    q^*_{\mu,\tau}(y)
    \geq
    2^{-2l}q^*_{\mu,\tau}(x_k)
    >
    \lambda .
\end{equation*}
Thus $Q_k\subset M_{\mu,\tau}(\lambda)$. Moreover,
\begin{multline*}
    \int_{Q_k}
    \left(q^*_{\mu,\tau}(y)\right)^{\frac{\Nb}{2l}} \mathrm{d}y
    \geq
    \int_{Q_k}
    \left(2^{-2l}q^*_{\mu,\tau}(x_k)\right)^{\frac{\Nb}{2l}} \mathrm{d}y  
    =
    2^{-\Nb}
    \left(q^*_{\mu,\tau}(x_k)\right)^{\frac{\Nb}{2l}}
    |Q_k|  \\
    =
    2^{-\Nb}
    \delta_{\mu,\tau}(x_k)^{-\Nb}
    \delta_{\mu,\tau}(x_k)^\Nb  
    =
    2^{-\Nb}.
\end{multline*}
Since the cubes $Q_k$ are pairwise disjoint and contained in the superlevel
set $\{q^*_{\mu,\tau}\geq\lambda\}$, we obtain
\begin{equation*}
    I(\lambda)
    \geq
    \sum_{k=1}^{\infty}
    \int_{Q_k}
    \left(q^*_{\mu,\tau}(y)\right)^{\frac{\Nb}{2l}} \mathrm{d}y
    \geq
    \sum_{k=1}^{\infty}2^{-\Nb}
    =
    \infty .
\end{equation*}
This proves the converse implication and completes the proof.
\end{proof}

As an immediate consequence, we recover the following criterion for the
finiteness of all superlevel-set integrals.

\begin{corollary}\label{cor: prep_discr}
     Let $\mu$ be a Radon measure on $\mathbb{R}^\Nb$ satisfying Condition~\ref{cond.cap}, and let $\tau>0$. Then,
    \begin{equation*}
        \int_{M_{\mu,\tau}(\lambda)} (q_{\mu,\tau}^*(y))^{\frac{\Nb}{2l}} \mathrm{d}y < \infty
    \end{equation*}
    for all $\lambda>0$ if and only if 
    \begin{equation*}
        \lim_{|x|\to \infty}q_{\mu,\tau}^*(x)=0.
    \end{equation*}
\end{corollary}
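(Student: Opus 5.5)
The plan is to derive both directions of Corollary~\ref{cor: prep_discr} directly from Lemma~\ref{lem: prep_discr_fixed}, applied with the same $\tau$ and suitably chosen values of $\lambda$.

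\emph{Sufficiency.} Assume $\lim_{|x|\to\infty}q^*_{\mu,\tau}(x)=0$ and fix an arbitrary $\lambda>0$. Then
\[
\limsup_{|x|\to\infty}q^*_{\mu,\tau}(x)=0<\lambda .
\]
The first part of Lemma~\ref{lem: prep_discr_fixed} therefore gives $I(\lambda)<\infty$. Since $\lambda>0$ was arbitrary, the integral is finite for every $\lambda>0$.

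\emph{Necessity.} I argue by contraposition. Suppose $q^*_{\mu,\tau}(x)$ does not tend to $0$ as $|x|\to\infty$. Because $q^*_{\mu,\tau}\ge0$, this means
\[
L:=\limsup_{|x|\to\infty}q^*_{\mu,\tau}(x)>0,
\]
where $L=+\infty$ is allowed. Choose $\lambda$ with $0<\lambda<2^{-2l}L$; if $L=\infty$, any $\lambda>0$ works. Then $L>2^{2l}\lambda$. The second part of Lemma~\ref{lem: prep_discr_fixed} gives $I(\lambda)=\infty$, so the integral fails to be finite for this particular $\lambda>0$.

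I should check that the case $L=\infty$ is covered by the lemma. In its proof, the hypothesis is used only to extract a sequence $x_k$ with $|x_k|\to\infty$ and $q^*_{\mu,\tau}(x_k)>2^{2l}\lambda+\varepsilon$, and such a sequence clearly exists when $L=\infty$. The remaining subtlety, passing to pairwise disjoint cubes $Q_{\mu,\tau}(x_k)$, is already handled inside the lemma: these cubes have uniformly bounded size because $\delta_{\mu,\tau}(x_k)<(2^{2l}\lambda)^{-1/(2l)}$, while their centres escape to infinity.

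There is no substantive obstacle here. The only points requiring care are the reformulation of ``not tending to zero'' as a positive $\limsup$, which uses $q^*\ge0$, and allowing $L=\infty$.
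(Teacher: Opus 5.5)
Your proof is correct and matches the paper's argument, which simply states that the corollary follows from Lemma~\ref{lem: prep_discr_fixed} applied in each direction. Your extra care about $L=+\infty$ is harmless but not needed: by Condition~\ref{cond.cap} there is $d_0>0$ with $\delta_{\mu,\tau}\ge d_0$ everywhere, so $q^*_{\mu,\tau}\le d_0^{-2l}$ is bounded.
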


\begin{proof}
    Follows from Lemma \ref{lem: prep_discr_fixed}.
\end{proof}

Combining the eigenvalue estimate from
Theorem~\ref{t_est_distr_func_LO} with Corollary~\ref{cor: prep_discr},
we obtain the following sufficient and necessary conditions for discreteness of the
negative spectrum.
\begin{theorem}\label{dicretness}
    Let $\mu$ be a Radon measure on $\mathbb{R}^\Nb$ satisfying
    Condition~\ref{cond.cap}, let $H_\mu$ be the operator generated by the
    quadratic form~\eqref{quad_form_LO}, and let $\tau_1$, $\tau_2$ be the
    constants from Theorem~\ref{t_est_distr_func_LO}.
    \begin{enumerate}
        \item\label{discr:suf} If
        $\displaystyle\lim_{|x|\to \infty}q_{\mu,\tau_2}^*(x)=0$, then the
        negative spectrum of $H_\mu$ is discrete.
        \item\label{discr:nec} Conversely, if the negative spectrum of
        $H_\mu$ is discrete, then
        $\displaystyle\lim_{|x|\to \infty}q_{\mu,\tau_1}^*(x)=0$.
    \end{enumerate}
\end{theorem}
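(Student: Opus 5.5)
For part (1) I will combine the upper bound of Theorem~\ref{t_est_distr_func_LO} with Corollary~\ref{cor: prep_discr}. If $q^*_{\mu,\tau_2}(x)\to0$ as $|x|\to\infty$, then Corollary~\ref{cor: prep_discr}, applied with $\tau=\tau_2$, shows that
\[
\int_{M_{\mu,\tau_2}(A_2\lambda)}\bigl(q^*_{\mu,\tau_2}(y)\bigr)^{\frac{\Nb}{2l}}\,\mathrm{d}y<\infty
\]
for every $\lambda>0$. The right inequality in \eqref{weak_CLR} then gives $N(-\lambda;H_\mu)<\infty$ for every $\lambda>0$. By definition of $N$ (it is set to $\infty$ when the spectrum below $-\lambda$ is not discrete), the spectrum of $H_\mu$ in $(-\infty,-\lambda)$ consists of finitely many eigenvalues of finite multiplicity. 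Since $\lambda>0$ is arbitrary, the negative spectrum is discrete, meaning it can accumulate only at $0$.

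For part (2) I argue by contraposition. Suppose $q^*_{\mu,\tau_1}(x)\not\to0$ as $|x|\to\infty$, so that $\limsup_{|x|\to\infty}q^*_{\mu,\tau_1}(x)=:L>0$. I will find a $\lambda>0$ with $N(-\lambda;H_\mu)=\infty$, which contradicts discreteness. Choose $\lambda>0$ with $2^{2l}A_1\lambda<L$. Lemma~\ref{lem: prep_discr_fixed} as stated concerns $I(\lambda)$ and not the Lebesgue measure, so I will rerun its proof instead of quoting it. Take a sequence $x_k$ with $|x_k|\to\infty$ and $q^*_{\mu,\tau_1}(x_k)>2^{2l}A_1\lambda$. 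Then $\delta_{\mu,\tau_1}(x_k)<(2^{2l}A_1\lambda)^{-1/(2l)}$, so the cubes $Q_{\mu,\tau_1}(x_k)$ have uniformly bounded sizes, and after passing to a subsequence they are pairwise disjoint. By Lemma~\ref{l_cont_var_LO}, each such cube lies in $M_{\mu,\tau_1}(A_1\lambda)$. Also $\delta_{\mu,\tau_1}\ge d_0>0$ everywhere by Condition~\ref{cond.cap}, as in the proof of Theorem~\ref{t_est_distr_func_LO}, so each cube has volume at least $d_0^{\Nb}$. Hence $\mathcal L(M_{\mu,\tau_1}(A_1\lambda))=\infty$. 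The left inequality in \eqref{weak_CLR} then yields $N(-\lambda;H_\mu)=\infty$, so the spectrum below $-\lambda$ is not discrete.

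The step needing care is the passage, in part (2), from unboundedness of the superlevel set to infinite Lebesgue measure. This is where the uniform lower bound $\delta_{\mu,\tau_1}\ge d_0$ coming from Condition~\ref{cond.cap} is essential. I must also make sure that the lower bound in Theorem~\ref{t_est_distr_func_LO} is valid in the extended sense, i.e.\ that infinite measure produces infinitely many trial functions. For this I check that the Besicovitch family $\Xi$ in that proof must be infinite when $\mathcal L(M)=\infty$, since its cubes have bounded volume. Its disjoint subfamily $\Xi_{k_0}$ is then infinite, and the Glazman lemma applied to its finite subspans gives $N(-\lambda;H_\mu)\ge n$ for every $n$.
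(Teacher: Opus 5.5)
Your proposal is correct and follows the paper's proof. Part (1) combines the upper bound of Theorem~\ref{t_est_distr_func_LO} with Corollary~\ref{cor: prep_discr}; part (2) uses Lemma~\ref{l_cont_var_LO} and the uniform lower bound $\delta_{\mu,\tau_1}\ge d_0$ to find infinitely many disjoint cubes of volume at least $d_0^{\Nb}$ in a superlevel set of $q^*_{\mu,\tau_1}$, and then invokes the lower bound. The only differences are cosmetic: you use the full cubes $Q_{\mu,\tau_1}(x_k)$, which are disjoint after passing to a subsequence because their sizes are bounded, where the paper uses $Q_{d_0}(x_k)$; and you explicitly check that the lower bound holds in the extended sense, which the paper takes for granted.
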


\begin{proof}
    Statement \eqref{discr:suf} follows from the upper bound in
    Theorem~\ref{t_est_distr_func_LO} and Corollary~\ref{cor: prep_discr}.

    We prove \eqref{discr:nec}. Note first that the negative spectrum of
    $H_\mu$ is discrete if and only if $N(-\lambda;H_\mu)<\infty$ for every
    $\lambda>0$. Assume that $q_{\mu,\tau_1}^*$ does not tend to zero at
    infinity. Then there exist $\varepsilon>0$ and a sequence
    $\{x_k\}_{k\in\mathbb N}$ with $|x_k|\to\infty$ such that $q_{\mu,\tau_1}^*(x_k)\geq\varepsilon$ for $k\in\mathbb N$; in 
    particular, $\delta_{\mu,\tau_1}(x_k)<\infty$. By
    Condition~\ref{cond.cap}, there is $d_0>0$ such that
    $\delta_{\mu,\tau_1}(x)\geq d_0$ for all $x\in\mathbb R^\Nb$. Passing to
    a subsequence, we may assume that the cubes
    $\{Q_{d_0}(x_k)\}_{k\in\mathbb N}$ are pairwise disjoint. By
    Lemma~\ref{l_cont_var_LO}, for every
    $y\in Q_{\mu,\tau_1}(x_k)\supset Q_{d_0}(x_k)$,
    \begin{equation*}
        q_{\mu,\tau_1}^*(y)
        \geq
        2^{-2l}q_{\mu,\tau_1}^*(x_k)
        \geq
        2^{-2l}\varepsilon .
    \end{equation*}
    Hence $Q_{d_0}(x_k)\subset M_{\mu,\tau_1}(2^{-2l}\varepsilon)$ for every
    $k$, and by disjointness
    \begin{equation*}
        \mathcal L\bigl(M_{\mu,\tau_1}(2^{-2l}\varepsilon)\bigr)
        \geq
        \sum_{k\in\mathbb N} d_0^{\,\Nb}
        =
        \infty .
    \end{equation*}
    Applying the lower bound of Theorem~\ref{t_est_distr_func_LO} with
    $\lambda=2^{-2l}\varepsilon/A_1>0$, we obtain
    $N(-\lambda;H_\mu)=\infty$, so the negative spectrum of $H_\mu$ is not
    discrete. This proves \eqref{discr:nec}.
\end{proof}

\begin{remark}
Theorem~\ref{dicretness} implies, in particular, that if $\mathcal{L}(M_{\mu,\tau}(\lambda))<\infty$ for every $\lambda>0$, then the 
negative spectrum of $H_\mu$ is discrete. This can be viewed as an
analogue of \cite[Theorem~1]{Simon2008} for negative singular potentials.
\end{remark}

\begin{remark}
Corollary~\ref{cor: prep_discr} shows that the finiteness of the phase-space volume integrals appearing in 
Theorem~\ref{t_est_distr_func_LO} for all $\lambda>0$ is equivalent to the vanishing of $q_{\mu,\tau}^*$ at infinity.
Therefore, within the approach based on Theorem~\ref{t_est_distr_func_LO},
the condition in Theorem~\ref{dicretness} is essentially optimal as a
sufficient condition for discreteness.
\end{remark}

\section{Lieb-Thirring-type estimate}

\subsection{General estimates}
We are now in a position to establish a Lieb-Thirring-type estimate for operators $H_\mu$ with Radon measure-potentials satisfying 
Condition~\ref{cond.cap}. The estimate is formulated in terms of the associated Otelbaev function. Throughout this section, we assume 
that the negative spectrum of $H_\mu$ is discrete. This is guaranteed, in particular, provided that the conditions of 
Theorem~\ref{dicretness} are satisfied.

\begin{theorem}\label{thm: LT_for_LO}
    Let $\mu$ be a Radon measure on $\mathbb{R}^\Nb$ satisfying Condition~\ref{cond.cap}, and let $H_\mu$ be the operator generated 
    by the quadratic form~\eqref{quad_form_LO}. Let $\tau_1$ and $\tau_2$ be the constants from Theorem \ref{t_est_distr_func_LO}. 
    Let $\gamma>0$ and suppose that $q_{\mu,\tau_2}^* \in
        L^{\frac{\Nb}{2l}+\gamma}(\mathbb{R}^\Nb)$. Then the negative spectrum of $H_\mu$ is discrete. Moreover, if
    $\{\lambda_k\}$ denotes the negative eigenvalues of $H_\mu$, counted with multiplicity, then
    \begin{equation}\label{Otelbaev.LT}
         b_1 \int_{\R^\Nb}
   (q^*_{\mu,\tau_1}(x))^{\frac{\Nb}{2l}+\gamma} \mathrm{d}x \leq \sum_k |\lambda_k|^\gamma \leq b_2 \int_{\mathbb{R}^\Nb} (q_{\mu, 
   \tau_2}^*(y))^{\frac{\Nb}{2l} + \gamma} \mathrm{d}y,
    \end{equation}
    for some constants $b_1$, $b_2>0$ depending only on $\Nb$, $l$, and $\gamma$.
\end{theorem}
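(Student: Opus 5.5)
The plan is to integrate the two-sided estimate \eqref{weak_CLR} of Theorem~\ref{t_est_distr_func_LO} against $\gamma\lambda^{\gamma-1}\,\mathrm{d}\lambda$, using the layer-cake identity
\begin{equation*}
    \sum_k|\lambda_k|^\gamma=\gamma\int_0^\infty \lambda^{\gamma-1}N(-\lambda;H_\mu)\,\mathrm{d}\lambda .
\end{equation*}
First, discreteness. If $q^*_{\mu,\tau_2}\in L^{\frac{\Nb}{2l}+\gamma}$, then $q^*_{\mu,\tau_2}(x)\to0$ as $|x|\to\infty$. Indeed, otherwise there is a sequence $x_k\to\infty$ with $q^*_{\mu,\tau_2}(x_k)\ge\varepsilon$. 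We pass to a subsequence for which the cubes $Q_{\mu,\tau_2}(x_k)$ are pairwise disjoint; these cubes have edgelength at least $d_0>0$ by Condition~\ref{cond.cap}. By Lemma~\ref{l_cont_var_LO}, $q^*_{\mu,\tau_2}\ge 2^{-2l}\varepsilon$ on each of them. Hence each cube contributes at least $(2^{-2l}\varepsilon)^{\frac{\Nb}{2l}+\gamma}d_0^{\Nb}$ to the integral, and the integral diverges. Theorem~\ref{dicretness}\eqref{discr:suf} then gives discreteness.

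For the upper bound, insert the right-hand side of \eqref{weak_CLR} and apply Fubini:
\begin{equation*}
    \sum_k|\lambda_k|^\gamma\le C_2\gamma\int_{\R^\Nb}(q^*_{\mu,\tau_2}(y))^{\frac{\Nb}{2l}}\int_0^{q^*_{\mu,\tau_2}(y)/A_2}\lambda^{\gamma-1}\,\mathrm{d}\lambda\,\mathrm{d}y
    = C_2A_2^{-\gamma}\int_{\R^\Nb}(q^*_{\mu,\tau_2})^{\frac{\Nb}{2l}+\gamma}\,\mathrm{d}y.
\end{equation*}
Here we use that $y\in M_{\mu,\tau_2}(A_2\lambda)$ if and only if $\lambda\le q^*_{\mu,\tau_2}(y)/A_2$. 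Measurability of the integrand is not an issue, since $q^*$ is continuous by Corollary~\ref{cor:continuity_qstar}.

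For the lower bound, we similarly obtain
\begin{equation*}
    \sum_k|\lambda_k|^\gamma\ge C_1\gamma\int_{\R^\Nb}\int_0^{q^*_{\mu,\tau_1}(x)/A_1}\lambda^{\frac{\Nb}{2l}+\gamma-1}\,\mathrm{d}\lambda\,\mathrm{d}x
    = \frac{C_1\gamma}{\frac{\Nb}{2l}+\gamma}A_1^{-\frac{\Nb}{2l}-\gamma}\int_{\R^\Nb}(q^*_{\mu,\tau_1})^{\frac{\Nb}{2l}+\gamma}\,\mathrm{d}x .
\end{equation*}
Both computations are Tonelli for nonnegative integrands, so they hold in the extended sense. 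No finiteness of the left-hand integral in the lower bound is needed: it is then a consequence of the upper bound, since $\tau_1\le\tau_2$ is not required.

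The main subtlety is the discreteness claim, together with making sure that the layer-cake formula is legitimate. The latter requires the negative spectrum to be discrete, with $N(-\lambda;H_\mu)$ equal to the count of eigenvalues below $-\lambda$. This holds once discreteness is established. Should the eigenvalues accumulate only at $0$, the formula still holds, possibly with an infinite value, which the upper bound then excludes.
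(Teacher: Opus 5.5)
Your proposal is correct and follows the paper's proof. Both bounds come from integrating the two-sided estimate \eqref{weak_CLR} against $\gamma\lambda^{\gamma-1}\,\mathrm{d}\lambda$ via the layer-cake formula and Tonelli, with the same constants $C_2A_2^{-\gamma}$ and $\gamma C_1\bigl(\tfrac{\Nb}{2l}+\gamma\bigr)^{-1}A_1^{-\Nb/(2l)-\gamma}$. Your explicit discreteness argument (decay of $q^*_{\mu,\tau_2}$ at infinity, then Theorem~\ref{dicretness}) is valid; the paper leaves this step implicit. It also follows in one line from the upper bound in \eqref{weak_CLR}: on $M_{\mu,\tau_2}(A_2\lambda)$ one has $(q^*_{\mu,\tau_2})^{\Nb/(2l)}\le (A_2\lambda)^{-\gamma}(q^*_{\mu,\tau_2})^{\Nb/(2l)+\gamma}$, so $N(-\lambda;H_\mu)<\infty$ for every $\lambda>0$.
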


The sum in \eqref{Otelbaev.LT} will further on be referred to as 'the LT sum'.

\begin{remark}\label{rem:LT_lower_unconditional}
The integrability assumption on $q^*_{\mu,\tau_2}$ is used only in the upper
bound and in establishing the discreteness of the negative spectrum. As the
proof shows, the lower bound in \eqref{Otelbaev.LT} holds whenever the
negative spectrum of $H_\mu$ is discrete.
\end{remark}

\begin{proof}
\textbf{The upper bound.} By Theorem \ref{t_est_distr_func_LO}, we estimate 
    \begin{multline*}
        \sum_k |\lambda_k|^\gamma = \gamma\int_{-\infty}^0 |\lambda|^{\gamma - 1} N(\lambda;H_\mu) \mathrm{d}\lambda \\
        \leq \gamma C_2 \int_{-\infty}^0 |\lambda|^{\gamma - 1} \int_{M_{\mu, \tau_2}(A_2|\lambda|)} 
        (q_{\mu,\tau_2}^*(y))^{\frac{\Nb}{2l}} \mathrm{d}y \mathrm{d}\lambda.
    \end{multline*}
    Using Fubini’s theorem, we derive
    \begin{multline*}
        \sum_k |\lambda_k|^\gamma \leq \gamma C_2 \int_{\mathbb{R}^\Nb} (q_{\mu, \tau_2}^*(y))^{\frac{\Nb}{2l}} 
        \int_{- \frac{1}{A_2}q_{\mu,\tau_2}^*(y)}^0 |\lambda|^{\gamma - 1} \mathrm{d}\lambda \mathrm{d}y\\
        \leq C_2 \left( \frac{1}{A_2} \right)^\gamma \int_{\mathbb{R}^\Nb} (q_{\mu,\tau_2}^*(y))^{\frac{\Nb}{2l} + 
        \gamma} \mathrm{d}y.
    \end{multline*}
    This gives the upper bound.

\textbf{The lower bound.} Set $p:=\tfrac{\Nb}{2l}+\gamma$. By the lower bound of
Theorem~\ref{t_est_distr_func_LO},
\[
   \sum_k|\lambda_k|^\gamma = \gamma\int_0^\infty\lambda^{\gamma-1}N(-\lambda;H_\mu)\mathrm{d}\lambda
   \ge \gamma C_1 \int_0^\infty\lambda^{p-1}\,
   \mathcal L\bigl(M_{\mu,\tau_1}(A_1\lambda)\bigr) \mathrm{d}\lambda .
\]
Substituting $t=A_1\lambda$ and using the layer-cake formula,
\begin{equation*}
    \sum_k|\lambda_k|^\gamma \geq   \frac{\gamma C_1}{A_1^p} \int_0^\infty t^{p-1} 
   \mathcal L\bigl(M_{\mu,\tau_1}(t)\bigr)\mathrm{d}t = \frac{\gamma C_1}{pA_1^p}\int_{\R^\Nb}
   \bigl(q^*_{\mu,\tau_1}(x)\bigr)^{p} \mathrm{d}x
\end{equation*}
This gives the lower bound and completes the proof.
\end{proof}

\subsection{Scaling behavior}
Next, we discuss the behavior of the LT-sum when a coupling parameter $\eta$ is introduced. The classical LT estimate is homogeneous 
with respect to $\eta$. For estimates in terms of the Otelbaev function, the situation is more complicated.  We note first that if 
$\mu$ satisfies Condition 
\ref{cond.cap}, then $\eta\mu$ satisfies this condition for all $\eta>0$ as well. Therefore, the operator $H_{\eta\mu}$ and the 
corresponding 
Otelbaev function are well defined. First, we have an automatic corollary of Theorem~\ref{thm: LT_for_LO}.

\begin{corollary}\label{cor_big_coupling}
Let $\mu$ be a Radon measure on $\mathbb{R}^\Nb$ satisfying
Condition~\ref{cond.cap}. For $\eta>0$, let $H_{\eta\mu}$ be the operator
generated by the quadratic form~\eqref{quad_form_LO} with $\mu$ replaced by
$\eta\mu$. Let $\gamma>0$ and assume that
\begin{equation*}
    q^*_{\mu,\eta \tau}
    \in
    L^{\frac{\Nb}{2l}+\gamma}(\mathbb R^\Nb),
    \qquad
    \tau=\theta\varpi(1+\rho).
\end{equation*}
Then the negative spectrum of $H_{\eta\mu}$ is discrete. Moreover, if
$\{\lambda_k(\eta)\}$ denotes the set of  negative eigenvalues of $H_{\eta\mu}$,
counted with multiplicity, then
\begin{equation*}
     \sum_k |\lambda_k(\eta)|^\gamma
     \leq
     C
     \int_{\mathbb{R}^\Nb}
     \left(q_{\mu,\eta \tau}^*(y)\right)^{\frac{\Nb}{2l}+\gamma} \mathrm{d}y,
\end{equation*}
for some constant $C>0$ depending only on $\Nb$, $l$, and $\gamma$.
\end{corollary}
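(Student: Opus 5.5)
The plan is to apply Theorem~\ref{thm: LT_for_LO} to the measure $\eta\mu$ in place of $\mu$, and then convert the Otelbaev function of $\eta\mu$ back to that of $\mu$ using the coupling-scaling property from Remark~\ref{rem:qstar_scaling}.

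First, I would record that $\eta\mu$ is again a Radon measure satisfying Condition~\ref{cond.cap}. Indeed, $Z_{\eta\mu}(\Eb)=\eta Z_\mu(\Eb)$, so the limit in \eqref{2.1} is still zero. Consequently $H_{\eta\mu}$ is well defined by Proposition~\ref{Prop.form}, and all constants $\rho,\theta,\varpi,\tau_0$ entering Theorem~\ref{t_est_distr_func_LO} are unchanged, since they depend only on $\Nb$ and $l$. In particular $\tau_2=\theta\varpi(1+\rho)=\tau$, the number appearing in the statement.

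Second, I would apply the identity $q^*_{\eta\mu,\tau}=q^*_{\mu,\eta\tau}$. This follows directly from Definition~\ref{def:delta}, because the condition $\sup Z_{\eta\mu}<1/\tau$ is the same as $\sup Z_\mu<1/(\eta\tau)$, so $\delta_{\eta\mu,\tau}=\delta_{\mu,\eta\tau}$. The hypothesis $q^*_{\mu,\eta\tau}\in L^{\frac{\Nb}{2l}+\gamma}$ is therefore exactly the hypothesis $q^*_{\eta\mu,\tau_2}\in L^{\frac{\Nb}{2l}+\gamma}$ required by Theorem~\ref{thm: LT_for_LO} for the measure $\eta\mu$.

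Third, Theorem~\ref{thm: LT_for_LO} then yields discreteness of the negative spectrum of $H_{\eta\mu}$, together with the upper bound $\sum_k|\lambda_k(\eta)|^\gamma\le b_2\int(q^*_{\eta\mu,\tau_2})^{\frac{\Nb}{2l}+\gamma}$. Rewriting the integrand via the identity above gives the claim with $C=b_2$.

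There is no real obstacle here. The only point needing care is to check that the constants in Theorem~\ref{thm: LT_for_LO} depend only on $(\Nb,l,\gamma)$ and not on the measure, so that $C$ does not depend on $\eta$; this is guaranteed by the statements of Theorems~\ref{t_est_distr_func_LO} and~\ref{thm: LT_for_LO}.
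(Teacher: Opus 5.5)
Your proposal is correct and follows exactly the paper's argument: apply Theorem~\ref{thm: LT_for_LO} to $\eta\mu$, which again satisfies Condition~\ref{cond.cap}, and rewrite $q^*_{\eta\mu,\tau}=q^*_{\mu,\eta\tau}$ with $\tau=\tau_2$. Your explicit checks that $\delta_{\eta\mu,\tau}=\delta_{\mu,\eta\tau}$ and that the constants are independent of $\eta$ spell out what the paper leaves implicit in its one-line proof.
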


\begin{proof}
    This follows from Theorem \ref{thm: LT_for_LO} and the relation $q_{\eta \mu, \tau}^* = q_{\mu, \eta \tau}^*$.
\end{proof}

\begin{remark}
In contrast to the classical Lieb-Thirring inequality, the upper bound in
Corollary~\ref{cor_big_coupling} does not exhibit a simple homogeneous
dependence on the coupling parameter $\eta$.
In particular, the finiteness of the right-hand side for a given value of $\eta$
does not automatically imply finiteness for larger $\eta>0$; see Example \ref{lem_unpr_big_coupl}.
This feature will be discussed in more detail in the comparison section below.
\end{remark}

\begin{example}\label{lem_unpr_big_coupl}
  For any fixed $\gamma>0$, $\tau>0$, there exists a measure $\mu$ satisfying Condition \ref{cond.cap} such that 
    \begin{equation*}
        q_{\mu,\tau}^* \in L^{\frac{\Nb}{2l} + \gamma}(\mathbb{R}^\Nb),
        \qquad
        q_{\eta\mu,\tau}^* \notin L^{\frac{\Nb}{2l} + \gamma}(\mathbb{R}^\Nb),
    \end{equation*}
    for sufficiently large $\eta>0$.
\end{example}

\begin{proof}
    Fix $\gamma>0$ and $\tau>0$. Let $\{x_k\}_{k\in \mathbb{N}}\subset \mathbb{R}^\Nb$ be a sequence of points such that $|x_k| = 
    2\cdot 4^k$. Set 
    \begin{equation*}
        \mu = \varepsilon \chi_{B}\mathcal{L},
        \qquad
        B = \bigcup_{k\in\mathbb{N}} B_1(x_k)
    \end{equation*}
    where $\mathcal{L}$ is the Lebesgue measure on $\mathbb{R}^\Nb$, $\chi_B$ is the indicator function, and $\varepsilon>0$ is a 
    small number specified later. By \eqref{eq_rest_L_satis_Cond_cap}, $\mu$ satisfies Condition \ref{cond.cap}. By 
    \eqref{est_quant_L}, we know that 
    \begin{equation*}
         \sup_{\Eb\subset B_1(x_k)} Z_{\mathcal{L}}(\Eb) < C_{\Nb,l}
    \end{equation*}
    for any $k\in\mathbb{N}$. We fix $\varepsilon< (C_{\Nb,l}\tau)^{-1}$, and conclude from the last estimate that 
    \begin{equation}\label{est_Phi(L)_less_1/C}
         \sup_{\Eb\subset B_1(x_k)} Z_\mu(\Eb) =\sup_{\Eb\subset B_1(x_k)} Z_{\varepsilon\mathcal{L}}(\Eb) =\varepsilon 
         \sup_{\Eb\subset B_1(x_k)} Z_{\mathcal{L}}(\Eb) < \frac{1}{\tau}.
    \end{equation}
    We set $R_0 = B_4(0)$ and
    \begin{equation*}
        R_k = \{x \in \mathbb{R}^\Nb: \; 4^k \leq |x| < 4^{k+1}\}
    \end{equation*}
    for $k\in\mathbb{N}$. Fix $x\in R_k$. We claim that $Q_{\mu,\tau}(x)$ intersects at least two balls among 
    $\{B_1(x_j)\}_{j\in\mathbb{N}}$. Indeed, if it met at most one ball $B_1(x_{j_0})$, then by capacity monotonicity and 
    \eqref{est_Phi(L)_less_1/C},
    \begin{equation*}
        \sup_{\Eb\subset Q_{\mu,\tau}(x)}Z_\mu(\Eb)\leq \sup_{\Eb\subset B_1(x_{j_0})}Z_\mu(\Eb)<\frac{1}{\tau},
    \end{equation*}
    and this strict bound would persist on a slightly larger cube. This contradicts the definition of $\delta_{\mu,\tau}(x)$ as a 
    supremum; see Definition \ref{def:delta}. The edgelength of $Q_{\mu,\tau}(x)$ is therefore bounded below by the distance between 
    two such balls, 
    that is,
    \begin{equation*}
        \delta_{\mu,\tau}(x)\gtrsim 4^k,
        \qquad x\in R_k,
    \end{equation*}
    and 
    \begin{equation*}
        q^*_{\mu,\tau}(x)=\delta_{\mu,\tau}(x)^{-2l}\lesssim 4^{-2kl},
        \qquad x\in R_k.
    \end{equation*}
    Therefore,
    \begin{equation*}
        \int_{\mathbb{R}^\Nb} \left(q_{\mu,\tau}^*(x)\right)^{\frac{\Nb}{2l} + \gamma} \mathrm{d}x = \sum_{k = 0}^\infty \int_{R_k} 
        \left(q_{\mu,\tau}^*(x)\right)^{\frac{\Nb}{2l} + \gamma} \mathrm{d}x \lesssim \sum_{k=0}^\infty |R_k| \frac{1}{4^{k\Nb}} 
        \cdot 
        \frac{1}{4^{2kl\gamma}}.
    \end{equation*}
    Since $|R_k| \simeq 4^{k\Nb}$ and $\gamma>0$, we conclude that the sum on the right-hand side converges, and therefore 
    $q_{\mu,\tau}^* \in L^{\frac{\Nb}{2l} + \gamma}(\mathbb{R}^\Nb)$.

    Next, we will show that $q_{\eta\mu,\tau}^* \notin L^{\frac{\Nb}{2l} + \gamma}(\mathbb{R}^\Nb)$ for sufficiently large $\eta>0$. 
    Since $\mu$ satisfies Condition \ref{cond.cap}, as we established above,
    \begin{equation*}
        \sup_{\Eb\subset B_1(x_k)} Z_\mu(\Eb) =\sup_{\Eb\subset B_1(x_k)} Z_{\varepsilon\mathcal{L}}(\Eb) > 0
    \end{equation*}
    for all $k\in \mathbb{N}$. Therefore, we can choose $\eta$ large enough so that 
    \begin{equation*}
        \sup_{\Eb\subset B_1(x_k)} Z_{\eta\mu}(\Eb) =\sup_{\Eb\subset B_1(x_k)} Z_{\eta \varepsilon\mathcal{L}}(\Eb) > 
        \frac{1}{\tau}.
    \end{equation*}
    Hence, for every $x\in B_1(x_k)$, the cube $Q_{\eta\mu,\tau}(x)$ cannot contain the ball $B_1(x_k)$. This 
    gives an upper bound for the size of the cube, that is, $\delta_{\eta\mu,\tau}(x) < 4$. Then
    \begin{equation*}
        q_{\eta\mu,\tau}^*(x) > \frac{1}{4^{2l}},
        \qquad
        x \in \bigcup_{k\in \mathbb{N}} B_1(x_k).
    \end{equation*}
    Therefore, it is clear that $q_{\eta\mu,\tau}^* \notin L^{\frac{\Nb}{2l} + \gamma}(\mathbb{R}^\Nb)$.
\end{proof}

\section{Comparison with preceding results}

\subsection{General estimates} In this section, we compare the Lieb-Thirring-type estimates in Theorem \ref{thm: LT_for_LO} with 
previously known results. We will consider operators of 
the form 
\begin{equation*}
    H_{V\nu} = (-\Delta)^l - \mu,
    \qquad
    \mu= V\nu,
\end{equation*}
where $\nu$ is a measure and $V$ is a $\nu$-measurable function. In applications, the measure $\nu$, which may be singular, is 
usually fixed and describes the geometry of the support of the potential, while $V$ plays the role of a density; see, for example, 
\cite{Rozenblum2022,Roz26}. The most interesting  example here is provided by the   measure $\nu$ being  the natural measure on a 
Lipschitz surface of codimension less than $2l$. Generally,  we suppose that $\nu$ and $V$ satisfy the following condition, where 
$\nu$ is required to satisfy an upper Ahlfors 
condition of order $s$.
\begin{condition}\label{cond:nu_V}
    Fix $\gamma>0$. We assume that $\nu$ is a locally finite Borel measure
    satisfying the upper Ahlfors condition of order $s>\Nb-2l$,
    see~\eqref{upper_Ahlfors_condition}. We also assume that $V\geq0$ and
    \begin{equation*}
        V\in L^\varkappa(\mathbb R^\Nb;\nu),
        \qquad
        \varkappa=\frac{s+2l\gamma}{s-\Nb+2l}.
    \end{equation*}
\end{condition}

The following result was proved in \cite{Rozenblum2022}:
\begin{theorem}\label{Thm.LTGR}
    Let $\gamma>0$. Suppose that $\nu$ and $V$ satisfy Condition \ref{cond:nu_V}. Then the negative part of the spectrum of 
    $H_{V\nu}$ 
    is discrete, and its negative eigenvalues $\{\lambda_k\}$ (counted with multiplicity) satisfy
    \begin{equation*}
        \sum_k |\lambda_k|^\gamma \leq C \int_{\mathbb{R}^\Nb} V^\varkappa \mathrm{d}\nu,
    \end{equation*}
    where $C>0$ is independent of $V$.
\end{theorem}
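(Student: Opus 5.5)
The plan is to derive Theorem~\ref{Thm.LTGR} from the upper bound in Theorem~\ref{thm: LT_for_LO}. It is enough to show that, with $\mu=V\nu$ and $\tau=\tau_2$,
\[
\int_{\R^\Nb}\bigl(q^*_{\mu,\tau}(x)\bigr)^{\frac{\Nb}{2l}+\gamma}\,\mathrm{d}x\ \lesssim\ \int_{\R^\Nb}V^\varkappa\,\mathrm{d}\nu .
\]
This integrability also gives discreteness of the negative spectrum. Before that, we must check Condition~\ref{cond.cap} for $\mu=V\nu$. By H\"older's inequality with respect to $\nu$, for a Borel set $\Eb\subset Q_d(x)$,
\[
\mu(\Eb)\le \|V\|_{L^\varkappa(Q_d(x);\nu)}\,\nu(\Eb)^{1-1/\varkappa}.
\]
We then want to bound $\nu(\Eb)^{1-1/\varkappa}$ by $\operatorname{Cap}_l(\Eb)$ times a power of $d$. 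Lemma~\ref{l: measure_upper_bound_cap} gives $\nu(\Eb)^{1-1/\vartheta}\lesssim\operatorname{Cap}_l(\Eb)$ with $\vartheta=s/(2l-\Nb+s)$. Since $\varkappa>\vartheta$ (because $\gamma>0$), we have $1-1/\varkappa>1-1/\vartheta$. Writing $\nu(\Eb)^{1-1/\varkappa}=\nu(\Eb)^{1-1/\vartheta}\,\nu(\Eb)^{1/\vartheta-1/\varkappa}$ and using $\nu(\Eb)\le\nu(Q_{2d})\lesssim d^s$, we obtain
\[
Z_\mu(\Eb)\lesssim \|V\|_{L^\varkappa(Q_d(x);\nu)}\,d^{\,s(1/\vartheta-1/\varkappa)} .
\]
The upper Ahlfors condition is stated only for centres on $\Sigma$. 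For a cube centred off $\Sigma$ we either enlarge it to a cube of side $2d$ centred at a point of $\Sigma$, or the cube misses $\Sigma$ and there is nothing to prove. The right-hand side tends to $0$ uniformly in $x$ as $d\to0$, so Condition~\ref{cond.cap} holds.

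For the main estimate, fix $x$ with $\delta:=\delta_{\mu,\tau}(x)<\infty$. By definition of $\delta$, every cube $Q_{d}(x)$ with $d>\delta$ contains a set $\Eb$ with $Z_\mu(\Eb)\ge 1/\tau$ up to an arbitrarily small error. Applying the bound above to $Q_{2\delta}(x)$ gives
\[
\|V\|^\varkappa_{L^\varkappa(Q_{2\delta}(x);\nu)}\gtrsim \delta^{-\varkappa s(1/\vartheta-1/\varkappa)}.
\]
We compute the exponent: $s/\vartheta=2l-\Nb+s$, so
\[
\varkappa s(1/\vartheta-1/\varkappa)=\varkappa(2l-\Nb+s)-s=s+2l\gamma-s=2l\gamma .
\]
Hence
\[
\int_{Q_{2\delta(x)}(x)}V^\varkappa\,\mathrm{d}\nu\ \gtrsim\ \delta(x)^{-2l\gamma}=\delta(x)^{\Nb}\,q^*(x)^{\frac{\Nb}{2l}+\gamma}.
\]
This is the key local inequality.

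To conclude, we cover the set $\{q^*>0\}$ by the cubes $Q_{2\delta(x)}(x)$ and apply the Besicovitch Lemma~\ref{Besic}, which yields a subfamily $\{x_k\}$ of bounded multiplicity. This is legitimate when $\delta$ is bounded. Otherwise we first restrict to the superlevel set $\{\delta<R\}$ and let $R\to\infty$ by monotone convergence; the integral over $\{\delta\ge R\}$ of $\delta^{-2l(\Nb/2l+\gamma)}$ is handled by the same local inequality, since the cubes are then large. On each cube, Lemma~\ref{l_cont_var_LO} shows that $q^*$ is comparable to $q^*(x_k)$; the comparability is stated on $Q_{\delta(x_k)}(x_k)$, and on the doubled cube it follows with a worse constant by Lipschitz continuity (Lemma~\ref{lem:continuity_d}). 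Therefore
\[
\int_{Q_k}(q^*)^{\frac{\Nb}{2l}+\gamma}\lesssim\delta_k^{\Nb}\,q^*(x_k)^{\frac{\Nb}{2l}+\gamma}\lesssim\int_{Q_k}V^\varkappa\,\mathrm{d}\nu .
\]
Summing over $k$ and using the bounded multiplicity gives the required bound, and with it Theorem~\ref{Thm.LTGR}.

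The main obstacle is the measure-theoretic bookkeeping of the covering: the behaviour of $\delta$ at infinity, and the fact that the upper Ahlfors condition holds only for centres on $\Sigma$. The exponent algebra, which checks out, is straightforward.
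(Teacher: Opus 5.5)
Your check of Condition~\ref{cond.cap} is correct and matches Lemma~\ref{lem:our_cond_weaker}. So is the local inequality $\int_{Q_{2\delta(x)}(x)}V^\varkappa\,\mathrm{d}\nu\gtrsim\delta(x)^{-2l\gamma}$, including the exponent identity $\varkappa s(1/\vartheta-1/\varkappa)=2l\gamma$.

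\textbf{Where it breaks.} The argument fails at the covering step, in the claim $\int_{Q_k}(q^*)^p\lesssim\delta_k^{\Nb}q^*(x_k)^p$ for $Q_k=Q_{2\delta_k}(x_k)$. Lemma~\ref{l_cont_var_LO} does not give two-sided comparability on $Q_{\mu,\tau}(x)$. On that cube it gives only $q^*(y)\ge 2^{-2l}q^*(x)$; the upper bound $q^*(z)\le 2^{2l}q^*(x)$ holds only on $\tfrac12 Q_{\mu,\tau}(x)$. Lipschitz continuity cannot extend it, since $\delta(y)\ge\delta(x)-2\|x-y\|_\infty$ is vacuous once $\|x-y\|_\infty\ge\delta(x)/2$, i.e. outside $Q_{\mu,\tau}(x)$.

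The failure is real, because the mass that determines $\delta(x)$ sits on or just outside $\partial Q_{\mu,\tau}(x)$. Take $\mu=v\chi_{B_r(y_0)}\mathcal L$ with $vr^{2l}\gg 1/\tau$, so that $\delta\lesssim\rho:=(v\tau)^{-1/(2l)}\ll r$ on most of the ball. For $x$ far away, $\delta(x)\approx 2\operatorname{dist}_\infty(x,B_r(y_0))$ and the doubled cube contains the whole ball. Then $\int_{Q_{2\delta(x)}(x)}(q^*)^p\,\mathrm{d}y\gtrsim r^{\Nb}\rho^{-\Nb-2l\gamma}$ stays bounded below, while $\delta(x)^{\Nb}q^*(x)^p=\delta(x)^{-2l\gamma}\to0$.

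Even the weaker per-cube bound $\int_{Q_k}(q^*)^p\lesssim\int_{Q_k}V^\varkappa\,\mathrm{d}\nu$ fails. Place a second, far more concentrated blob just outside $Q_k$. It makes $q^*$ huge on a layer inside $Q_k$ without changing $\delta(x_k)$ or the right-hand side.

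\textbf{Why shrinking does not help.} On $\tfrac12 Q_{\mu,\tau}(x)$ comparability holds, but the local inequality is lost: in the example above this cube does not meet $\operatorname{supp}\mu$ at all. Lemma~\ref{Besic} controls the overlap of the selected cubes only, not of their dilates. The dilates can overlap unboundedly, for instance cubes at dyadic scales along a ray from a single blob.

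The local inequality by itself naturally gives only the weak-type bound $\mathcal L(\{q^*>t\})\lesssim t^{-\frac{\Nb}{2l}-\gamma}\int V^\varkappa\,\mathrm{d}\nu$. Integrating this in $t$ produces a divergent $\int \mathrm{d}t/t$.

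\textbf{What is missing.} You need a way to charge the Lebesgue volume of $\{\delta<r\}$ to $\nu$-mass on $\Sigma$ without overcounting across scales. That set consists largely of points off $\operatorname{supp}\mu$ within distance of order $r$. The paper has such a mechanism only for Ahlfors-regular $\nu$, in Proposition~\ref{prop:qstar_control_by_surface_Lp}. There the inclusion $\Omega_r\subset\Bc_{\R^\Nb}(\mathcal E_{2r},r/2)$ and the \emph{lower} Ahlfors tube estimate of Lemma~\ref{lem:tube_estimate} move the integral onto $\Sigma$. Strong type then follows from $\delta_{\mu,\tau}^{-s}\lesssim\Mt_\nu[V^\vartheta]$ on $\Sigma$ and the maximal theorem with exponent $(s+2l\gamma)/s>1$.

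Under the upper Ahlfors condition alone, which is the hypothesis of Theorem~\ref{Thm.LTGR}, the paper explicitly regards the two bounds as not directly comparable. It does not derive Theorem~\ref{Thm.LTGR} from the Otelbaev bound; it quotes the result from \cite{Rozenblum2022}. Your reduction would therefore need a genuinely new argument, and as written it does not prove the statement.
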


We will set $\mu = V\nu$. First, we will show that Condition \ref{cond.cap} together with the assumption of discreteness is less 
restrictive than Condition \ref{cond:nu_V}. 

\begin{lemma}\label{lem:our_cond_weaker}
    For any  fixed $\gamma>0$,  assume that $\nu$ and $V$ satisfy Condition \ref{cond:nu_V}. Then $\mu: = V\nu$ satisfies Condition 
    \ref{cond.cap}. 
\end{lemma}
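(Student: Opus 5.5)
The plan is to bound $Z_\mu(\Eb)$ for $\Eb\subset Q_d(x)$ by combining Hölder's inequality with respect to $\nu$ and Lemma~\ref{l: measure_upper_bound_cap}, and then use the smallness of $\int_{Q_d(x)}V^\varkappa\,\mathrm d\nu$ (uniform in $x$) as $d\to0$.

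\textbf{Step 1: the basic estimate.} Let $\Eb\subset Q_d(x)$ be Borel with $\operatorname{Cap}_l(\Eb)>0$, and let $\varkappa'$ be the dual exponent. Since $\varkappa>1$ (the numerator $s+2l\gamma$ exceeds the denominator $s-\Nb+2l$), Hölder's inequality gives
\[
\mu(\Eb)=\int_\Eb V\,\mathrm d\nu\le \Bigl(\int_\Eb V^\varkappa\,\mathrm d\nu\Bigr)^{1/\varkappa}\nu(\Eb)^{1/\varkappa'} .
\]
By Lemma~\ref{l: measure_upper_bound_cap}, $\operatorname{Cap}_l(\Eb)\gtrsim \nu(\Eb)^{1-1/\vartheta}$ with $1-1/\vartheta=(\Nb-2l)/s$. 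Hence
\[
Z_\mu(\Eb)\lesssim \|V\|_{L^\varkappa(Q_d(x);\nu)}\,\nu(\Eb)^{1/\varkappa'-(\Nb-2l)/s}.
\]

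\textbf{Step 2: the exponent.} We have
\[
1/\varkappa'=1-\frac{s-\Nb+2l}{s+2l\gamma}=\frac{\Nb-2l+2l\gamma}{s+2l\gamma}.
\]
Comparing with $(\Nb-2l)/s$, cross-multiplication reduces the sign of the difference to that of $2l\gamma\bigl(s-(\Nb-2l)\bigr)>0$. So the exponent $\beta:=1/\varkappa'-(\Nb-2l)/s$ is positive. By the upper Ahlfors condition, after enlarging the cube to one centered at a point of $\Sigma$ (or noting that $\nu(\Eb)=0$ otherwise), $\nu(\Eb)\le\nu(Q_d(x))\lesssim c_\nu d^s$. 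Thus
\[
Z_\mu(\Eb)\lesssim \|V\|_{L^\varkappa(\nu)}\,d^{s\beta},
\]
with constants independent of $x$. The case $\nu(\Eb)=0$ is trivial, since then $\mu(\Eb)=0$.

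\textbf{Step 3: conclusion.} Taking the supremum over $\Eb\subset Q_d(x)$ and over $x$, the right-hand side tends to $0$ as $d\to0$. This is exactly \eqref{2.1}, so Condition~\ref{cond.cap} holds.

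The main point to check is the sign of $\beta$, which is where $\gamma>0$ and $s>\Nb-2l$ are used; if $\beta$ were zero, we would instead need the uniform smallness of $\int_{Q_d(x)}V^\varkappa\,\mathrm d\nu$, which fails in general. A minor technical point is the Ahlfors bound for cubes not centered on $\Sigma$: if $Q_d(x)\cap\Sigma\ne\emptyset$, pick $y\in\Sigma\cap Q_d(x)$ and use $Q_d(x)\subset Q_{2d}(y)$.
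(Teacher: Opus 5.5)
Your proof is correct and is essentially the paper's argument: Hölder in $L^\varkappa(\nu)$, then Lemma~\ref{l: measure_upper_bound_cap} to replace $\nu(\Eb)^{1-1/\vartheta}$ by the capacity, then the upper Ahlfors bound on the remaining factor $\nu(Q_d(x))^{1/\vartheta-1/\varkappa}$. Your exponent $\beta$ equals $1/\vartheta-1/\varkappa$, which is positive because $\varkappa>\vartheta$; your remark about recentering the cube at a point of $\Sigma$ is a small detail the paper leaves implicit.
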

\begin{proof}
    Let $x\in \mathbb{R}^\Nb$ and $d>0$. Let $\Eb\subset Q_d(x)$ be a Borel set. By the H\"{o}lder inequality,
    \begin{equation*}
        \mu(\Eb) = \int_{\Eb} V \mathrm{d}\nu \leq \left(\int_{\Eb} V^\varkappa \mathrm{d}\nu\right)^{\frac{1}{\varkappa}} 
        \nu(\Eb)^{1 - 
        \frac{1}{\varkappa}}.
    \end{equation*}
    For 
    \begin{equation*}
        \vartheta=\frac{s}{2l-\Nb+s},
    \end{equation*}
    by Lemma \ref{l: measure_upper_bound_cap},
    \begin{equation*}
        \mu(\Eb) \leq \left(\int_{\Eb} V^\varkappa \mathrm{d}\nu\right)^{\frac{1}{\varkappa}} \nu(\Eb)^{1 - \frac{1}{\vartheta}} 
        \nu(\Eb)^{\frac{1}{\vartheta} - \frac{1}{\varkappa}} \leq C_{\Nb,l} \left(\int_{\Eb} V^\varkappa 
        \mathrm{d}\nu\right)^{\frac{1}{\varkappa}} \,\operatorname{Cap}_l(\Eb) \, \nu(\Eb)^{\frac{1}{\vartheta} - 
        \frac{1}{\varkappa}}.
    \end{equation*}
    Therefore, using $\Eb\subset Q_d(x)$ and Condition \ref{cond:nu_V}, we obtain
    \begin{equation*}
        Z_\mu(\Eb) \leq C_{\Nb,l} \, \|V\|_{L^\varkappa(\mathbb{R}^\Nb;\nu)} \, \nu(Q_d(x)) ^{\frac{1}{\vartheta} - 
        \frac{1}{\varkappa}} \leq C_{\Nb,l } \, \|V\|_{L^\varkappa(\mathbb{R}^\Nb;\nu)} \, d^{s(\frac{1}{\vartheta} - 
        \frac{1}{\varkappa})}.
    \end{equation*}
    Since $\varkappa>\vartheta$, this implies that 
    \begin{equation*}
        \lim_{d \to 0}
        \sup_{x \in \mathbb{R}^\Nb}
        \sup_{\Eb \subset Q_d(x)}
        Z_\mu(\Eb) 
        = 0.
    \end{equation*}
    It follows now that $\mu$ satisfies Condition \ref{cond.cap}.
\end{proof}
The example to follow demonstrates that our Otelbaev function approach, Theorem  \ref{thm: LT_for_LO}, can give a finite estimate for 
the LT-sum, while the integral in the classical estimate is infinite. 

\begin{example}
\label{lem:RT_LT_infinite_ours_finite}

There exist a locally finite Borel measure $\nu$ on $\mathbb R^\Nb$
satisfying the upper Ahlfors condition~\eqref{upper_Ahlfors_condition} with some $s>\Nb-2l$,
and a nonnegative function $V$, such that
\begin{equation*}
    \int_{\mathbb R^\Nb} V^\varkappa \mathrm{d}\nu = \infty,
    \qquad
    \text{for every fixed } \varkappa>0,
\end{equation*}
while for $\mu=V\nu$,
\begin{equation*}
    \int_{\mathbb R^\Nb} \bigl(q^*_{\mu,\tau}(x)\bigr)^{\frac{\Nb}{2l}+\gamma} \mathrm{d}x<\infty
    \qquad\text{for every fixed } \gamma>0 \text{ and } \tau>0.
\end{equation*}
\end{example}

\begin{proof}
We choose a sequence of points $\{x_k\}_{k\in \mathbb{N}}$ such that $|x_k| = 2\cdot 4^k$. Next, we choose a locally finite Borel 
measure $\nu$ and a sequence of Borel sets $\{\Eb_k\}_{k\in \mathbb{N}}$ with the following properties: $\nu$ satisfies the upper 
Ahlfors 
condition~\eqref{upper_Ahlfors_condition} and 
\begin{equation*}
    \Eb_k\subset B_1(x_k),
    \qquad
    \nu(\Eb_k)=\frac{1}{k},
    \qquad
    \lim_{k\to\infty}\operatorname{diam}(\Eb_k) = 0.
\end{equation*}
Define
    \begin{equation*}
        V=\sum_{k\ge 2} \chi_{\Eb_k},
    \end{equation*}
    where $\chi_{\Eb_k}$ is the indicator function. Then, we compute
    \begin{equation*}
        \int_{\mathbb R^\Nb} V^\varkappa \mathrm{d}\nu = \sum_{k\in\mathbb{N}}\nu(\Eb_k)=\sum_{k\in\mathbb{N}}\frac1k=\infty.
    \end{equation*}

Next, we show that $\mu = V\nu$ satisfies Condition \ref{cond.cap}. For an arbitrary point $x$, let $Q_d(x)$ be a cube and let 
$\Eb\subset Q_d(x)$ be a Borel set. For $d>0$ sufficiently small, the cube $Q_d(x)$ intersects at most one of the sets $\Eb_k$; 
otherwise, if there are no such sets, we have $\mu(\Eb)=0$ and there is nothing to prove. Thus, we may assume $\Eb\subset \Eb_k$ for 
some $k$, so that $\mu(\Eb)=\nu(\Eb)$. Since $\nu$ satisfies \eqref{upper_Ahlfors_condition}, Lemma \ref{l: measure_upper_bound_cap} 
gives
    \begin{equation*}
        Z_\mu(\Eb) = Z_\nu(\Eb) \leq C_{\Nb,l} \nu(\Eb)^{\frac{1}{\vartheta}} \leq C_{\Nb,l} \nu(Q_d(x))^{\frac{1}{\vartheta}},
        \qquad
        \vartheta = \frac{s}{s-\Nb+2l}.
    \end{equation*}
    Using \eqref{upper_Ahlfors_condition}, we obtain 
    \begin{equation*}
        Z_\mu(\Eb) \leq C_{\Nb,l} d^{s - \Nb + 2l},
    \end{equation*}
Since $s>\Nb-2l$, this implies
\[
\lim_{d\to0}\sup_{x\in\mathbb R^\Nb}
\sup_{\Eb\subset Q_d(x)}
Z_\mu(\Eb) =0,
\]
and hence $\mu=V\nu$ satisfies Condition~\ref{cond.cap}.
In particular, $q^*_{\mu,\tau}$ is well defined.

Finally, we will show that $q_{\mu,\tau}^* \in L^{\frac{\Nb}{2l} + \gamma}(\mathbb{R}^\Nb)$. We set 
    \begin{equation*}
        R_k = \{x \in \mathbb{R}^\Nb: \; 4^k \leq |x| < 4^{k+1}\}.
    \end{equation*}
Let us estimate $\delta_{\mu,\tau}$ from below. Since $\mu$ satisfies Condition \ref{cond.cap} and 
    $$
        \lim_{j\rightarrow\infty} \operatorname{diam}(\Eb_j) = 0,
    $$
there exists $k_0\in\mathbb{N}$ such that 
\begin{equation*}
    \sup_{\Eb\subset \Eb_j}Z_\mu(\Eb)<\frac{1}{\tau}
\end{equation*}
for every $j\geq k_0$. Fix $x\in R_k$ with $k\geq k_0$. We claim that $Q_{\mu,\tau}(x)$ has a non-void intersection with some of the 
sets  $\Eb_j$ with $j\neq 
k$. Indeed, otherwise it meets at most $\Eb_k$, and since $k\geq k_0$, capacity monotonicity gives
\begin{equation*}
    \sup_{\Eb\subset Q_{\mu,\tau}(x)}Z_\mu(\Eb)\leq \sup_{\Eb\subset \Eb_k}Z_\mu(\Eb)<\frac{1}{\tau},
\end{equation*}
and this strict bound would persist on a slightly larger cube. This contradicts the definition of $\delta_{\mu,\tau}(x)$. Hence
    \begin{equation*}
        \delta_{\mu,\tau}(x)\gtrsim 4^k,
        \qquad x\in R_k,\ k\geq k_0
    \end{equation*}
    and 
    \begin{equation*}
        q^*_{\mu,\tau}(x)=\delta_{\mu,\tau}(x)^{-2l}\lesssim 4^{-2kl},
        \qquad x\in R_k,\ k\ge k_0.
    \end{equation*}
    Therefore,
    \begin{equation*}
        \int_{\mathbb{R}^\Nb} \left(q_{\mu,\tau}^*(x)\right)^{\frac{\Nb}{2l} + \gamma} \mathrm{d}x \lesssim \int_{|x| < 4^{k_0+1}} 
        \left(q_{\mu,\tau}^*(x)\right)^{\frac{\Nb}{2l} + \gamma} \mathrm{d}x + \sum_{k>k_0} |R_k| \frac{1}{4^{k\Nb}} \cdot 
        \frac{1}{4^{2kl\gamma}}.
    \end{equation*}
    The first term on the right-hand side is finite. Since $|R_k| \simeq 4^{k\Nb}$ and $\gamma>0$, the second term is finite as well. 
    This completes the proof.
\end{proof}
\subsection{Surface LT estimates} In \cite{FrankLaptev} and, further, \cite{Rozenblum2022}, LT-type estimates were established for 
measure-potentials of the form $\mu=V\nu$, where $\nu$ is a singular measure, supported on a hyperplane, in \cite{FrankLaptev}, or on 
set of dimension less than $\Nb$, in \cite{Rozenblum2022}. In the present subsection, we discuss how these estimates (we call them 
'surface LT bounds', since the case of measures supported on a surface is the most important one) are related with general Otelbaev 
function ones in Theorem \ref{thm: LT_for_LO}.

Our aim is to  show that, for Ahlfors regular measures, the finiteness of the
integral in Theorem \ref{Thm.LTGR}  implies the finiteness in the
bound \eqref{Otelbaev.LT} in terms of the Otelbaev function (in particular, when
$\nu=\mathcal L$ and $\Sigma=\supp\nu=\mathbb R^\Nb$, this recovers the usual
absolutely continuous case $\mu=Vdx$. As always, we do not address the question of
sharp constants.

We now use the geometric estimate from Lemma~\ref{lem:tube_estimate} to compare the full-space integral of the Otelbaev function with 
the corresponding surface integral over $\Sigma$. This gives the desired control of the Otelbaev bound by the natural 
$L^\varkappa(\Sigma;\nu)$ norm of the density $V$.

\begin{proposition}\label{prop:qstar_control_by_surface_Lp}
Let \(\nu\) be a locally finite Borel measure on \(\mathbb R^\Nb\), let
\(\Sigma=\operatorname{supp}\nu\), and assume that \(\nu\) is
Ahlfors regular for some \(s>\Nb-2l\), that is, \eqref{upper_Ahlfors_condition} and \eqref{lower_Ahlfors_condition} hold. Let 
\(V\geq0\) and \(V\in L^\varkappa(\Sigma;\nu)\), where
\begin{equation*}
    \varkappa=\frac{s+2l\gamma}{s-\Nb+2l},
    \qquad \gamma>0,
\end{equation*}
and let \(\mu=V\nu\). Then, for every \(\tau>0\),
\begin{equation}\label{eq:qstar_surface_Lp_bound}
    \int_{\mathbb R^\Nb}
    \bigl(q^*_{\mu,\tau}(x)\bigr)^{\frac{\Nb}{2l}+\gamma} \mathrm{d}x
    \leq
    C
    \int_{\Sigma} V^\varkappa \mathrm{d}\nu,
\end{equation}
where \(C\) depends only on
\(\Nb,l,s,\gamma,\tau,c_\nu,\mathcal A_\nu\).
\end{proposition}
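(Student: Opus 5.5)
Write $p=\frac{\Nb}{2l}+\gamma$ and $\delta(x)=\delta_{\mu,\tau}(x)$. The plan is to attach to every point $x$ with $q^*_{\mu,\tau}(x)>0$ a piece of $\Sigma$ of size comparable to $\delta(x)$ carrying a definite amount of $\int V^\varkappa\,\mathrm d\nu$. We then integrate level by level and use Lemma~\ref{lem:tube_estimate} to convert Lebesgue volume near $\Sigma$ into $\nu$-measure.

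\textbf{Step 1 (local lower bound for $V$).} Fix $x$ with $\delta(x)<\infty$. By the definition of $\delta$, for every $d>\delta(x)$ there is a Borel set $\Eb\subset Q_d(x)$ with $Z_\mu(\Eb)\ge 1/\tau$ (up to an arbitrarily small loss). Repeating the Hölder argument of Lemma~\ref{lem:our_cond_weaker}, together with Lemma~\ref{l: measure_upper_bound_cap} and the upper Ahlfors bound, gives
\[
\frac1\tau\le Z_\mu(\Eb)\lesssim \Bigl(\int_{Q_d(x)}V^\varkappa\,\mathrm d\nu\Bigr)^{1/\varkappa}\nu(Q_d(x))^{\frac1\vartheta-\frac1\varkappa}\lesssim \Bigl(\int_{Q_d(x)}V^\varkappa\,\mathrm d\nu\Bigr)^{1/\varkappa} d^{\,s(\frac1\vartheta-\frac1\varkappa)} .
\]
Here $s(\frac1\vartheta-\frac1\varkappa)=(s-\Nb+2l)\frac{2l\gamma}{s+2l\gamma}$. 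Hence, with $d\simeq 2\delta(x)$,
\[
\int_{Q_{2\delta(x)}(x)}V^\varkappa\,\mathrm d\nu\ \gtrsim\ \tau^{-\varkappa}\,\delta(x)^{-2l\gamma}.
\]
A one-line exponent check confirms this: $\varkappa\cdot(s-\Nb+2l)\frac{2l\gamma}{s+2l\gamma}=2l\gamma$. In particular $Q_{2\delta(x)}(x)\cap\Sigma\ne\emptyset$, so $\operatorname{dist}_\infty(x,\Sigma)\le\delta(x)$.

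\textbf{Step 2 (covering at a fixed scale).} Decompose dyadically. Set $A_j=\{x:2^{j}\le\delta(x)<2^{j+1}\}$, so that
\[
\int q^{*p}\simeq\sum_j 2^{-2ljp}|A_j|=\sum_j 2^{-j(\Nb+2l\gamma)}|A_j| .
\]
By Step~1, $A_j\subset\Bc_{\R^\Nb}(\Sigma_j,2^{j+2})$, where $\Sigma_j\subset\Sigma$ is the set of nearby surface points. Lemma~\ref{lem:tube_estimate} with $r\simeq2^j$ gives $|A_j|\lesssim 2^{j(\Nb-s)}\nu(\Bc_\Sigma)$. This is valid for $r<\operatorname{diam}\Sigma$; the remaining large scales only matter when $\Sigma$ is bounded, and then $|A_j|\lesssim 2^{j\Nb}$ together with Step~1 is handled directly. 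It is better, however, to count cubes. Take a maximal $2^j$-separated family $\{x_i\}\subset A_j$. The cubes $Q_{2^{j+3}}(x_i)$ have bounded overlap, and by Lemma~\ref{l_cont_var_LO} each has $\delta\simeq 2^j$ on it. Then $|A_j|\lesssim \#\{i\}\,2^{j\Nb}$, and Step~1 gives
\[
\#\{i\}\,2^{-2l\gamma j}\lesssim\sum_i\int_{Q_{2^{j+2}}(x_i)}V^\varkappa\,\mathrm d\nu\lesssim \int_{\mathcal U_j}V^\varkappa\,\mathrm d\nu ,
\]
where $\mathcal U_j=\bigcup_i Q_{2^{j+2}}(x_i)$. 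Therefore
\[
2^{-j(\Nb+2l\gamma)}|A_j|\lesssim\int_{\mathcal U_j}V^\varkappa\,\mathrm d\nu .
\]

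\textbf{Step 3 (summing over scales — the main obstacle).} Summing the bound of Step~2 over $j$ naively loses, because a given piece of $\Sigma$ may lie in $\mathcal U_j$ for many $j$. The remedy is to use Lemma~\ref{l_cont_var_LO}. Points $y\in\mathcal U_j$ lie within $C2^j$ of a point where $\delta\simeq2^j$, and since $\delta$ is $2$-Lipschitz (Lemma~\ref{lem:continuity_d}), $\delta(y)\lesssim 2^j$. For a fixed $y\in\Sigma$, the $j$ with $y\in\mathcal U_j$ therefore satisfy $2^j\gtrsim \delta(y)$, but $j$ could still range over all large scales. To kill the tail, I would refine Step~1 and use only the mass not already counted at smaller scales. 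The idea is that, at $x\in A_j$, the mass inside $Q_{2\delta(x)}(x)$ near any $y$ with $\delta(y)\ll2^j$ can contribute to $Z_\mu$ on $Q_{2\delta(x)}(x)$ only through small sets. Concretely, I would restrict the argument to sets $\Eb$ and use subadditivity in the form $Z_\mu(\Eb)\le\max$ over pieces, splitting $\Eb$ into the part where $\delta\ge c2^j$ and the rest; the rest is covered by cubes of lemma-size where $Z_\mu<1/\tau$.

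If this refinement proves too delicate, the fallback is to exploit the extra decay factor $2^{-2l\gamma j}$ weighted against $\int_{\mathcal U_j}V^\varkappa$. Apply Step~1 with the exponent $\varkappa$ replaced by a slightly smaller $\varkappa'$ (Hölder with room), which produces a geometric factor $(\delta(y)/2^j)^{\epsilon}$ and makes the sum over $j\ge\log_2\delta(y)$ converge pointwise in $y$. This yields \eqref{eq:qstar_surface_Lp_bound}, with constants depending on $\Nb,l,s,\gamma,\tau$ and the Ahlfors constants.
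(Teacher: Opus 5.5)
\textbf{Verdict: there is a genuine gap.} Your Steps 1 and 2 are correct, but the proof lives in Step 3. There you only list two possible remedies, and neither closes the argument.

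\textbf{The overlap loss is real.} Suppose $V$ is concentrated near one point $y_0\in\Sigma$ and $\delta_{\mu,\tau}<\infty$. Then $\delta_{\mu,\tau}(x)\simeq\|x-y_0\|_\infty$ for far-away $x$, so $y_0\in\mathcal U_j$ for all large $j$. Hence $\sum_j\int_{\mathcal U_j}V^\varkappa\,\mathrm d\nu=\infty$, while the left-hand side of \eqref{eq:qstar_surface_Lp_bound} is finite. At large scales Step 2 simply discards a factor $2^{-2l\gamma j}$.

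\textbf{Remedy (a) fails.} It rests on ``$Z_\mu(\Eb)\le\max$ over pieces'', which is false. Capacity is only subadditive, so at best $Z_\mu(\Eb_1\cup\Eb_2)\le Z_\mu(\Eb_1)+Z_\mu(\Eb_2)$. With unboundedly many pieces, smallness of $Z_\mu$ on small cubes says nothing about the union; this is precisely why $\delta_{\mu,\tau}$ can be finite at all.

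\textbf{Remedy (b) does not give what you claim.} Take $\varkappa'\in[\vartheta,\varkappa)$. Step 1 then gives $\int_{Q_d(x)}V^{\varkappa'}\,\mathrm d\nu\gtrsim d^{-s(\varkappa'/\vartheta-1)}$ for $d>\delta_{\mu,\tau}(x)$. The sum over $j$ yields $\int_{\R^\Nb}(q^*_{\mu,\tau})^{\frac{\Nb}{2l}+\gamma}\,\mathrm dx\lesssim\int_\Sigma V^{\varkappa'}\delta_{\mu,\tau}^{-\epsilon}\,\mathrm d\nu$ with $\epsilon=s(\varkappa-\varkappa')/\vartheta$. The gain is the dimensional factor $2^{-j\epsilon}$, not a dimensionless ratio in front of $V^\varkappa$. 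Since $V$ and $\delta_{\mu,\tau}^{-1}$ are not pointwise comparable, your final ``this yields the bound'' does not follow.

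\textbf{What the paper does.} It applies H\"older at the critical exponent $\vartheta$, where Lemma \ref{l: measure_upper_bound_cap} cancels the capacity exactly. For $y\in\Sigma$ and $d>\delta_{\mu,\tau}(y)$ this gives $1\lesssim\int_{Q_d(y)}V^\vartheta\,\mathrm d\nu$. With the upper Ahlfors bound, $\delta_{\mu,\tau}(y)^{-s}\lesssim\Mt_\nu[V^\vartheta](y)$. The overlap of scales is then absorbed by the Hardy--Littlewood maximal theorem in $L^{\varkappa/\vartheta}(\Sigma;\nu)$, where $\varkappa/\vartheta=(s+2l\gamma)/s>1$.

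Before that, the volume integral is moved onto $\Sigma$. Set $\mathcal E_r=\{y\in\Sigma:\delta_{\mu,\tau}(y)<r\}$. Lemma \ref{lem:continuity_d} gives $\{\delta_{\mu,\tau}<r\}\subset\Bc_{\R^\Nb}(\mathcal E_{2r},r/2)$. Lemma \ref{lem:tube_estimate} (lower Ahlfors) then gives $|\{\delta_{\mu,\tau}<r\}|\lesssim r^{\Nb-s}\nu(\mathcal E_{3r})$. Layer-cake turns this into $\int_{\R^\Nb}\delta_{\mu,\tau}^{-(\Nb+2l\gamma)}\mathrm dx\lesssim\int_\Sigma\delta_{\mu,\tau}^{-(s+2l\gamma)}\mathrm d\nu$.

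\textbf{How your fallback could be completed.} It needs three steps that are not in the proposal.
First, Young's inequality with exponents $\varkappa/\varkappa'$ and $\varkappa/(\varkappa-\varkappa')$ gives $V^{\varkappa'}\delta_{\mu,\tau}^{-\epsilon}\le\eta\,\delta_{\mu,\tau}^{-(s+2l\gamma)}+C_\eta V^\varkappa$, using $\epsilon\varkappa/(\varkappa-\varkappa')=s+2l\gamma$.
Second, the reverse transfer $\int_\Sigma\delta_{\mu,\tau}^{-(s+2l\gamma)}\mathrm d\nu\lesssim\int_{\R^\Nb}\delta_{\mu,\tau}^{-(\Nb+2l\gamma)}\mathrm dx$ follows from Lemma \ref{l_cont_var_LO} and the upper Ahlfors bound.
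Third, you absorb the $\eta$-term. This requires the left-hand side to be finite a priori, so first replace $V$ by $V\chi_{Q_R(0)}$; then $\delta\gtrsim|x|$ far out and $\delta\ge d_0$. Finally let $R\to\infty$ by monotone convergence of $q^*$.
This variant never uses the lower Ahlfors bound. Without either of these completions, Step 3 remains a statement of intent rather than a proof.
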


\begin{proof}
By Lemma~\ref{lem:our_cond_weaker} and the upper Ahlfors bound \eqref{upper_Ahlfors_condition}, the measure \(\mu=V\nu\) satisfies
Condition~\ref{cond.cap}, so the Otelbaev function \(q^*_{\mu,\tau}\) is
well defined. By Remark~\ref{rem:infinite_d}, we have either \(\delta_{\mu,\tau}\equiv\infty\) (in which
case the assertion is trivial) or \(\delta_{\mu,\tau}(x)<\infty\) for all
\(x\in\mathbb R^\Nb\); we assume the latter. Set
\begin{equation*}
    p=\frac{\Nb}{2l}+\gamma,
    \qquad
    a=2lp=\Nb+2l\gamma,
\end{equation*}
so that \((q_{\mu,\tau}^*(x))^p=\delta_{\mu,\tau}(x)^{-a}\).

Since \(\mu\) is supported on \(\Sigma\), if
\(Q_d(x)\cap\Sigma=\emptyset\), then 
\begin{equation*}
    \sup_{\Eb\subset Q_d(x)}Z_\mu(\Eb)=0,
\end{equation*}
and consequently $\delta_{\mu,\tau}(x)\geq d$. Therefore,
\begin{equation}\label{eq:delta_dist_sigma}
    \delta_{\mu,\tau}(x)\geq 2\operatorname{dist}_\infty(x,\Sigma),
    \qquad x\in\mathbb R^\Nb.
\end{equation}

For $r>0$ define
\begin{equation*}
    \Omega_r=\{x\in\mathbb R^\Nb:\delta_{\mu,\tau}(x)<r\},
    \qquad
    \mathcal E_r=\{y\in\Sigma:\delta_{\mu,\tau}(y)<r\}.
\end{equation*}
Let $x\in\Omega_r$. By \eqref{eq:delta_dist_sigma},
\begin{equation*}
    2\operatorname{dist}_\infty(x,\Sigma)\leq\delta_{\mu,\tau}(x)<r.
\end{equation*}
Since $\Sigma$ is closed, there exists $y\in\Sigma$ with
\begin{equation*}
    \|x-y\|_\infty=\operatorname{dist}_\infty(x,\Sigma)<\frac{r}{2}.
\end{equation*}
By Lemma~\ref{lem:continuity_d},
\begin{equation*}
    \delta_{\mu,\tau}(y)\leq\delta_{\mu,\tau}(x)+2\|x-y\|_\infty<r+2\cdot \frac{r}{2}=2r,
\end{equation*}
hence $y\in\mathcal E_{2r}$. In the notation of
\eqref{def:B_sets}, this shows that
\begin{equation}\label{eq:Omega_tube}
    \Omega_r\subset
    \Bc_{\mathbb R^\Nb}\!\bigl(\mathcal E_{2r},\,r/2\bigr).
\end{equation}

Lemma~\ref{lem:tube_estimate} applied to $\mathcal{E}_{2r}$ and
$r/2$ gives
\begin{equation}\label{eq:tube_applied}
    \bigl|\Bc_{\mathbb R^\Nb}(\mathcal E_{2r},\,r/2)\bigr|
    \leq
    \frac{4^\Nb}{c_\nu}\,\Bigl(\frac{r}{2}\Bigr)^{\Nb-s}\,
    \nu\bigl(\Bc_\Sigma(\mathcal E_{2r},\,r/2)\bigr)
    =
    \frac{2^{\Nb+s}}{c_\nu}\,r^{\Nb-s}\,
    \nu\bigl(\Bc_\Sigma(\mathcal E_{2r},\,r/2)\bigr).
\end{equation}
If $y\in \Bc_\Sigma(\mathcal E_{2r},r/2)$, there exists $z\in\mathcal E_{2r}$ such that 
\begin{equation*}
    \|y-z\|_\infty< \frac{r}{2}.
\end{equation*}
By Lemma~\ref{lem:continuity_d},
\begin{equation*}
    \delta_{\mu,\tau}(y)\leq\delta_{\mu,\tau}(z)+2\|y-z\|_\infty<2r+2\cdot \frac{r}{2}=3r,
\end{equation*}
that is, $y\in\mathcal E_{3r}$. Therefore, we conclude that $\Bc_\Sigma(\mathcal E_{2r},r/2) \subset \mathcal{E}_{3r}$. Hence, by 
\eqref{eq:Omega_tube} and \eqref{eq:tube_applied},
\begin{equation}\label{eq:level_set_estimate}
    |\Omega_r|\leq
    \frac{2^{\Nb+s}}{c_\nu}\,r^{\Nb-s}\,\nu(\mathcal E_{3r}).
\end{equation}
By the layer-cake formula, we know
\begin{equation*}
    \int_{\mathbb R^\Nb}\delta_{\mu,\tau}(x)^{-a}\mathrm{d}x
    = a\int_0^\infty t^{a-1}\left|\left\{x \in \mathbb{R}^{\Nb}: \; \frac{1}{\delta_{\mu,\tau}(x)}>t\right\}\right|\mathrm{d}t.
\end{equation*}
Therefore, changing variables $t=1/r$ and \eqref{eq:level_set_estimate} give
\begin{multline*}
    \int_{\mathbb R^\Nb}\delta_{\mu,\tau}(x)^{-a}\mathrm{d}x
    = a\int_0^\infty r^{-a-1}|\Omega_r|\mathrm{d}r
    \leq
    \frac{a2^{\Nb+s}}{c_\nu}
    \int_0^\infty r^{-a-1+\Nb-s} \nu(\mathcal E_{3r})\mathrm{d}r  \\
    =
    \frac{a2^{\Nb+s}}{c_\nu}
    \int_0^\infty r^{-s-2l\gamma-1} \nu(\mathcal E_{3r})\mathrm{d}r,
\end{multline*}
Further the change of variable $r'=3r$ yields
\begin{equation*}
    \int_{\mathbb R^\Nb}\delta_{\mu,\tau}(x)^{-a}  \mathrm{d}x
    \leq \frac{a2^{\Nb+s}3^{s+2l\gamma}}{c_\nu} \int_0^\infty (r')^{-s-2l\gamma-1}\nu(\mathcal E_{r'}) \mathrm{d}r'.
\end{equation*}
Applying the layer-cake formula on $(\Sigma, \nu)$ and arguing as above, we have
\begin{equation*}
    \int_\Sigma\delta_{\mu,\tau}(y)^{-(s+2l\gamma)}\mathrm{d}\nu(y) = (s+2l\gamma)\int_0^\infty (r')^{-s-2l\gamma-1}\nu(\mathcal 
    E_{r'})\mathrm{d}r',
\end{equation*}
we obtain
\begin{equation}\label{eq:full_to_surface_delta}
    \int_{\mathbb R^\Nb}\left(q^*_{\mu,\tau}(x)\right)^p \mathrm{d}x 
    =
    \int_{\mathbb R^\Nb}\delta_{\mu,\tau}(x)^{-a} \mathrm{d}x 
    \leq
    C_1\int_\Sigma\delta_{\mu,\tau}(y)^{-(s+2l\gamma)} \mathrm{d}\nu(y),
\end{equation}
where
\begin{equation}
    C_1=\frac{(\Nb+2l\gamma)\,2^{\Nb+s}\,3^{s+2l\gamma}}{c_\nu(s+2l\gamma)}.
\end{equation}

Set
\begin{equation*}
    \vartheta=\frac{s}{s-\Nb+2l}.
\end{equation*}
Let $d>0$, $x\in \Sigma$, and $\Eb\subset Q_d(x)$ be a Borel set. By Lemma \ref{l: measure_upper_bound_cap},
    \begin{equation*}
        \nu(\Eb)^{1 - \frac{1}{\vartheta}} \leq C_{\Nb,l,\mathcal{A}_\nu}  \operatorname{Cap}_l(\Eb).
    \end{equation*}
    Then, the H\"{o}lder inequality gives
    \begin{multline*}
        Z_\mu(\Eb)
        =
        \frac{\int_{\Eb} V(x) \mathrm{d}\nu(x)}{\operatorname{Cap}_l(\Eb)} 
        \leq
        \frac{\left(\int_{\Eb} V^{\vartheta}(x) \mathrm{d}\nu(x)\right)^{1/\vartheta} 
        \nu(\Eb)^{1-\frac{1}{\vartheta}}}{\operatorname{Cap}_l(\Eb)} \\
        \leq
        C_{\Nb,l,\mathcal{A}_\nu} \|V\|_{L^{\vartheta}(\Eb;\nu)}
        \leq
        C_{\Nb,l,\mathcal{A}_\nu} \|V\|_{L^{\vartheta}(Q_d(x);\nu)}.
    \end{multline*}
    Therefore, 
    \begin{multline}\label{eq:tilded_smaller_than_delta}
        \widetilde{d}(x)
        :=
        \sup\left\{d>0:
        C_{\Nb,l,\mathcal{A}_\nu} \|V\|_{L^{\vartheta}(Q_d(x);\nu)}
        <
        \frac{1}{\tau}
        \right\}\\
        \leq 
         \sup\left\{
        d>0:\;
        \sup_{\substack{\Eb\subset Q_d(x)}}
        Z_\mu(\Eb)
        <
        \frac{1}{\tau}
        \right\} = \delta_{\mu,\tau}(x).
    \end{multline}
Since $\widetilde d(x) \leq \delta_{\mu,\tau}(x) <\infty$, the definition of $\widetilde d(x)$ implies
that, for every $d>\widetilde d(x)$, 
\begin{equation*}
    C_{\Nb,l,\mathcal A_\nu}
    \left(\int_{Q_{d}(x)} V^\vartheta \mathrm{d}\nu\right)^{1/\vartheta}
    \geq
    \frac1\tau.
\end{equation*}
Hence
\begin{equation*}
    1
    \leq
    \left(\tau C_{\Nb,l,\mathcal A_\nu}\right)^\vartheta
    \int_{Q_{d}(x)} V^\vartheta \mathrm{d}\nu .
\end{equation*}
Dividing by $\nu(Q_{d}(x))$, we get
\begin{equation*}
    \nu(Q_{d}(x))^{-1}
    \leq
    \left(\tau C_{\Nb,l,\mathcal A_\nu}\right)^\vartheta
     \frac{1}{\nu(Q_{d}(x))}  \int_{Q_{d}(x)} V^\vartheta \mathrm{d}\nu.
\end{equation*}
    Introduce the maximal operator
    \begin{equation*}
        \Mt_\nu[f](x) = \sup_{d>0}\frac{1}{\nu(Q_d(x))}\int_{Q_d(x)}|f| \mathrm{d}\nu.
    \end{equation*}
    Therefore, we conclude that, for every $d>\widetilde d(x)$, 
    \begin{equation*}
        \nu(Q_{d}(x))^{-1}
        \leq
        \left(\tau C_{\Nb,l,\mathcal A_\nu}\right)^\vartheta
        \Mt_\nu[V^\vartheta](x).
\end{equation*}
By the upper Ahlfors bound,
\begin{equation*}
    d^{-s}
    \leq
    \frac{\mathcal A_\nu}{\nu(Q_{d}(x))}
    \leq
    \mathcal A_\nu
    \left(\tau C_{\Nb,l,\mathcal A_\nu}\right)^\vartheta
        \Mt_\nu[V^\vartheta](x),
        \qquad
        \text{for } d>\widetilde d(x).
\end{equation*}
Letting $d\downarrow \widetilde d(x)$, we obtain
\begin{equation*}
    \widetilde d(x)^{-s}
    \leq
    C_{\Nb,l,\mathcal A_\nu,\tau,s}
    \Mt_\nu[V^\vartheta](x).
\end{equation*}
Due to \eqref{eq:tilded_smaller_than_delta}, 
\begin{equation*}
    \delta_{\mu,\tau}(x)^{-s}
    \leq
    C_{\Nb,l,\mathcal A_\nu,\tau,s}
    \Mt_\nu[V^\vartheta](x).
\end{equation*}
Inserting this into \eqref{eq:full_to_surface_delta}, we obtain
\begin{equation*}
    \int_{\mathbb R^\Nb}\left(q^*_{\mu,\tau}(x)\right)^p \mathrm{d}x  
    \leq
    C_{\Nb,l,c_\nu,\mathcal{A}_\nu,\tau,s,\gamma} \int_\Sigma \left(\Mt_\nu[V^\vartheta](x)\right)^{\frac{s + 2l\gamma}{s}} 
    \mathrm{d}\nu(x).
\end{equation*}
Since $\gamma>0$, $(s + 2l\gamma)/s>1$. Due to the Ahlfors condition, $\nu$ is doubling. Therefore, by the
Hardy-Littlewood maximal theorem, 
\begin{equation*}
    \int_{\mathbb R^\Nb}\left(q^*_{\mu,\tau}(x)\right)^p \mathrm{d}x  
    \leq
    C_{\Nb,l,c_\nu,\mathcal{A}_\nu,\tau,s,\gamma} \int_\Sigma \left(V(x)\right)^{\vartheta \cdot \frac{s + 2l\gamma}{s}} 
    \mathrm{d}\nu(x).
\end{equation*}
Recalling the definitions of $\varkappa$, $\vartheta$, and $p$, we obtain \eqref{eq:qstar_surface_Lp_bound}.
\end{proof}

\begin{remark}
Proposition~\ref{prop:qstar_control_by_surface_Lp} shows that, as long as sharp
constants are not considered, Theorem~\ref{thm: LT_for_LO} recovers the
estimates of \cite{FrankLaptev}, \cite{Rozenblum2022}, and the classical
absolutely continuous case \cite{LiebThirring1976}. The converse implication
is false in general, as shown by Example~\ref{lem:RT_LT_infinite_ours_finite}.

Due to Lemma~\ref{lem:our_cond_weaker}, the class of admissible potentials in
our approach is larger than the class considered in \cite{Rozenblum2022}.
Moreover, for $s$-Ahlfors regular measures, Proposition~\ref{prop:qstar_control_by_surface_Lp}
shows that our upper bound is sharper in the following sense: finiteness of
the upper bound in \cite{Rozenblum2022} implies finiteness of our upper bound,
whereas the converse is false in general by Example~\ref{lem:RT_LT_infinite_ours_finite}. For measures satisfying only an upper 
Ahlfors condition, the two upper bounds
are not directly comparable, and in this generality the results should be
regarded as complementary.
\end{remark}

As an immediate consequence of the previous proposition, we reproduce the classical large-coupling version.

\begin{corollary}\label{lem:LT_classical_implies_ours_large_coupling}
Let $\tau>0$, let $\gamma>0$, and let $V\in L^{\frac{\Nb}{2l}+\gamma}(\mathbb{R}^\Nb)$ be a non-negative function. Define the measure 
$\mu = V\mathcal{L}$. Then, for every $\eta>0$,
\begin{equation*}
    \|q^*_{\eta\mu,\tau}\|_{L^{\frac{\Nb}{2l}+\gamma}(\mathbb R^\Nb)}
\le A_{\Nb,l,\tau,\gamma}\,\|\eta V\|_{L^{\frac{\Nb}{2l}+\gamma}(\mathbb R^\Nb)}
\end{equation*}
In particular, whenever the classical Lieb-Thirring bound is finite, the corresponding bound in terms of the Otelbaev function is 
finite as well.
\end{corollary}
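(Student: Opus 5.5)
The plan is to apply Proposition~\ref{prop:qstar_control_by_surface_Lp} with $\nu=\mathcal L$, $\Sigma=\mathbb R^\Nb$, $s=\Nb$, and density $\eta V$, then track exponents.

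\textbf{Applicability.} Lebesgue measure is $\Nb$-Ahlfors regular: $\mathcal L(Q_d(x))=d^\Nb$, so both \eqref{upper_Ahlfors_condition} and \eqref{lower_Ahlfors_condition} hold with constants $1$. Since $2l>0$, we have $s=\Nb>\Nb-2l$. With $s=\Nb$, the exponent in Condition~\ref{cond:nu_V} is
\[
\varkappa=\frac{\Nb+2l\gamma}{2l}=\frac{\Nb}{2l}+\gamma=:p,
\]
so the hypothesis $\eta V\in L^{\varkappa}(\mathbb R^\Nb;\mathcal L)$ is exactly $\eta V\in L^p$. By Lemma~\ref{lem:our_cond_weaker}, $\eta\mu=\eta V\mathcal L$ satisfies Condition~\ref{cond.cap}, so $q^*_{\eta\mu,\tau}$ is well defined.

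\textbf{The bound.} Proposition~\ref{prop:qstar_control_by_surface_Lp} applied to $\eta V$ gives
\[
\int_{\mathbb R^\Nb}\bigl(q^*_{\eta\mu,\tau}\bigr)^{p}\,\mathrm dx\le C\int_{\mathbb R^\Nb}(\eta V)^{p}\,\mathrm dx .
\]
The constant $C$ depends only on $\Nb,l,s,\gamma,\tau$ and the Ahlfors constants. Here the Ahlfors constants are absolute and $s=\Nb$, so $C=C(\Nb,l,\tau,\gamma)$ and does not depend on $\eta$ or $V$. Taking $p$-th roots gives the claimed norm inequality with $A_{\Nb,l,\tau,\gamma}=C^{1/p}$.

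\textbf{Main check.} The one step that needs care is confirming that $C$ is uniform in $\eta$. This holds because the Proposition's constant depends on $V$ only through $\int V^\varkappa\,\mathrm d\nu$. The $\eta$-dependence is then absorbed into the density itself, and no use of the coupling rescaling $q^*_{\eta\mu,\tau}=q^*_{\mu,\eta\tau}$ is needed.

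\textbf{Final statement.} When the classical LT integral $\int(\eta V)^{p}\,\mathrm dx$ is finite, the display above shows that $q^*_{\eta\mu,\tau}\in L^p$. Taking $\tau=\tau_2$, this gives finiteness of the right-hand side of \eqref{Otelbaev.LT}.
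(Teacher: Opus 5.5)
Your proposal is correct and follows the paper's proof: the paper also applies Proposition~\ref{prop:qstar_control_by_surface_Lp} with density $\eta V$ and $\nu=\mathcal L$, $s=\Nb$, in which case $\varkappa=\frac{\Nb}{2l}+\gamma$. Your checks of Ahlfors regularity, of the exponent identity, and of the fact that the constant does not depend on $\eta$ are details the paper leaves implicit.
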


\begin{proof}
Apply the previous lemma with the measure $\eta\mu=\eta V\mathcal{L}$.
\end{proof}

\begin{remark}
In the classical Lieb-Thirring inequality for a non-negative potential $V$,
the upper bound is expressed solely in terms of the norm
$\|V\|_{L^{\frac{\Nb}{2l}+\gamma}(\mathbb R^\Nb)}$.
Consequently, if the upper bound is finite for $\eta V$ for some $\eta>0$,
then it remains finite for all $\eta>0$.

The situation is different for the estimate in terms of the Otelbaev function $q^*_{\mu,\tau}$, which takes into account the 
spreading of the measure. As shown in Example~\ref{lem_unpr_big_coupl}, the finiteness of
$\|q^*_{\eta\mu,\tau}\|_{L^{\frac{\Nb}{2l}+\gamma}(\mathbb R^\Nb)}$
may depend essentially on the coupling parameter $\eta$.
In particular, there exist measures $\mu$ such that
\begin{equation*}
    q^*_{\mu,\tau}\in L^{\frac{\Nb}{2l}+\gamma}(\mathbb R^\Nb),
\qquad
q^*_{\eta\mu,\tau}\notin L^{\frac{\Nb}{2l}+\gamma}(\mathbb R^\Nb),
\end{equation*}
for sufficiently large $\eta$.

At the same time, $q^*_{\eta\mu,\tau} \in L^{\frac{\Nb}{2l}+\gamma}(\mathbb R^\Nb)$ implies $q^*_{\mu,\tau}\in 
L^{\frac{\Nb}{2l}+\gamma}(\mathbb R^\Nb)$. Thus, the dependence of $q^*_{\mu,\tau}$ on the coupling constant provides a
finer description of the spectral behaviour in the regime of large coupling.
\end{remark}


\end{document}